\newcommand\tabcaption{\def\@captype{table}\caption}
\newcommand\figcaption{\def\@captype{figure}\caption}
\pgfplotsset{compat=1.18}
\numberwithin{equation}{section}
\newcommand{\R}{\mathbb{R}}
\newtheorem{theorem}{Theorem}[section]
\newtheorem{lemma}{Lemma}[section]
\newtheorem{proposition}{Proposition}[section]
\newtheorem{corollary}{Corollary}[section]
\newtheorem{remark}{Remark}[section]
\newtheorem{example}{Example}[section]
\newtheorem{assumption}{Assumption}[section]
\begin{document}
\begin{frontmatter}

\title{Euler-type methods for L\'evy-driven McKean-Vlasov SDEs with super-linear coefficients: mean-square error analysis}
\author{Jingtao Zhu}
\ead{bingbll.zhu@gmail.com}

\author{Yuying Zhao\corref{cor1}}
\ead{zhaoyuying78@gmail.com}

\author{Siqing Gan\corref{cor1}}
\ead{sqgan@csu.edu.cn}

\cortext[cor1]{Corresponding authors}
\address{School of Mathematics and Statistics, HNP-LAMA, Central South University, Changsha 410083, China}

\begin{abstract} 
We develop and analyze a general class of Euler-type numerical schemes for L\'evy-driven McKean-Vlasov stochastic differential equations (SDEs), where the
drift, diffusion and jump coefficients grow super-linearly in the state variable.
These numerical schemes are derived by incorporating projections or nonlinear transformations into the classical Euler method, with the primary objective of establishing moment bounds for the numerical solutions.
This class of schemes includes the tanh-Euler, tamed-Euler and sine-Euler schemes as special cases. In contrast to existing approaches that rely on a coercivity condition (e.g., Assumption B-1 in Kumar et al., arXiv:2010.08585), the proposed schemes remove such a restrictive assumption. We provide a rigorous mean-square convergence analysis and establish that the proposed schemes achieve convergence rates arbitrarily close to
$\frac{1}{2}$ for the interacting particle systems associated with L\'evy-driven McKean-Vlasov SDEs.
Several numerical examples are presented to illustrate the convergence behavior and validate the theoretical results.
\end{abstract}

\begin{keyword}
L\'evy-driven McKean-Vlasov SDEs, interacting particle systems, super-linear growth, Euler-type methods, error analysis
\MSC[2020] 65C30 \sep 60H35 
\end{keyword}

\end{frontmatter}
\section{Introduction}
McKean-Vlasov SDEs have found widespread applications across diverse fields, including biology and chemistry (e.g., chemotactic interactions \cite{keller1970initiation}) and neuroscience (e.g., the Hodgkin-Huxley model \cite{baladron2012mean}). 
Traditionally, McKean-Vlasov SDEs have been modeled with Gaussian noise as the driving process. However, in many real-world systems, especially in finance, physics and biology, the noise is non-Gaussian, and the system may exhibit sudden jumps or discontinuities. To accurately capture such behaviors, it is essential to incorporate (non-Gaussian) L\'evy-type perturbations \cite{tankov2003financial,applebaum2009levy,duan2015introduction,liu2023large}.

Let $\big(\Omega, \mathcal{F}, \mathbb{P}\big)$ be a probability space, and let $\{\mathcal{F}_t\}_{0 \leq t \leq \mathcal{T}}$ ($\mathcal{T} > 0$) denote the filtration generated by both the $m$-dimensional Brownian motion $\{W(t)\}_{0 \leq t \leq \mathcal{T}}$ and the Poisson random measure $p_{\varphi}(dv, dt)$, where $\mathcal{F}_t$ is the $\sigma$-algebra that captures all information up to time $t$ from both the Brownian motion and the Poisson process.
Furthermore, let $p_{\varphi}(dv, ds)$ be an $\{\mathcal{F}_t\}_{0 \leq t \leq \mathcal{T}}$-adapted Poisson measure defined on a space $(\mathcal{E}, \mathfrak{E}, \varphi)$, where $\mathcal{E} \subseteq \mathbb{R}^d \setminus \{0\}$. The Poisson process $p_{\varphi}(dv, dt)$ is assumed to be independent of the Brownian motion ${W(t)}$. The compensated version of this measure is given by $\tilde{p}_{\varphi}(dv, dt) := p_{\varphi}(dv, dt) - \varphi(dv) dt$.
We now investigate the following L\'evy-driven McKean-Vlasov SDEs: 
\begin{equation} 
\label{eq:MVSDES-problem}
\begin{aligned}
X(t) =&X_0+\int_0^t f\left(s,X(s), \mathscr{L}(X(s))\right) d s+\int_0^t g\left(s,X(s), \mathscr{L}(X(s))\right) d W(s) \\
\quad \quad & + \int_0^t \int_{\mathcal{E}}h\left(s,X(s), \mathscr{L}(X(s)),v\right) \tilde{p}_{\varphi}(dv,ds),
\end{aligned}
\end{equation}
for $t \in [0,\mathcal{T}]$, almost surely, where $\{\mathscr{L}(X(t))\}_{t \geq 0}$ denotes the law of $X(t)$. Moreover, $f \colon \mathbb{M} \to \R^d$, $g \colon \mathbb{M}  \to \R^{d \times m}$ and $h \colon \mathbb{M} \times \mathcal{E} \to \R^{d} $ are measurable, where
$\mathbb{M}:=[0, \mathcal{T}] \times \R^d \times \mathscr P_2(\R^d)$, as defined in Table \ref{table:notations}.

When the measure flow $\{\mathscr{L}(X(t))\}_{t \ge 0}$ is determined, the coefficients $f$, $g$ and $h$ depend exclusively on time and state variables. Consequently, the McKean-Vlasov SDEs \eqref{eq:MVSDES-problem} reduce to classical SDEs if $(h=0)$ or to a jump-diffusion SDEs if $(h\ne 0)$. To the best of our knowledge, significant research efforts have been devoted to developing and rigorously analyzing convergence of numerical schemes for such SDEs (e.g.,  \cite{chen2019mean,dareiotis2016tamed,deng2019generalized,Platen10numerical,kohatsu2010jump,kumar2017explicit,wang2010compensated,kumar2021milstein,yang2024strong,lei2023first,wang2023mean} and references therein).

Most McKean-Vlasov SDEs rarely admit explicit closed-form solutions, making numerical methods a necessary alternative for their analysis and approximation. Discretizing the McKean-Vlasov SDE encounters additional challenges, as the coefficients are distribution-dependent and the distribution $\mathscr{L}(X(s))$ must be discretized as well. A prevalent approach for approximating McKean-Vlasov SDEs is the stochastic particle method \cite{bossy1997stochastic}, which employs a large interacting particle system (IPS) to simulate the McKean-Vlasov SDEs. This method is based on the propagation of chaos (POC) principle, utilizing the empirical distribution of particles as an approximation of the true distribution $\mathscr{L}(X(s))$. Therefore, to obtain a fully discretized numerical solution, an efficient scheme for discretizing the IPS is essential. 

\textbf{Literature review and research gap.} 
McKean-Vlasov SDEs whose coefficients exhibit linear growth, the Euler-Maruyama (EM) scheme and its associated analysis have been further developed in \cite{bao2022approximations, ding2021euler, li2023strong}. However, the coefficients of the majority of McKean-Vlasov SDEs do not satisfy the condition of linear growth. Recall that when the coefficients of an SDE without distribution-dependent terms, and do not satisfy the global Lipschitz condition and exhibit super-linear growth,  the EM method fails to converge to the exact solution over a finite time interval in both the mean-square sense and the numerical weak sense \cite{hutzenthaler2011strong, Matting02, kumar2021explicit}. A similar divergence phenomenon is observed in McKean-Vlasov SDEs, often referred to as "particle corruption", which has been extensively studied in \cite{dos2022simulation}.

Extensive research has been dedicated to developing numerical methods for McKean-Vlasov SDEs to address these challenges in cases where the coefficients fail to satisfy global Lipschitz continuity. Numerical schemes such as the tamed Euler method \cite{dos2022simulation,liu2023tamed,liu2023thetamed} have been developed for drift coefficients with super-linear growth in the state variable and linear dependence on the measure. When diffusion term also exhibits super-linear growth, specialized numerical schemes like the tamed EM method \cite{kumar2022well}, the adaptive Euler method \cite{reisinger2022adaptive}, the modified Euler method \cite{JIAN2025112284}, the truncated Euler method \cite{yuanping2024explicit,guo2024convergence} and the split-step method \cite{chen2022flexible,chen2024euler,chen2025wellposedness} have been developed, each achieving convergence under specific conditions. Comprehensive analyses of these approaches are detailed in \cite{dos2022simulation,liu2023tamed,liu2023thetamed,kumar2022well,reisinger2022adaptive,JIAN2025112284,yuanping2024explicit,guo2024convergence,chen2022flexible,chen2024euler,chen2025wellposedness} and references therein.

Notably, numerical methods for L\'evy-driven McKean-Vlasov SDEs particularly those addressing super-linear growth conditions remain a relatively unexplored research area. In \cite{biswas2020well}, the authors propose a tamed Euler method for the IPS associated with L\'evy-driven McKean-Vlasov SDEs, deriving convergence rates under the condition that the drift, diffusion, and jump coefficients grow superlinearly concerning the states. The authors of \cite{tran2024infinite} propose an alternative tamed-adaptive Euler method for the IPS associated with L\'evy-driven McKean-Vlasov SDEs, in which the drift and diffusion exhibit super-linear growth, whereas the jump term remains linear. Under these assumptions, strong convergence is achieved for both finite and infinite time horizons.

\textbf{Contributions.}
Building on existing work on specific numerical methods \cite{biswas2020well}, we develop a 
unified Euler-type framework incorporating nonlinear transformations for L\'evy-driven McKean-Vlasov SDEs with super-linearly growing coefficients. This framework systematically includes the tanh-Euler, tamed-Euler and sine-Euler schemes as special cases,
enabling a rigorous mean-square error analysis for this family of methods.

In a nutshell, the highlight achievements of this work are described as follows.
\begin{itemize}

\item \textit{Moment bound analysis.}
To address the combined challenges of superlinear growth coefficients, distributional dependence and the lack of coercivity, we introduce a new analytical technique for establishing moment bounds for the proposed Euler-type schemes \eqref{eq:Modified-Euler-method}. Unlike existing approaches (e.g., \cite{kumar2022well, biswas2020well}) that depend on strong coercivity assumptions (such as Assumption B-1 in \cite{kumar2022well}), our proposed methods remove this restrictive condition.
Instead, we adopt Assumptions \ref{ass:Gamma-control-conditions} and \ref{ass:Coefficient-comparison-conditions-of-f}, which ensure that the transformation operator $\Gamma$ is appropriately bounded and scales with a negative power of the time step $\Delta t$. These assumptions enable us to derive uniform moment estimates critical for convergence analysis.

\item \textit{Convergence analysis.} Theorem \ref{thm:con-result-particle-scheme} establishes the convergence rates of the proposed Euler-type schemes \eqref{eq:Modified-Euler-method} for IPS. Specifically, for IPS associated with L\'evy-driven McKean–Vlasov SDEs, the proposed schemes achieve convergence rates arbitrarily close to $\frac{1}{2}$, without imposing linear growth constraints on the equation coefficients. 
Through the POC, we extend these results to the L\'evy-driven McKean-Vlasov SDEs in Corollary \ref{cor:convergence-theorem}, demonstrating the consistency between discrete and continuous systems.

\item \textit{Unified numerical framework.}
The proposed Euler-type schemes \eqref{eq:Modified-Euler-method} based on nonlinear transformations provide a unified theoretical foundation for constructing explicit numerical schemes under superlinear growth conditions. 
Notably, our main theorem offers a direct convergence analysis for this family of schemes, including the tanh-Euler, tamed-Euler and sine-Euler schemes, thereby avoiding the need for separate case-by-case analyses.
\end{itemize}

We note that the $l$-th moments of Poisson increments such as
$\int_0^t \int_{\mathcal{E}} p_{\varphi}(dv,ds)$  contribute at most 
$O(\Delta t)$, 
contrasting with the $O(\Delta t^{l/2})$ ($l \in \mathbb{N}$) behavior of Wiener processes (see Lemma \ref{lem:estimate-of-difference-of-numerical-solution}), where $\Delta t$ is the stepsize.
When combined with superlinear jump terms, this leads to reduced second-moment 
contributions, explaining the observed convergence rate limitation below 
$\frac{1}{2}$ in general cases (Lemma \ref{lem:Difference-of-coefficients}). 
Notably, the optimal $\frac{1}{2}$ rate becomes attainable under enhanced smoothness conditions.

\textbf{Paper organization.}
Apart from the introduction, the appendix contains technical proofs and a detailed mean-square error analysis for the proposed Euler-type schemes \eqref{eq:Modified-Euler-method}. The main body of this work is organized as follows: In Section \ref{sec:settings}, we outline the assumptions on the coefficients of L\'evy-driven McKean-Vlasov SDEs and provide results on the POC. Section \ref{sec:Modified-Euler-approximation-and-moment-bound} focuses on the construction of the proposed Euler-type schemes, establishing their uniform moment boundedness and presenting their convergence result. 
Numerical examples are provided to confirm the
previous findings in Section \ref{sec:Numerical-Experiments}.

\noindent
\textbf{List of Notations and Definitions.}
\begin{table}[ht] 
\centering
\begin{tabular}{| c | p{13.5cm} | }
\hline
\textbf{Notation} & \textbf{Definition} \\ \hline
\(\langle \cdot, \cdot \rangle\) & Euclidean inner product on \(\mathbb{R}^d\) \\ \hline
\(| \cdot |\) & Euclidean norm on \(\mathbb{R}^d\) \\ \hline
\(|A|\) & Trace norm of a matrix \(A \in \mathbb{R}^{d \times m}\), defined as \(|A| := \sqrt{\operatorname{trace}(A^T A)}\) \\ \hline
\((\Omega, \mathcal{F}, \{\mathcal{F}_t\}_{t \in [0, \mathcal{T}]}, \mathbb{P})\) & A filtered probability space \\ \hline
\(\mathbb{E}\) & Expectation under the probability measure \(\mathbb{P}\) \\ \hline
\(\mathbb{M}\) & $[0, \mathcal{T}] \times \R^d \times \mathscr P_2(\R^d)$ 
\\ \hline
\(a_1 \vee b_1\) & Maximum of \(a_1\) and \(b_1\) \\ \hline
\(a_1 \wedge b_1\) & Minimum of \(a_1\) and \(b_1\) \\ \hline
\(\delta_y\) & Dirac measure at point \(y\) \\ \hline
\(\mathscr{P}(\mathbb{R}^d)\) & Space of probability distributions over \(\mathbb{R}^d\) \\ \hline
\(\mathscr{P}_q(\mathbb{R}^d)\) & Subspace of \(\mathscr{P}(\mathbb{R}^d)\), defined as \(\mathscr{P}_q(\mathbb{R}^d) := \left\{\rho \in \mathscr{P}(\mathbb{R}^d) : \int_{\mathbb{R}^d} |y|^q \rho(dy) < \infty \right\}, q \geq 1\) \\ \hline
\(\mathbb{W}_q(\rho_1, \rho_2)\) & \(L^q\)-Wasserstein distance between two probability measures \(\rho_1, \rho_2 \in \mathscr{P}_q(\mathbb{R}^d)\) \\ \hline
 \multirow{2}{*}{\(\mathbb{W}_2^2(\rho_1, \rho_2)\)} & \(L^2\)-Wasserstein distance for random variables \(X\) and \(Y\) with distributions \(\rho_1 = \mathscr{L}_X\) and \(\rho_2 = \mathscr{L}_Y\), \(\mathbb{W}_2^2(\rho_1, \rho_2) \leq \mathbb{E}[|X - Y|^2]\) \\ \hline
\(C\) & A generic positive constant that may take different values in different contexts \\ \hline
\end{tabular}
\caption{List of notations}
\label{table:notations}
\end{table}

\section{Assumptions and settings}
\label{sec:settings}
\subsection{Assumptions on coefficients of L\'evy-driven McKean-Vlasov SDEs}
We outline the coefficient conditions for the L\'evy-driven McKean-Vlasov SDEs \eqref{eq:MVSDES-problem}. In the following, $\bar{p} \geq 1$ denotes a fixed positive constant.
The following assumptions are imposed to hold uniformly in $0 \leq t \leq \mathcal{T}$, $ y, \bar{y} \in \mathbb{R}^d$ and $\rho, \bar{\rho} \in \mathscr{P}_2\left(\mathbb{R}^d\right)$.

\begin{assumption}
\label{ass:inital-val-con}
$\mathbb{E}[|X_0|^{2\bar{p}}] < \infty.$
\end{assumption}

\begin{assumption}  \label{ass:Coupled-mono-condi}
The functions $f$, $g$ and $h$ satisfy coupled monotonicity condition:
\begin{gather*}
\begin{aligned}
2\big\langle y-\bar{y},f(t,y,\rho)-f(t,\bar{y},\bar{\rho})\big\rangle + \left|g(t,y,\rho)-g(t,\bar{y},\bar{\rho})\right|^2+ & \int_\mathcal{E}\left|h(t,y,\rho, v)-h(t,\bar{y},\bar{\rho},v)\right|^2 \varphi(d v)  \\
& \leq C\left(|y-\bar{y}|^2+\mathbb{W}_2^2(\rho, \bar{\rho})\right),
\end{aligned}
\end{gather*}
for some constant $C>0$.
\end{assumption}

\begin{assumption} \label{ass:Coercivity-condition-Chen} 
The functions $f$, $g$ and $h$ satisfy the following coercivity condition:
\begin{gather*}
\label{eq:mon-condi-jump-sde}
\begin{aligned}
2\bar{p}|y|^{2\bar{p}-2}  & \big\langle y, f(t,y,\rho)\big\rangle + \bar{p}(2\bar{p}-1)|y|^{2\bar{p}-2}|g(t,y,\rho)|^2   \\
 & + (1+(2\bar{p}-2)\theta)\int_{\mathcal{E}}|h(t,y,\rho,v)|^{2\bar{p}} \varphi(d v) \leq C\left(1+|y|^{2\bar{p}}+\mathbb{W}_{2}^{2\bar{p}}(\rho,\delta_{0})\right),
\end{aligned}
\end{gather*}
for some constants $C,\theta>0$.
\end{assumption}

\begin{assumption} \label{ass:Continuity—Conditions-f} 
The function $f(t, y, \rho)$ is uniformly continuous in $y$.  
\end{assumption}

Under Assumptions \ref{ass:inital-val-con}-\ref{ass:Coercivity-condition-Chen} with $\bar{p}=1$, and \ref{ass:Continuity—Conditions-f}, the existence and uniqueness of the solution to \eqref{eq:MVSDES-problem} are established in \cite[Theorem 2.1]{biswas2020well}. Additionally, applying the $\rm It\hat{o}$ formula and Assumption \ref{ass:Coercivity-condition-Chen} with $\bar{p} \geq 1$, it suffices to show that the $p$-th moment of $X(t)$ is bounded. In particular, one can find a constant 
$C>0$ satisfying
\[\sup_{0 \leq t \leq \mathcal{T}} \mathbb{E}\left[|X(t)|^{2p}\right] \leq C\left(1+\mathbb{E}\left[|X_{0}|^{2\bar{p}}\right]\right), \quad p \in [1, \bar{p}].\] 

\begin{assumption} \label{ass:poly-growth-coeff-a}
The function $f$ exhibits polynomial growth:
\begin{align*}
\left|f(t,y,\rho)-f(t,\bar{y},\bar{\rho})\right| &\le C \big(\left(1+|y|^{\gamma }+|\bar{y}|^{\gamma }\right)\left|y-\bar{y}\right|+\mathbb{W}_{2}(\rho,\bar{\rho})\big),
\end{align*}
for some constants $C,\gamma>0$.
\end{assumption}

\begin{assumption}  \label{ass:Initial-value-is-bounded}
There exists $C>0$ such that
\begin{align*}
\left|f(t,0,\delta_{0})\right|^{2} \vee \left|g(t,0,\delta_{0})\right|^{2} \vee \int_{\mathcal{E}} \left|h(t,0,\delta_{0},v)\right|^{2}\varphi(dv) \le C.
\end{align*}
\end{assumption}

\begin{assumption} \label{ass:holder-continuous}
    The functions $f,g$ and $h$ satisfy the $ H\ddot{o}lder$ continuity in time:
    \begin{align*}
        \left|f(t,y,\rho)-f(s,y,\rho)\right|+ \left|g(t,y,\rho)-g(s,y,\rho)\right|+ \int_{\mathcal{E}}\left|h(t,y,\rho,v)-h(s,y,\rho,v)\right|\varphi(dv) \le  C \left|t-s\right|^{\frac{1}{2}}.
    \end{align*}
    for some constant $C>0$.
\end{assumption}
The Assumptions \ref{ass:poly-growth-coeff-a} and \ref{ass:Initial-value-is-bounded} ensure that
the polynomial growth 
\begin{align} 
    \left|f(t,y,\rho)\right| 
    &~\le C \left(\left(1+|y|^{\gamma}\right)|y|+\mathbb{W}_{2}(\rho,\delta_{0})\right)  +C \notag \\
    &~\le C\big(1+|y|^{\gamma+1}+\mathbb{W}_{2}(\rho,\delta_{0})\big).\label{ineq:growth-condition-of-a}
\end{align}

In addition, one can apply the Cauchy-Schwarz inequality, Young's inequality, Assumptions \ref{ass:Coupled-mono-condi} and \ref{ass:poly-growth-coeff-a} to show
\begin{align} 
    \left|g(t,y,\rho)-g(t,\bar{y},\bar{\rho})\right|^{2} 
    & \leq C\left(|y-\bar{y}|^2+\mathbb{W}_2^2(\rho, \bar{\rho})\right) + 2\left|y-\bar{y}\right| \left|f(t,y,\rho)-f(t,\bar{y},\bar{\rho})\right| \notag \\
    &\le C \left(\left(1+|y|^{\gamma }+|\bar{y}|^{\gamma }\right)\left|y-\bar{y}\right|^{2}+\mathbb{W}_{2}^{2}(\rho,\bar{\rho})\right).
    \label{ineq:Polynomial-Growth-of-b}   
\end{align}
This implies the polynomial growth
\begin{equation}
\label{eq:poly-growth-diffusion}
\left|g(t,y,\rho)\right| \leq C\big(1+|y|^{\frac{\gamma}{2}+1}+\mathbb{W}_{2}(\rho,\delta_{0})\big).
\end{equation}

In a similar way, from Assumptions \ref{ass:Coupled-mono-condi} and \ref{ass:poly-growth-coeff-a}, we derive
\begin{align}
\int_{\mathcal{E}}\left|h(t,y,\rho,v)-h(t,\bar{y},\bar{\rho},v)\right|^{2} \varphi(dv) &\le C \left(\big(1+|y|^{\gamma }+|\bar{y}|^{\gamma }\big)\left|y-\bar{y}\right|^{2}+\mathbb{W}_{2}^{2}(\rho,\bar{\rho})\right). \label{ineq:Polynomial-Growth-of-c}   
\end{align}

Further, due to Assumption \ref{ass:Coercivity-condition-Chen}, Cauchy-Schwarz's inequality, \eqref{ineq:growth-condition-of-a} and the Young inequality, one can derive
\begin{align*}
\int_{\mathcal{E}} \left|h(t,y,\rho,v)\right|^{2\bar{p}}\varphi(dv) 
&~\le C\left(1+|y|^{2\bar{p}}+\mathbb{W}_{2}^{2\bar{p}}(\rho,\delta_{0})\right) + C |y|^{2\bar{p}-1} \big(1+|y|^{\gamma+1}+\mathbb{W}_{2}(\rho,\delta_{0})\big) \\
&~\le C\left(1+|y|^{\gamma+2\bar{p}}+\mathbb{W}_{2}^{2\bar{p}}(\rho,\delta_{0})\right).
\end{align*}
Also, for $ 2\le q\le 2\bar{p}$, thanks to the $\rm H\ddot{o}lder$ inequality  and $\varphi(\mathcal{E})< \infty $, we obtain 
\begin{align}
\int_{\mathcal{E}} \left|h(t,y,\rho,v)\right|^{q}\varphi(dv) 
&~\le \left(\int_{\mathcal{E}}\left|h(t,y,\rho,v)\right|^{q\cdot \frac{2\bar{p}}{q}} \varphi(dv) \right)^{\frac{q}{2\bar{p}}} \left(\int_{\mathcal{E}}\varphi(dv)\right)^{\frac{2\bar{p}-q}{2\bar{p}}}\notag \\
&~ \le C \left(C\left(1+|y|^{\gamma+2\bar{p}}+\mathbb{W}_{2}^{2\bar{p}}(\rho,\delta_{0})\right)\right)^{\frac{q}{2\bar{p}}} \notag \\
&~\le C\left(1+|y|^{\gamma+q}+\mathbb{W}_{2}^{q}(\rho,\delta_{0})\right).
\label{ineq:growth-condition-of-c}    
\end{align}

It is important to note that the functions in the L\'evy-driven McKean-Vlasov SDE \eqref{eq:MVSDES-problem} meet the $\mathbb{W}_{2}$-Lipschitz condition with respect to the measure component. From this point onward, the aforementioned assumptions will be systematically enforced. Additional examples that satisfy these assumptions are provided in Examples \ref{exam:3/2-volatility-model} and \ref{exam:double-well-model} in Section \ref{sec:Numerical-Experiments}.

\subsection{Particle approximation for the L\'evy-driven McKean-Vlasov SDEs}
Concerning their distribution dependence, L\'evy-driven McKean-Vlasov SDEs require the approximation of the measure component $\mathscr{L}(X(t))$ for all $t \geq 0$, which is commonly achieved through a stochastic IPS. 

For any fixed $N\in \mathbb{N}$, Let $\{W^{i}(t)\}_{t\ge 0}$, $\{\tilde{p}^{i}_{\varphi}(dv,dt)\}$ and $X_{0}^{i}$, for $i\in \mathcal{I}_{N}:=\{1,\cdots,N\}$, represent $N$ independent copies of $\{W(t)\}_{t\ge 0}$, $\{\tilde{p}_{\varphi}(dv,dt)\}$ and  $X_{0}^{i}$, respectively. 
The IPS is described by the following system:

\begin{equation} \label{eq:interat-parti-system}
\begin{aligned}
X^{i,N}(t)= &X_0^{i,N}+\int_0^t f\left(s, X^{i,N}(s), \rho_{s}^{X,N}\right) d s + \int_0^t g\left(s, X^{i,N}(s), \rho_{s}^{X,N}\right) d W^{i}(s)    \\
& +\int_0^t \int_{\mathcal{E}}h\left(s,X^{i,N}(s), \rho_{s}^{X,N},v\right) \tilde{p}_{\varphi}^{i}(dv,ds), \quad a.s.
\end{aligned}
\end{equation}
where $
\rho_{s}^{X,N}(\cdot) = \frac{1}{N} \sum_{i=1}^{N} \delta_{X^{i,N}(s)}(\cdot).
$ represents the empirical measure of $N$ interacting particles.

In the IPS \eqref{eq:interat-parti-system}, the particle
$X^{i,N}(t)$ provides a precise approximation of
$X(t)$ in the L\'evy-driven McKean-Vlasov SDEs with \eqref{eq:MVSDES-problem} when
$N$ is sufficiently large. This behavior is referred to as the propagation of chaos (POC). As a consequence of distributional dependence, the
$N$-dimensional system \eqref{eq:interat-parti-system} serves as an essential bridge for developing numerical approximations of L\'evy-driven McKean-Vlasov SDEs \eqref{eq:MVSDES-problem}. To characterize the POC, we now present the following system of non-interacting particles (NIPS):
\begin{equation} \label{eq:non-interat-parti-system}
\begin{aligned}
X^{i}(t)=&X_0^{i}+\int_0^t f\left(s,X^{i}(s), \mathscr{L}(X^{i}(s))\right) d s +\int_0^t g\left(s,X^{i}(s), \mathscr{L}(X^{i}(s))\right) d W^{i}(s) \\
&+\int_0^t \int_{\mathcal{E}}h\left(s, X^{i}(s), \mathscr{L}(X^{i}(s)),v\right) \tilde{p}_{\varphi}^{i}(dv,ds), \quad a.s.
\end{aligned}
\end{equation} 
If the  McKean-Vlasov SDEs with L\'evy noise \eqref{eq:MVSDES-problem} can be solved uniquely in the strong sense, then for $i \in \mathcal{I}_{N} $, the time-marginal distributions of $\{\mathscr{L}(X(t))\}_{t\ge 0}$ satisfy  
$\mathscr{L}(X(t)) = \mathscr{L}(X^{i}(t)).$

As demonstrated in \cite[Proposition 3.1]{biswas2020well}, the POC is ensured under Assumptions \ref{ass:inital-val-con}–\ref{ass:Continuity—Conditions-f}.
\begin{proposition}
\label{Pro:propa-of-chaos}
(POC, \cite[Proposition 3.1]{biswas2020well}) Let Assumptions \ref{ass:inital-val-con}-\ref{ass:Continuity—Conditions-f} be satisfied with $\bar{p}>2$. Then for some constant $ C > 0$ that does not depend on $d$ and $N$, we have the following estimation for arbitrary $N \in \mathbb{N}$,
$$
\sup _{i\in \mathcal{I}_{N}} \sup _{t \in[0, \mathcal{T}]} \mathbb{E}\left[\left|X^i(t)-X^{i, N}(t)\right|^2 \right] \leq C \begin{cases}N^{-1 / 2}, & \text { if } d<4, \\ N^{-1 / 2} \ln (N), & \text { if } d=4, \\ N^{-2 / d}, & \text { if } d>4. \end{cases}
$$
\end{proposition}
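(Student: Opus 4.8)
The statement to prove is Proposition~\ref{Pro:propa-of-chaos}, the propagation of chaos estimate, which the excerpt attributes to \cite[Proposition 3.1]{biswas2020well}. Since this is cited from the literature, the natural ``proof'' is a proof sketch following the standard synchronous coupling argument; I outline how I would carry it out.

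\textbf{Setup via synchronous coupling.} The plan is to couple the interacting system \eqref{eq:interat-parti-system} with the non-interacting system \eqref{eq:non-interat-parti-system} by using \emph{the same} Brownian motions $W^i$, Poisson measures $\tilde p_\varphi^i$ and initial data $X_0^i$ for both. Fix $i \in \mathcal{I}_N$ and set $e^i(t) := X^i(t) - X^{i,N}(t)$. Subtracting the two integral equations, $e^i(t)$ solves an SDE with zero initial condition whose coefficients are the differences $f(s,X^i(s),\mathscr{L}(X^i(s))) - f(s,X^{i,N}(s),\rho_s^{X,N})$, and similarly for $g$ and $h$. First I would apply the It\^o formula to $|e^i(t)|^2$, take expectations (the stochastic integrals are martingales by the moment bounds already recorded after Assumption~\ref{ass:Coercivity-condition-Chen}, valid here since $\bar p > 2$), and use the coupled monotonicity Assumption~\ref{ass:Coupled-mono-condi} to bound the drift/diffusion/jump contributions. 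The key algebraic point is to split each Wasserstein term via the triangle inequality:
\[
\mathbb{W}_2^2\big(\mathscr{L}(X^i(s)), \rho_s^{X,N}\big) \le 2\,\mathbb{W}_2^2\big(\mathscr{L}(X^i(s)), \rho_s^{X}\big) + 2\,\mathbb{W}_2^2\big(\rho_s^{X}, \rho_s^{X,N}\big),
\]
where $\rho_s^{X}(\cdot) := \frac1N\sum_{j=1}^N \delta_{X^j(s)}(\cdot)$ is the empirical measure of the \emph{non-interacting} particles. The second term is controlled pathwise by $\frac1N\sum_j |e^j(s)|^2$, while the first term is the ``true'' chaos error: the empirical measure of i.i.d.\ samples $X^j(s)$ against their common law $\mathscr{L}(X(s))$.

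\textbf{Gronwall and symmetrization.} After these manipulations I would arrive at
\[
\mathbb{E}\big[|e^i(t)|^2\big] \le C\int_0^t \mathbb{E}\big[|e^i(s)|^2\big]\,ds + C\int_0^t \frac1N\sum_{j=1}^N \mathbb{E}\big[|e^j(s)|^2\big]\,ds + C\int_0^t \mathbb{E}\big[\mathbb{W}_2^2(\rho_s^X, \mathscr{L}(X(s)))\big]\,ds.
\]
By exchangeability of the $N$ pairs $(X^i, X^{i,N})$ the quantity $\mathbb{E}[|e^i(s)|^2]$ is independent of $i$, so averaging over $i$ collapses the first two terms into a single $C\int_0^t \mathbb{E}[|e^1(s)|^2]\,ds$. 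Then Gronwall's inequality yields
\[
\sup_{i \in \mathcal{I}_N}\sup_{t \in [0,\mathcal{T}]} \mathbb{E}\big[|e^i(t)|^2\big] \le C \sup_{s \in [0,\mathcal{T}]} \mathbb{E}\big[\mathbb{W}_2^2\big(\rho_s^X, \mathscr{L}(X(s))\big)\big].
\]

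\textbf{The quantitative Wasserstein rate --- the main obstacle.} The remaining and genuinely hard step is bounding $\sup_s \mathbb{E}[\mathbb{W}_2^2(\rho_s^X, \mathscr{L}(X(s)))]$, i.e.\ the $L^2$-Wasserstein convergence rate of the empirical measure of $N$ i.i.d.\ samples to their law. This is where the dimension-dependent rates $N^{-1/2}$ ($d<4$), $N^{-1/2}\ln N$ ($d=4$), $N^{-2/d}$ ($d>4$) enter, and it is not elementary: it requires the Fournier--Guillin estimates on the rate of convergence of empirical measures in Wasserstein distance, which need a uniform higher-moment bound $\sup_s \mathbb{E}[|X(s)|^{\bar p}]$ with $\bar p > 2$ (hence the hypothesis $\bar p > 2$). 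I would invoke that result as a black box, checking only that the moment bound recorded after Assumption~\ref{ass:Coercivity-condition-Chen} supplies the required integrability uniformly in $s \in [0,\mathcal{T}]$. One should also verify that the constant $C$ produced by the Fournier--Guillin bound and by the Gronwall step depends only on $\mathcal{T}$, the constants in the assumptions, and $\bar p$ --- but not on $N$ or (for the coupling part) on $d$ --- which is exactly the claimed uniformity. The only subtlety specific to the L\'evy setting is ensuring the jump integrals in the It\^o expansion of $|e^i(t)|^2$ are handled with the compensated measure so that the $\varphi$-integral term in Assumption~\ref{ass:Coupled-mono-condi} appears with the correct sign; this is routine given \eqref{ineq:Polynomial-Growth-of-c}.
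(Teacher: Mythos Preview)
The paper does not give its own proof of this proposition: it simply cites \cite[Proposition 3.1]{biswas2020well} and moves on. Your sketch is the standard synchronous-coupling argument combined with the Fournier--Guillin empirical-measure rates, which is exactly the route taken in the cited reference, and it is correct as outlined. One small correction: the Fournier--Guillin constant does in fact depend on the dimension $d$ (and on the moment exponent), so the paper's phrase ``does not depend on $d$'' should be read as the $d$-dependence being absorbed into the displayed rates $N^{-1/2}$, $N^{-1/2}\ln N$, $N^{-2/d}$ rather than into $C$ --- your parenthetical ``(for the coupling part)'' already hints at this distinction.
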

 
\section{Numerical schemes for the IPS associated with L\'evy-driven McKean-Vlasov SDEs} \label{sec:Modified-Euler-approximation-and-moment-bound}
We first partition the time interval $[0,\mathcal{T}]$ uniformly with step size $\Delta t = \frac{\mathcal{T}}{n}$, defining grid points $t_{k}=k\Delta t$ for $k=0,1,\cdots,n$.
For each particle $i\in \mathcal{I}_{N}$, the family of Euler-type schemes approximating the IPS \eqref{eq:interat-parti-system} is given by:
\begin{equation}  \label{eq:Modified-Euler-method}
\begin{aligned}
    Y_{t_{k+1}}^{i,N} = & Y_{t_{k}}^{i,N} + \Gamma_{1}\left(f\left(t_{k}, Y_{t_{k}}^{i,N},\rho_{t_{k}}^{Y,N}\right),\Delta t\right) \Delta t + \sum_{j=1}^{m} \Gamma_{2}\left(g_{j}\left(t_{k},Y_{t_{k}}^{i,N},\rho_{t_{k}}^{Y,N}\right),\Delta t \right) \Delta{W_{j}^{i}}(t_{k}) \\
    & + \int_{t_{k}}^{t_{k+1}}\int_{\mathcal{E}}\Gamma_{3}\left(h\left(t_{k},Y_{t_{k}}^{i,N},\rho_{t_{k}}^{Y,N},v\right),\Delta t \right)\tilde{p}_{\varphi}^{i}(dv,ds),    
\end{aligned}   
\end{equation}
 where the transformation operators $\Gamma_l(\cdot)$ $(l = 1,2,3)$ are approximations of '$\cdot$', $g_j  \colon \mathbb{M} \to \R^{d \times 1} $ is a vector function with dimension $d \times 1$, $\Delta W_{j}^{i}(t_{k}) = W_{j}^{i}(t_{k+1}) - W_{j}^{i}(t_{k})$ and $Y_{0}^{i,N}=X_{0}^{i}$.

\subsection{%
  \texorpdfstring{Assumptions for moment boundedness in the Euler-type schemes \eqref{eq:Modified-Euler-method}}
  {Assumptions for moment boundedness in the Euler-type schemes}%
}

\label{subsec:modified-Euler-schemes}
The definitions presented below serve to highlight the dependence of the scheme \eqref{eq:Modified-Euler-method} on the equation coefficients $f,g,h$ and the step size $\Delta t$. For any $0 \leq t \leq \mathcal{T}$, $ y, \bar{y} \in \mathbb{R}^d$, $\rho, \bar{\rho} \in \mathscr{P}_2\left(\mathbb{R}^d\right)$ and $v \in \mathcal{E}$, define $\bm{Y}:=(t,y,\rho),~\bm{\bar{Y}}:=(t,y,\rho,v)$. Let $F_1^{0}(\bm{Y})$, $F_2^{j}(\bm{Y})(j=1,\cdots,m)$ and $F_{3}^{m+1}(\bm{\bar{Y}})$ denote $f(\bm{Y})$, $g_{j}(\bm{Y})(j=1,\cdots,m)$ and $h(\bm{\bar{Y}})$, respectively.
Motivated by \cite{Zhang2017preserving,zhao2024one,JIAN2025112284}, we obtain the moment boundedness of Euler-type schemes \eqref{eq:Modified-Euler-method} under specific assumptions on the operators $\Gamma_{l}$, where $l=1,2,3$.
\begin{assumption} \label{ass:Gamma-control-conditions}
There exist $C, \alpha_{l}>0 ~(l=1,2,3)$ such that
\begin{align*}
\big|\Gamma_{l}\big(F_l^{\mathfrak{i}}(\cdot),\Delta t\big)\big| &\le C\Delta t^{-\alpha_{l}} \wedge |F_l^{\mathfrak{i}}(\cdot)|, \quad \mathfrak{i} =0,\cdots,m+1.
\end{align*} 
\end{assumption}

\begin{assumption} \label{ass:Coefficient-comparison-conditions-of-f}
There exist $ C, \hat{\delta},\hat{\gamma}>0 $ such that
\begin{align*}
\left|\Gamma_{1}(F_1^{0}(\bm{Y}),\Delta t)-F_1^{0}(\bm{Y})\right| \le & C \Delta t ^{\hat{\delta}} \left|F_1^{0}(\bm{Y})\right|^{\hat{\gamma}}.
\end{align*}
\end{assumption}
Assumptions \ref{ass:Gamma-control-conditions} and \ref{ass:Coefficient-comparison-conditions-of-f} provide fundamental conditions that control the mappings, ensuring boundedness in numerical schemes. Specifically, Assumption \ref{ass:Gamma-control-conditions} imposes linear growth constraints on these mappings and restricts their values using a negative power of the step size $\Delta t$. These restrictions are essential to prevent moment explosion, particularly when the drift, diffusion and jump coefficients exhibit polynomial growth. For instance, with the step size $\Delta t = 0.1$, the bounds $\big|\Gamma_{l}\big(F_l^{\mathfrak{i}}(\cdot),\Delta t\big)\big| \le C 10^{\alpha_{l}} \wedge |F_l^{\mathfrak{i}}(\cdot)|, l=1,2,3, \mathfrak{i}=0,\cdots, m+1,$ for some $C>0$, depend on specific mapping choices. 

In Table \ref{table:some-operators-list}, we provide different choices for the constraint operator $\Gamma_l(\cdot)$ $(l = 1,2,3)$ used in Euler-type schemes described in equation \eqref{eq:Modified-Euler-method}. Detailed specifications for the parameters involved in each operator are provided in section \ref{sec:examples}.

\begin{table}[htb]
\centering
\scalebox{1}{
\begin{tabular}{|c|c|} \hline 
\textbf{Example} & \textbf{Nonlinear transform operator} $\Gamma_l(z)$ $(l = 1,2,3)$ for Euler-type schemes \\ \hline  
Example \ref{ex:tanh} & $\Delta t^{-1} \tanh(\Delta t  z)$ \\ \hline
Example \ref{ex:tame} & $\frac{\mathrm{z}}{1 + \Delta t  |z|}$  \\ \hline
Example \ref{ex:sine} & $\Delta t^{-1} \sin(\Delta t  z)$  \\ \hline
\end{tabular}}
\caption{Choices for the nonlinear transform operator  \(\Gamma_l(\cdot)\) used in the Euler-type schemes \eqref{eq:Modified-Euler-method}.}
\label{table:some-operators-list}
\end{table}

\begin{remark}
Existing numerical methods for the McKean-Vlasov SDEs, such as those in \cite{kumar2022well,biswas2020well,biswas2025milsteintypeschemesmckeanvlasovsdes}, and the truncation approach in \cite{guo2024convergence}, ensure moment boundedness under alternative coercivity conditions. For example, there exist $C,\theta >0$ such that, 
\begin{gather}
\label{eq:mon-condi-jump-numerical-method}
\begin{aligned}
2\bar{p}|y|^{2\bar{p}-2}  & \big\langle y, \Gamma_{1}\big(f(t,y,\rho), \Delta t\big) \big\rangle +\sum_{j=1}^{m} \bar{p}(2\bar{p}-1)|y|^{2\bar{p}-2}  \Big|\Gamma_{2}\left(g_{j}\left(t,y,\rho\right),\Delta t \right) \Big|^2 \\
 & + (1+(2\bar{p}-2)\theta)\int_{\mathcal{E}} \Big|\Gamma_{3}\left(h\left(t,y,\rho,v\right),\Delta t \right)\Big|^{2\bar{p}} \varphi(d v) \leq C\left(1+|y|^{2\bar{p}}+\mathbb{W}_{2}^{2\bar{p}}(\rho,\delta_{0})\right).
\end{aligned}
\end{gather}
\end{remark}

Establishing the mean-square convergence rate of the proposed Euler-type schemes in equation \eqref{eq:Modified-Euler-method} relies on the boundedness of moments. 

\begin{lemma} \label{lem:moment-bounds-of-numerical-solution}
Let Assumptions \ref{ass:inital-val-con}, \ref{ass:Coercivity-condition-Chen}, \ref{ass:poly-growth-coeff-a}-\ref{ass:holder-continuous} and Assumptions  \ref{ass:Gamma-control-conditions}-\ref{ass:Coefficient-comparison-conditions-of-f}  be fulfilled. Then, for all $k=0,1,\cdots,n$, there exist $C,\beta>0$, such that 
\begin{align} \label{Moments-numer-bound-stro-con}
\sup_{i\in\mathcal{I}_{N}} \mathbb{E}\left[|Y_{t_{k}}^{i,N}|^{2p}\right] \leq C\left(1+\left(\mathbb{E}\left[\left|X_{0}^{i}\right|^{2\bar{p}}\right]\right)^{\beta}\right), \quad p\in \left[1,\frac{2\bar{p}-G}{2+2\bar{\alpha} G}\right],
\end{align}
where $G:=G\left(\gamma,\hat{\gamma},\hat{\delta} \right)=\frac{\hat{\gamma}(\gamma+1)-1}{\hat{\delta}} \vee 3\gamma $ and $\bar{\alpha} = \alpha_{1} \vee \left(\alpha_{2}+\frac{1}{2}\right) \vee \left(\alpha_{3}+1\right)$. Here $\gamma, \alpha_{1},\alpha_{2},\alpha_{3},\hat{\gamma},\hat{\delta}$ are defined by Assumptions \ref{ass:poly-growth-coeff-a}, \ref{ass:Gamma-control-conditions} and \ref{ass:Coefficient-comparison-conditions-of-f}, and $\bar{p}\ge 1+(\bar{\alpha}+\frac{1}{2})G(\gamma,\hat{\gamma},\hat{\delta})$.
\end{lemma}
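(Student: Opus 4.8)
The plan is to establish the moment bound by induction on $k$, controlling the growth of $\mathbb{E}[|Y_{t_{k+1}}^{i,N}|^{2p}]$ in terms of $\mathbb{E}[|Y_{t_k}^{i,N}|^{2p}]$ via a one-step estimate that, after summation, yields a discrete Gronwall inequality. First I would expand $|Y_{t_{k+1}}^{i,N}|^{2p}$ using the scheme \eqref{eq:Modified-Euler-method}, writing $Y_{t_{k+1}}^{i,N} = Y_{t_k}^{i,N} + \Delta_k$ where $\Delta_k$ collects the drift increment $\Gamma_1(f(\cdot),\Delta t)\Delta t$, the diffusion increment $\sum_j \Gamma_2(g_j(\cdot),\Delta t)\Delta W_j^i(t_k)$, and the jump increment $\int_{t_k}^{t_{k+1}}\int_{\mathcal E}\Gamma_3(h(\cdot),\Delta t)\tilde p_\varphi^i(dv,ds)$. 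Using the elementary inequality $|a+b|^{2p} \le |a|^{2p} + 2p|a|^{2p-2}\langle a,b\rangle + C(|a|^{2p-2}|b|^2 + |b|^{2p})$ and then taking conditional expectation given $\mathcal F_{t_k}$, the martingale parts of the cross term $2p|Y_{t_k}^{i,N}|^{2p-2}\langle Y_{t_k}^{i,N},\Delta_k\rangle$ vanish, leaving a drift contribution of order $\Delta t$ and a ''diagonal'' quadratic-variation contribution. The key is that Assumption \ref{ass:Coefficient-comparison-conditions-of-f} lets me replace $\Gamma_1(f,\Delta t)$ by $f$ up to an error $C\Delta t^{\hat\delta}|f|^{\hat\gamma}$, so that the combination
\[
2p|y|^{2p-2}\langle y, \Gamma_1(f,\Delta t)\rangle + \sum_j p(2p-1)|y|^{2p-2}|\Gamma_2(g_j,\Delta t)|^2 + (1+(2p-2)\theta)\int_{\mathcal E}|\Gamma_3(h,\Delta t)|^{2p}\varphi(dv)
\]
can be compared to the coercive quantity in Assumption \ref{ass:Coercivity-condition-Chen} (using $|\Gamma_2(g_j,\cdot)|\le|g_j|$, $|\Gamma_3(h,\cdot)|\le|h|$ from Assumption \ref{ass:Gamma-control-conditions}), yielding a bound $C(1+|y|^{2p}+\mathbb W_2^{2p}(\rho,\delta_0))$ plus a ''bad'' remainder term involving $\Delta t^{\hat\delta}|f|^{\hat\gamma}$ and cross terms with the Poisson second moment.

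Next I would handle the genuinely new error terms. Because the jump increment contributes $O(\Delta t)$ to the second moment (not $O(\Delta t)$ from a quadratic variation but literally $\int_{t_k}^{t_{k+1}}\int_{\mathcal E}|\Gamma_3(h,\Delta t)|^2\varphi(dv)\,ds$, plus the higher-moment pieces from the compensated-Poisson moment formula), and because $\Gamma_3(h,\Delta t)$ is bounded by $C\Delta t^{-\alpha_3}$ as well as by $|h|$, I can interpolate: a factor $|\Gamma_3(h,\Delta t)|^{2p}\le (C\Delta t^{-\alpha_3})^{2p-2}|h|^2$ or similar, trading the superlinear growth of $h$ against negative powers of $\Delta t$. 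This is precisely where the exponent $\bar\alpha = \alpha_1\vee(\alpha_2+\tfrac12)\vee(\alpha_3+1)$ and the function $G(\gamma,\hat\gamma,\hat\delta)$ enter: one needs $\Delta t^{-\bar\alpha G}$ times the ''extra'' power of $|y|$ (at most $\gamma$ from each superlinear coefficient, $\hat\gamma(\gamma+1)-1$ from the $\Gamma_1$-mismatch term after dividing by $\Delta t^{\hat\delta}$) to still be dominated, after using Young's inequality to split off $|y|^{2p}$, by $C\Delta t\,(1+\mathbb{E}[|Y_{t_k}^{i,N}|^{2p}])$ uniformly. The condition $p\le \frac{2\bar p - G}{2+2\bar\alpha G}$ is exactly the arithmetic constraint that makes every stray power of $\Delta t^{-1}$ absorbable; I would also invoke $\bar p\ge 1+(\bar\alpha+\tfrac12)G$ to guarantee this interval is nonempty and contains $1$. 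The empirical-measure term $\mathbb W_2^{2p}(\rho_{t_k}^{Y,N},\delta_0) = \frac1N\sum_i|Y_{t_k}^{i,N}|^{2p}$ is controlled by the same supremum over $i$, so taking $\sup_{i\in\mathcal I_N}$ at the end closes the recursion.

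Having assembled $\sup_i\mathbb{E}[|Y_{t_{k+1}}^{i,N}|^{2p}] \le (1+C\Delta t)\sup_i\mathbb{E}[|Y_{t_k}^{i,N}|^{2p}] + C\Delta t$, the discrete Gronwall lemma gives $\sup_i\mathbb{E}[|Y_{t_k}^{i,N}|^{2p}]\le e^{C\mathcal T}(1+\sup_i\mathbb{E}[|Y_0^{i,N}|^{2p}])$, and since $Y_0^{i,N}=X_0^i$ with $\mathbb{E}[|X_0^i|^{2\bar p}]<\infty$ (Assumption \ref{ass:inital-val-con}) and $p\le\bar p$, Jensen gives the stated bound with an appropriate $\beta\ge 1$ (the power $\beta$ appears because intermediate Young-inequality steps may raise $\mathbb{E}[|X_0|^{2\bar p}]$ to a power). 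The main obstacle I anticipate is the bookkeeping in the second step: correctly tracking, through the moment expansion and the compensated-Poisson moment identities, every term of the form $\Delta t^{a}\,\mathbb{E}[|Y_{t_k}^{i,N}|^{2p+b}]$ with $a>0$, $b>0$, and verifying that the pair $(\alpha$-power, excess $|y|$-power$)$ in each such term satisfies the single scalar inequality encoded by $p\le\frac{2\bar p-G}{2+2\bar\alpha G}$ — in particular checking the two branches $\frac{\hat\gamma(\gamma+1)-1}{\hat\delta}$ (from Assumption \ref{ass:Coefficient-comparison-conditions-of-f}) and $3\gamma$ (from the triple products of superlinear drift/diffusion/jump terms in the $|b|^{2p}$ and $|a|^{2p-2}|b|^2$ expansions) separately, since $G$ is their maximum.
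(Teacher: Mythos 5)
Your plan has the right opening moves (expand $|Y_{t_{k+1}}^{i,N}|^{2\bar p}$ around $Y_{t_k}^{i,N}$, drop the martingale cross terms, compare the leading-order combination against Assumption~\ref{ass:Coercivity-condition-Chen} using $|\Gamma_l|\le|F_l|$, isolate the $\Gamma_1$-vs-$f$ mismatch via Assumption~\ref{ass:Coefficient-comparison-conditions-of-f}), but it then runs into a gap that is not a bookkeeping issue: without a truncation, the one-step recursion does not close.

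The obstruction is concrete. After the coercivity comparison, the mismatch term produces something of the form $\Delta t^{1+\hat\delta}\,\mathbb{E}\bigl[|Y_{t_k}^{i,N}|^{2\bar p-1+\hat\gamma(\gamma+1)}\bigr]$, and the $\kappa$-th order pieces of the expansion produce $\Delta t^{\kappa}\,\mathbb{E}\bigl[|Y_{t_k}^{i,N}|^{2\bar p+\gamma\kappa}\bigr]$ and $\Delta t^{\kappa/2}\,\mathbb{E}\bigl[|Y_{t_k}^{i,N}|^{2\bar p+\gamma\kappa/2}\bigr]$. All of these involve moments strictly higher than the one you are trying to bound. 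Your proposal to handle these by ``Young's inequality to split off $|y|^{2p}$'' cannot work: Young's inequality applied pointwise to $\Delta t^{a}|y|^{2p+b}$ with $b>0$ necessarily produces at least one term with a $|y|$-exponent $\ge 2p+b$, so the recursion in the $2p$-th moment alone does not close. Your alternative, interpolating $|\Gamma_3(h)|^{2p}\le(C\Delta t^{-\alpha_3})^{2p-2}|h|^2$, does remove the high $|y|$-power but leaves a factor $\Delta t^{-\alpha_3(2p-2)}$ that nothing compensates for; summing $n=\mathcal{T}/\Delta t$ one-step errors still diverges as $\Delta t\to 0$. In short, the a priori control available (Assumption~\ref{ass:Gamma-control-conditions} gives only $\mathbb{E}[|Y_{t_k}^{i,N}|^{2\bar p}]\lesssim \Delta t^{-2\bar p\bar\alpha}$, which is not uniform in $\Delta t$) is insufficient to run a direct Gronwall argument.

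The paper closes this gap with a localization you have not built in. One introduces the nested events $\Omega^{i}_{\mathfrak{R},k}:=\{|Y_{t_{j}}^{i,N}|\le\mathfrak{R},\ j=0,\dots,k\}$ with the $\Delta t$-dependent threshold $\mathfrak{R}(\Delta t)=\Delta t^{-1/G}$. On these events the higher powers of $|Y_{t_k}^{i,N}|$ are bought back by positive powers of $\Delta t$ precisely because of how $G=\frac{\hat\gamma(\gamma+1)-1}{\hat\delta}\vee 3\gamma$ is tuned, e.g.
\[
\mathds{1}_{\Omega^{i}_{\mathfrak{R},k}}|Y_{t_k}^{i,N}|^{2\bar p-1+\hat\gamma(\gamma+1)}\Delta t^{1+\hat\delta}
=\mathds{1}_{\Omega^{i}_{\mathfrak{R},k}}|Y_{t_k}^{i,N}|^{2\bar p}\Delta t\cdot\bigl(|Y_{t_k}^{i,N}|^{\hat\gamma(\gamma+1)-1}\Delta t^{\hat\delta}\bigr)
\le C\,\mathds{1}_{\Omega^{i}_{\mathfrak{R},k}}|Y_{t_k}^{i,N}|^{2\bar p}\Delta t,
\]
and similarly for the $\kappa$-terms; this is what replaces the Young step you envisioned. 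Gronwall then gives $\sup_i\mathbb{E}[\mathds{1}_{\Omega^{i}_{\mathfrak{R},k}}|Y_{t_k}^{i,N}|^{2\bar p}]\le C(1+\mathbb{E}[|Y_0^{i,N}|^{2\bar p}])$. The complement is handled by writing $\mathds{1}_{\Omega^{i,c}_{\mathfrak{R},k}}=\sum_{j}\mathds{1}_{\Omega^{i}_{\mathfrak{R},j-1}}\mathds{1}_{|Y_{t_j}^{i,N}|>\mathfrak{R}}$, applying H\"older with exponents $(p',q')$ and Chebyshev, and invoking the crude $\Delta t^{-2\bar p\bar\alpha}$ bound — the small probability $\mathfrak{R}^{-2\bar p/q'}=\Delta t^{2\bar p/(Gq')}$ beats the polynomially growing moment. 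It is precisely this H\"older--Chebyshev balance (requiring $q'>1$ and $pp'\le\bar p$ with $q'=\frac{2\bar p}{(2\bar\alpha p+1)G}$) that produces the constraint $p\le\frac{2\bar p-G}{2+2\bar\alpha G}$, not the absorption of stray $\Delta t^{-1}$ powers in a one-step estimate as you conjectured. To complete a correct proof you need to insert this two-level (localized event / complementary event) decomposition; as written, your argument stops at a recursion that cannot be closed.
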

The proof of this lemma is provided in detail in \ref{appen:proof-moment-bound-modi-method}. Equipped with bounded numerical moments, we can build up the convergence rates for numerical approximations.

\subsection{%
  \texorpdfstring{Mean-square convergence error analysis for the Euler-type schemes \eqref{eq:Modified-Euler-method}}
  {Mean-square convergence error analysis for the Euler-type schemes}%
}

\label{sec:Convergence-analysis}
In this section, we focus on the convergence error of the Euler-type schemes applied to the interacting particle system \eqref{eq:interat-parti-system}. Specifically, we consider the continuous-time version of the Euler-type schemes \eqref{eq:Modified-Euler-method} for this system. For any $i\in \mathcal{I}_{N}$, $s\in [0, \mathcal{T}]$, let $\tau_n(s) = \frac{\lfloor ns \rfloor}{n} = \sup \{ \eth \in \{t_k, k=0,1,\cdots,n\}, \eth  \leq s\}$, the continuous-time version of Euler-type schemes \eqref{eq:Modified-Euler-method} is given by:
\begin{equation}  \label{eq:continuous-Modified-Euler-method}
\begin{aligned}
    Y^{i,N}(s) = &~Y^{i,N}_{0} + \int_{0}^{s}\Gamma_{1}\left(f\left(\tau_{n}(r), Y^{i,N}\left(\tau_{n}(r)\right),\rho^{Y,N}_{\tau_{n}(r)}\right),\Delta t\right) dr  \\
    &~+ \sum_{j=1}^{m} \int_{0}^{s} \Gamma_{2}\left(g_{j}\left(\tau_{n}(r),Y^{i,N}\left(\tau_{n}(r)\right),\rho^{Y,N}_{\tau_{n}(r)}\right),\Delta t \right) d W_{j}^{i}(r) \\
    &~ + \int_{0}^{s}\int_{\mathcal{E}}\Gamma_{3}\left(h\left(\tau_{n}(r),Y^{i,N}\left(\tau_{n}(r)\right),\rho^{Y,N}_{\tau_{n}(r)},v\right),\Delta t \right)\tilde{p}^{i}_{\varphi}(dv,dr) .    
\end{aligned}   
\end{equation}

To characterize the differences between $f, g, h$ and $\Gamma_{1}, \Gamma_{2}, \Gamma_{3}$, we impose the following assumptions on Euler-type schemes \eqref{eq:Modified-Euler-method}. These assumptions hold for all $0 \leq t \leq \mathcal{T}$, $  y, \bar{y} \in \mathbb{R}^d$, $\rho,\bar{\rho}\in \mathscr{P}_{2}(\mathbb{R}^{d})$, and $v\in \mathcal{E}$, with the definitions of $F_{l}^{\mathfrak{i}}(l=1,2,3,~\mathfrak{i}=0,1,\cdots,m+1)$, $\bm{Y}$ and $\bm{\bar{Y}}$ as previously introduced.
\begin{assumption} \label{ass:Coefficient-comparison-conditions-of-Gamma1-Gamma3}
There exist $C>0, \delta_{l} \ge \frac{1}{2}$ and $\gamma_{l} \ge 1$ ($l=1,2,3$) such that
\begin{align*}
\left|\Gamma_{l}\left(F_l^{\mathfrak{i}}(\bm{Y}),\Delta t\right)-F_l^{\mathfrak{i}}(\bm{Y})\right| \le & C \Delta t ^{\delta_{l} } \left|F_l^{\mathfrak{i}}(\bm{Y})\right|^{\gamma_{l}},  \quad l=1,2, ~\mathfrak{i}=0,1,\cdots,m,  \\
\int_{\mathcal{E}}\left|\Gamma_{3}(F_{3}^{m+1}(\bm{\bar{Y}}),\Delta t)-F_{3}^{m+1}(\bm{\bar{Y}})\right|^{2}\varphi(dv) \le & C \Delta t ^{2\delta_{3} } \int_{\mathcal{E}}|F_{3}^{m+1}(\bm{\bar{Y}})|^{2\gamma_{3}}\varphi(dv).
\end{align*}
\end{assumption}

\begin{assumption}[\textbf{Enhanced coupled monotonicity condition}]
\label{ass:Enhanced-Coupled-mono-condi}
For some constants $C>0$ and $\eta>1$, we have
\begin{gather*}
\begin{aligned}
2\big\langle y-\bar{y},f(t,y,\rho)-f(t,\bar{y},\bar{\rho})\big\rangle + \eta \sum_{j=1}^{m}\left|g_{j}(t,y,\rho)-g_{j}(t,\bar{y},\bar{\rho})\right|^2+ &~\eta \int_\mathcal{E}\left|h(t,y, \rho, v)-h(t,\bar{y}, \bar{\rho},v)\right|^2 \varphi(d v)  \\
& \leq C\left(|y-\bar{y}|^2+\mathbb{W}_2^2(\rho, \bar{\rho})\right).
\end{aligned}
\end{gather*}
\end{assumption}

We note that different choices of $\Gamma_{l}$ for $l=1,2,3$ result in different values for the parameters $\delta_l$ and $\gamma_l$ ($l=1,2,3$). Some special choices for $\Gamma_l$  are presented in Section \ref{sec:examples}. In Assumption \ref{ass:Coefficient-comparison-conditions-of-f}, we only require that $\hat{\delta}$ and $\hat{\gamma}$ be greater than 0. However, in Assumption \ref{ass:Coefficient-comparison-conditions-of-Gamma1-Gamma3}, we impose the stronger conditions $\delta_1 \geq \frac{1}{2}$ and $\gamma_1 \geq 1$. Assumption \ref{ass:Enhanced-Coupled-mono-condi} is stronger than Assumption \ref{ass:Coupled-mono-condi} due to $\eta > 1$. Using similar arguments as those presented in \eqref{ineq:Polynomial-Growth-of-b}-\eqref{ineq:growth-condition-of-c}, we can establish polynomial growth for the diffusion and jump coefficients based on Assumptions \ref{ass:Enhanced-Coupled-mono-condi} and \ref{ass:poly-growth-coeff-a}.

Under these assumptions, we illustrate our conclusion: Euler-type schemes \eqref{eq:continuous-Modified-Euler-method} mean-square converges to the IPS \eqref{eq:interat-parti-system}.

\begin{theorem}
\label{thm:con-result-particle-scheme}
Let Assumptions \ref{ass:inital-val-con}, \ref{ass:Coercivity-condition-Chen},  \ref{ass:poly-growth-coeff-a}-\ref{ass:holder-continuous}, \ref{ass:Gamma-control-conditions}, and Assumptions \ref{ass:Coefficient-comparison-conditions-of-Gamma1-Gamma3}-\ref{ass:Enhanced-Coupled-mono-condi} hold. Then for any $\varepsilon>0$, there exists  $C$ independent of $n$ and $N$ such that the mean-square error between Euler-type schemes \eqref{eq:continuous-Modified-Euler-method} and the solution of the IPS \eqref{eq:interat-parti-system} satisfies
$$
\sup _{i\in \mathcal{I}_{N}} \sup _{t \in[0, \mathcal{T}]} \mathbb{E}\left[\left|X^{i, N}(t)-Y^{i, N}(t)\right|^{2}\right] \leq C \Delta t^{\frac{2}{2+\varepsilon}} \left(1+\left(\mathbb{E}\left[\left|Y_{0}^{i,N}\right|^{2\bar{p}}\right]\right)^{\beta}\right),
$$
where $\bar{p}$ comes from Assumption \ref{ass:Coercivity-condition-Chen} and $\beta>0$ is from Lemma \ref{lem:moment-bounds-of-numerical-solution}.
\end{theorem}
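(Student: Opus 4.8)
The plan is to run the standard strong-error argument for super-linear SDE schemes, adapted to the McKean--Vlasov/Lévy setting, with the moment bound of Lemma \ref{lem:moment-bounds-of-numerical-solution} as the workhorse. Write $e^{i,N}(t) := X^{i,N}(t) - Y^{i,N}(t)$. First I would apply the Itô formula to $|e^{i,N}(t)|^{2}$ (the jump version, treating the compensated Poisson integral carefully), obtaining a drift term $2\langle e^{i,N}, f(X^{i,N},\rho^{X,N}) - \Gamma_1(f(Y^{i,N}(\tau_n),\rho^{Y,N}_{\tau_n}),\Delta t)\rangle$, a diffusion-square term $\sum_j |g_j(X^{i,N},\rho^{X,N}) - \Gamma_2(g_j(\cdots),\Delta t)|^2$, and a jump term $\int_{\mathcal E}|h(X^{i,N},\rho^{X,N},v) - \Gamma_3(h(\cdots),\Delta t)|^2\varphi(dv)$, plus martingale pieces that vanish in expectation. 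The key algebraic move is to insert and subtract the \emph{untransformed, frozen-argument} coefficients, i.e. split each difference as $\big[\text{coeff}(X^{i,N},\rho^{X,N}) - \text{coeff}(Y^{i,N}(\tau_n),\rho^{Y,N}_{\tau_n})\big] + \big[\text{coeff}(Y^{i,N}(\tau_n),\rho^{Y,N}_{\tau_n}) - \Gamma_\ell(\text{coeff}(\cdots),\Delta t)\big]$. For the first bracket one further inserts $\text{coeff}(Y^{i,N}(t),\rho^{Y,N}_t)$ so that the monotonicity-type estimate can be used on $X^{i,N}(t)$ vs. $Y^{i,N}(t)$ and the remaining "time-continuity of the scheme" part is handled separately.

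The first bracket, after this manipulation, is controlled by combining Assumption \ref{ass:Enhanced-Coupled-mono-condi} (the enhanced coupled monotonicity, whose $\eta>1$ margin absorbs cross terms from the diffusion/jump expansions and the empirical-measure coupling via $\mathbb{W}_2^2(\rho^{X,N}_t,\rho^{Y,N}_t) \le \frac1N\sum_j |e^{j,N}(t)|^2$), the polynomial growth estimates \eqref{ineq:Polynomial-Growth-of-b}--\eqref{ineq:growth-condition-of-c}, and Young/Hölder inequalities together with the uniform moment bounds from Lemma \ref{lem:moment-bounds-of-numerical-solution} (and the exact-solution moment bound recalled after Assumption \ref{ass:Continuity—Conditions-f}, valid since $\bar p$ is large). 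The "one-step" discrepancy $\mathbb{E}[|Y^{i,N}(t) - Y^{i,N}(\tau_n(t))|^{2q}]$ is estimated via Lemma \ref{lem:estimate-of-difference-of-numerical-solution}, giving $O(\Delta t^{q})$ contributions from the drift and diffusion and only $O(\Delta t)$ from the jump part (this is exactly the asymmetry flagged in the introduction). The second bracket — the transformation error — is exactly what Assumption \ref{ass:Coefficient-comparison-conditions-of-Gamma1-Gamma3} bounds: $\Delta t^{2\delta_\ell}|F_\ell|^{2\gamma_\ell}$ with $\delta_\ell\ge\frac12$, $\gamma_\ell\ge1$; since $F_\ell$ has polynomial growth in $Y^{i,N}(\tau_n)$, taking expectations and using the moment bound turns this into $C\Delta t^{2\delta_\ell}(1+\cdots)$, i.e. at least $O(\Delta t)$. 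Collecting, one arrives at
$$
\mathbb{E}[|e^{i,N}(t)|^2] \le C\int_0^t \Big(\mathbb{E}[|e^{i,N}(s)|^2] + \tfrac1N\smallsum_{j}\mathbb{E}[|e^{j,N}(s)|^2]\Big)\,ds + C\,\Delta t^{\theta_0}\big(1+(\mathbb{E}[|Y_0^{i,N}|^{2\bar p}])^\beta\big)
$$
for some $\theta_0>0$; averaging over $i$ (or noting the bound is uniform in $i$ by exchangeability) and applying Grönwall's inequality yields a rate $\Delta t^{\theta_0}$.

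The final point is to identify the exponent as $\frac{2}{2+\varepsilon}$, i.e. arbitrarily close to $1$ in $\mathbb{E}[|e|^2]$ (rate $\frac12$ in $L^2$). Here the loss below $1$ comes from the super-linear terms: when one applies Young's inequality to split a product like $|e(s)|^{2-1/p}\cdot(\text{polynomial in the solutions})$ into $|e(s)|^2$ plus a high-moment remainder, the remainder is finite only because $p$ in Lemma \ref{lem:moment-bounds-of-numerical-solution} can be taken large, and the price is a factor $\Delta t$ raised to a power that degrades as $p\uparrow$; choosing $p$ large enough (equivalently $\bar p$ large, which is assumed) pushes $\theta_0$ up to $\frac{2}{2+\varepsilon}$ for any prescribed $\varepsilon>0$. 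I expect the main obstacle to be bookkeeping the exponents in this last optimization — tracking how $\gamma$, $\gamma_\ell$, $\delta_\ell$ and the admissible moment order $p$ combine so that every high-moment remainder stays finite while the residual power of $\Delta t$ does not drop below $\frac{2}{2+\varepsilon}$ — rather than any single hard inequality; the enhanced monotonicity assumption is precisely designed to keep the diffusion and jump cross-terms from costing extra powers of $\Delta t^{-1}$ via the $\Gamma$-truncation bound in Assumption \ref{ass:Gamma-control-conditions}. A secondary technical nuisance is the careful treatment of the compensated Poisson integral in the Itô expansion and in the martingale-moment (Kunita/BDG-type) estimates, which must be done at the level of $2p$-th moments to feed the Grönwall step cleanly.
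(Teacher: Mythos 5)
Your proposal follows essentially the same route as the paper: It\^o formula applied to the squared error, a telescoping insertion of intermediate coefficient evaluations so that Assumption~\ref{ass:Enhanced-Coupled-mono-condi} (with its $\eta>1$ margin absorbing the diffusion and jump cross-terms via Young's inequality) handles the $(X^{i,N}(s),\rho^{X,N}_s)$ vs.\ $(Y^{i,N}(s),\rho^{Y,N}_s)$ piece, while the remaining piece --- coefficient at $(Y^{i,N}(s),\rho^{Y,N}_s)$ minus $\Gamma_\ell$ at the frozen argument $(\tau_n(s),Y^{i,N}(\tau_n(s)),\rho^{Y,N}_{\tau_n(s)})$ --- is controlled by the one-step estimate (Lemma~\ref{lem:estimate-of-difference-of-numerical-solution}), the transformation error (Assumption~\ref{ass:Coefficient-comparison-conditions-of-Gamma1-Gamma3}), and the moment bounds, packaged in the paper as Lemma~\ref{lem:Difference-of-coefficients}; the empirical $\mathbb{W}_2$ bound reduces the measure term to the particle average and Gr\"onwall closes the argument, with the loss $\frac{2}{2+\varepsilon}$ coming exactly from the H\"older split of the polynomial weight as you describe. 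Your initial two-term decomposition (subtract the frozen, untransformed coefficient, then refine the first bracket) is a trivially reordered version of the paper's (subtract the continuous-time, untransformed coefficient first); the resulting three pieces are identical.
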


To ensure readability, we skip the proof and later present it in \ref{sec:proof}. For completeness, we quantify the numerical errors associated with Euler-type schemes \eqref{eq:Modified-Euler-method} when approximating L\'evy-driven McKean-Vlasov SDEs \eqref{eq:MVSDES-problem}. This result directly follows from the combination of Theorem \ref{thm:con-result-particle-scheme} and Proposition \ref{Pro:propa-of-chaos}, assuming bounded higher-order moments for the initial values.
\begin{corollary}  \label{cor:convergence-theorem}
    Under the same assumptions as in Theorem \ref{thm:con-result-particle-scheme}, for any $\varepsilon>0$, there exists  $C$ independent of $n$ and $N$ such that the mean-square error between Euler-type schemes \eqref{eq:continuous-Modified-Euler-method} and the solution of the NIPS \eqref{eq:non-interat-parti-system} satisfies
    $$
    \sup _{i\in \mathcal{I}_{N}} \sup _{t \in[0, \mathcal{T}]} \mathbb{E}\left[\left|X^i(t)-Y^{i,N}(t)\right|^2\right] \leq C \begin{cases}N^{-1 / 2}+\Delta t^{\frac{2}{2+\epsilon}}, & \text { if } d<4, \\ N^{-1 / 2} \ln (N)+\Delta t^{\frac{2}{2+\epsilon}}, & \text { if } d=4, \\ N^{-2 / d}+\Delta t^{\frac{2}{2+\epsilon}}, & \text { if } d>4. \end{cases}    $$
\end{corollary}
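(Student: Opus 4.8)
The plan is to derive Corollary~\ref{cor:convergence-theorem} as a direct consequence of Theorem~\ref{thm:con-result-particle-scheme} and Proposition~\ref{Pro:propa-of-chaos}, using nothing more than the triangle inequality in $L^2(\Omega)$ and the elementary bound $(a+b)^2 \le 2a^2 + 2b^2$. The starting point is the decomposition
\[
X^i(t) - Y^{i,N}(t) = \big(X^i(t) - X^{i,N}(t)\big) + \big(X^{i,N}(t) - Y^{i,N}(t)\big),
\]
valid for each $i \in \mathcal{I}_N$ and $t \in [0,\mathcal{T}]$, where $X^i$ solves the NIPS~\eqref{eq:non-interat-parti-system}, $X^{i,N}$ solves the IPS~\eqref{eq:interat-parti-system}, and $Y^{i,N}$ is the continuous-time Euler-type approximation~\eqref{eq:continuous-Modified-Euler-method}.

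First I would square both sides, take expectations, and apply $\mathbb{E}[|a+b|^2] \le 2\mathbb{E}[|a|^2] + 2\mathbb{E}[|b|^2]$ to obtain
\[
\mathbb{E}\big[|X^i(t) - Y^{i,N}(t)|^2\big] \le 2\,\mathbb{E}\big[|X^i(t) - X^{i,N}(t)|^2\big] + 2\,\mathbb{E}\big[|X^{i,N}(t) - Y^{i,N}(t)|^2\big].
\]
Then I would take $\sup_{i \in \mathcal{I}_N}\sup_{t \in [0,\mathcal{T}]}$ on both sides; since the supremum of a sum is bounded by the sum of the suprema, the right-hand side splits into the POC term bounded by Proposition~\ref{Pro:propa-of-chaos} (note the hypotheses of Theorem~\ref{thm:con-result-particle-scheme} include Assumptions~\ref{ass:inital-val-con}--\ref{ass:Continuity—Conditions-f} with $\bar p$ large enough, so Proposition~\ref{Pro:propa-of-chaos} applies, recalling that Assumption~\ref{ass:Continuity—Conditions-f} is implied here or is part of the standing hypotheses) and the discretization term bounded by Theorem~\ref{thm:con-result-particle-scheme} by $C\Delta t^{2/(2+\varepsilon)}(1 + (\mathbb{E}[|Y_0^{i,N}|^{2\bar p}])^{\beta})$. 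Since $Y_0^{i,N} = X_0^i$ and $\mathbb{E}[|X_0^i|^{2\bar p}] = \mathbb{E}[|X_0|^{2\bar p}] < \infty$ by Assumption~\ref{ass:inital-val-con}, this last factor is a finite constant that can be absorbed into $C$, which remains independent of $n$ and $N$. Combining the two bounds and relabelling $\varepsilon$ as $\epsilon$ yields exactly the three-case estimate in the statement, with the constant $C$ independent of both $n$ and $N$ because both input estimates enjoy that independence.

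This argument is entirely routine; there is no genuine obstacle. The only points requiring a word of care are: (i) checking that the moment/regularity hypotheses needed for Proposition~\ref{Pro:propa-of-chaos} (namely $\bar p > 2$ there) are compatible with those assumed in Theorem~\ref{thm:con-result-particle-scheme} — which they are, since Theorem~\ref{thm:con-result-particle-scheme} already demands $\bar p \ge 1 + (\bar\alpha + \tfrac12)G$ and, if necessary, one simply takes $\bar p$ large enough to satisfy both; and (ii) confirming that the generic constant $C$ absorbs the initial-moment factor $(1 + (\mathbb{E}[|X_0|^{2\bar p}])^\beta)$ without reintroducing $N$- or $n$-dependence, which is immediate since the $X_0^i$ are i.i.d.\ copies of $X_0$. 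I would therefore present the proof in two or three lines, invoking the triangle inequality, then Proposition~\ref{Pro:propa-of-chaos} and Theorem~\ref{thm:con-result-particle-scheme} in turn.
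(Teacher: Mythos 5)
Your proposal is correct and is essentially the paper's own argument, which the paper records only as a one-line remark (``This result directly follows from the combination of Theorem \ref{thm:con-result-particle-scheme} and Proposition \ref{Pro:propa-of-chaos}, assuming bounded higher-order moments for the initial values''): triangle inequality in $L^2$, POC bound for $X^i - X^{i,N}$, discretization bound for $X^{i,N} - Y^{i,N}$, and absorption of the initial-moment factor $(1+(\mathbb{E}[|X_0|^{2\bar p}])^\beta)$ into the constant $C$. Your flagging of the assumption-compatibility point is appropriate and resolves cleanly --- Assumption \ref{ass:Continuity—Conditions-f} follows from the Lipschitz-type estimate in Assumption \ref{ass:poly-growth-coeff-a}, and Assumption \ref{ass:Coupled-mono-condi} is implied by Assumption \ref{ass:Enhanced-Coupled-mono-condi} since $\eta>1$ --- a bookkeeping detail the paper glosses over.
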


\subsection{%
  \texorpdfstring{Some choices of the operators in Euler-type schemes \eqref{eq:Modified-Euler-method}}
  {Some choices of the operators in Euler-type schemes}%
}

\label{sec:examples}
In this section, we explicitly present the operators $\Gamma_{i}(i=1,2,3) $. For the definitions of map $F_{l}^{\mathfrak{i}}$ and variables $\bm{Y}$ and $\bm{\bar{Y}}$, we refer to Section \ref{subsec:modified-Euler-schemes}.

\begin{example}[Tanh Euler method (TanhEM)]  \label{ex:tanh}
In Euler-type schemes \eqref{eq:Modified-Euler-method}, we focus on
\begin{equation}
    \begin{aligned} \label{ex:tanh-euler-scheme}
    \Gamma_{l}\left(F_{l}^{\mathfrak{i}}\left(\bm{Y}\right),\Delta t\right)& =  \Delta t^{-1}\tanh\left(\Delta t~F_{l}^{\mathfrak{i}}\left(\bm{Y}\right)\right), \quad l=1,2, ~\mathfrak{i}=0,1,\cdots,m,\\ 
     \Gamma_{3}\left(F_{3}^{m+1}\left(\bar{\bm{Y}}\right),\Delta t\right) & =  \Delta t^{-1} \tanh\left(\Delta t~F_{3}^{m+1}\left(\bar{\bm{Y}}\right)\right). 
    \end{aligned}
\end{equation}
We verify in \ref{verfivation-examples} that $\Gamma_{1},\Gamma_{2}$ and $\Gamma_{3}$ satisfy Assumptions \ref{ass:Gamma-control-conditions} and \ref{ass:Coefficient-comparison-conditions-of-Gamma1-Gamma3} with the parameters $\alpha_{l}=1$, $\delta_{l}=1$ and $\gamma_{l}=2$ for $l=1,2,3$.
\end{example}

\begin{example}[Tamed Euler method (TameEM)] \label{ex:tame}
In Euler-type schemes \eqref{eq:Modified-Euler-method}, we consider 
\begin{equation}
\begin{aligned} \label{ex:tamed-euler-scheme}
    \Gamma_{l}\left(F_{l}^{\mathfrak{i}}\left(\bm{Y}\right),\Delta t\right) & =  \frac{F_{l}^{\mathfrak{i}}\left(\bm{Y}\right)}{1+\Delta t\left|F_{l}^{\mathfrak{i}}\left(\bm{Y}\right)\right|}, \quad l=1,2, ~\mathfrak{i}=0,1,\cdots,m,\\
    \Gamma_{3}\left(F_{3}^{m+1}\left(\bar{\bm{Y}}\right),\Delta t\right) & =  \frac{F_{3}^{m+1}\left(\bar{\bm{Y}}\right)}{1+\Delta t\left|F_{3}^{m+1}\left(\bar{\bm{Y}}\right)\right|}.
\end{aligned}
\end{equation}
In \ref{verfivation-examples}, we verify that $\Gamma_{1},\Gamma_{2}$ and $\Gamma_{3}$ satisfy Assumptions \ref{ass:Gamma-control-conditions} and \ref{ass:Coefficient-comparison-conditions-of-Gamma1-Gamma3}, given the parameters $\alpha_{l}=1$, $\delta_{l}=1$ and $\gamma_{l}=2$ for $l=1,2,3$.
\end{example}

\begin{example}[Sine Euler method (SineEM)] \label{ex:sine}
In Euler-type schemes \eqref{eq:Modified-Euler-method}, we investigate
\begin{equation}
    \begin{aligned}  \label{ex:sine-euler-scheme}
    \Gamma_{l}\left(F_{l}^{\mathfrak{i}}\left(\bm{Y}\right),\Delta t\right)& =  \Delta t^{-1}\sin\left(\Delta t~F_{l}^{\mathfrak{i}}\left(\bm{Y}\right)\right), \quad l=1,2, ~\mathfrak{i}=0,1,\cdots,m,\\ 
     \Gamma_{3}\left(F_{3}^{m+1}\left(\bar{\bm{Y}}\right),\Delta t\right) & =  \Delta t^{-1} \sin\left(\Delta t~F_{3}^{m+1}\left(\bar{\bm{Y}}\right)\right). 
    \end{aligned}
\end{equation}
By a verification procedure similar to that in Example \ref{ex:tanh}, one then observes that Assumptions \ref{ass:Gamma-control-conditions} and \ref{ass:Coefficient-comparison-conditions-of-Gamma1-Gamma3} hold for $\alpha_{l}=1$, $\delta_{l}=1$, and $\gamma_{l}=2$, with $l=1,2,3$.
\end{example}

\begin{example}[Mix Euler method (MixEM)] \label{ex:Mix}
In Euler-type schemes \eqref{eq:Modified-Euler-method}, we discuss 
\begin{equation}
    \begin{aligned}   \label{ex:Mix-euler-scheme}
        \Gamma_{1}\left(F_{1}^{0}\left(\bm{Y}\right),\Delta t\right) & =  \Delta t^{-1}\sin\left(\Delta t~F_{1}^{0}\left(\bm{Y}\right)\right),  \\
        \Gamma_{2}\left(F_{2}^{\mathfrak{i}}\left(\bm{Y}\right),\Delta t\right)&=\frac{F_{2}^{\mathfrak{i}}\left(\bm{Y}\right)}{1+\Delta t\left|F_{2}^{j}\left(\bm{Y}\right)\right|}, 
    \quad \mathfrak{i}= 1, \cdots m, \\
        \Gamma_{3}\left(F_{3}^{m+1}\left(\bm{\bar{Y}}\right),\Delta t\right) & = \Delta t^{-1} \tanh\left(\Delta t~F_{3}^{m+1}\left(\bm{\bar{Y}}\right)\right).
    \end{aligned}
\end{equation}
Using a verification procedure similar to that in Examples \ref{ex:tanh}-\ref{ex:sine}, one observes that Assumptions \ref{ass:Gamma-control-conditions} and \ref{ass:Coefficient-comparison-conditions-of-Gamma1-Gamma3} are satisfied for $\alpha_{l}=1$, $\delta_{l}=1$, and $\gamma_{l}=2$, where $l=1,2,3$.
\end{example}
From Theorem \ref{thm:con-result-particle-scheme}, it follows that for any sufficient small $\varepsilon$, the mean-square convergence rates are $\frac{1-\varepsilon}{2}$ for Euler-type schemes in Examples \ref{ex:tanh}-\ref{ex:Mix}. We provide numerical evidence (Examples \ref{exam:3/2-volatility-model} and \ref{exam:double-well-model}) supporting the theoretical strong convergence rate.

\section{Numerical results} \label{sec:Numerical-Experiments}
We present numerical illustrations to support our theoretical findings by testing Euler-type schemes \eqref{eq:Modified-Euler-method} on two jump-extended models, in which all coefficients may exhibit superlinear growth with respect to the state variable. To examine the convergence rates of the numerical methods described in Section 4, we focus on TanhEM \eqref{ex:tanh-euler-scheme}, TameEM \eqref{ex:tamed-euler-scheme}, SineEM \eqref{ex:sine-euler-scheme}, and MixEM \eqref{ex:Mix-euler-scheme}.

In Examples \ref{exam:3/2-volatility-model} and \ref{exam:double-well-model}, the particle method is employed to approximate the law $\mathscr{L}(X(t_{k}))$ at each time step $t_k(k=0,1, \ldots, n)$, using its empirical distribution with $N=500$ particles. For the mean-square error (MSE), we approximate it at terminal time $\mathcal{T}$ in the following form:
$$
{\rm MSE} = \left(\frac{1}{N}\sum_{i=1}^{N}\left|Y_{\mathcal{T}}^{i,N,n_{\delta t}}-Y_{\mathcal{T}}^{i,N,n_{\Delta t}} \right|^{2}\right)^{\frac{1}{2}},
$$
where $Y_{\mathcal{T}}^{i,N,n_{\delta t}}$, $Y_{\mathcal{T}}^{i,N,n_{\Delta t}}$ represent the numerical solutions for the $i$-th particle with step sizes $\Delta t$ and $\delta t$, respectively. 

All numerical experiments were conducted in MATLAB R2022a on a laptop with a 12th Gen Intel\textsuperscript{\tiny\textregistered} Core\textsuperscript{\tiny\texttrademark} i5-12500H mobile processor (2.50 GHz base frequency), 16 GB DDR4 RAM, and Windows 11 Pro 64-bit operating system. To ensure reproducibility, the Mersenne Twister random number generator (algorithm ID: 'twister' in MATLAB) was initialized with a seed value of 100 using the rng(seed, 'twister') function prior to each simulation.

\begin{example} \label{exam:3/2-volatility-model}
Consider the jump-extended $\frac{3}{2}$-volatility model of the form
\begin{equation} \label{exam:Num-3/2-Volatility}
\left\{\begin{array}{l}
d V(t)= a_{1} \left(V(t)\left(a_{2}-|V(t)| \right)+ \mathbb{E}\left[V(t)\right]\right) d t + b \left(|V(t)|^{\frac{3}{2}}+\mathbb{E}\left[V(t)\right]\right) d W(t),  \\
\qquad \qquad +c\big(1-V(t)-\mathbb{E}\left[V(t)\right]\big)d\tilde{N}(t) \\
V_0=v,
\end{array}\right.
\end{equation}
where $t\in[0,1]$, $a_{1}=6, a_{2}=2, b= -0.1, c = 1$, $v=0.5$ and $\tilde{N}(t)$ denote a compensated Poisson process with jump intensity $\lambda = 2$.
\end{example}

We confirm that this model satisfies Assumptions \ref{ass:Coercivity-condition-Chen} and \ref{ass:poly-growth-coeff-a} with parameters $ \gamma =1, \bar{p}=297$ while setting $\eta=1.5, \theta =1$. A detailed verification is provided in \ref{verfivation-numerical-examples}.

To rigorously determine the mean-square convergence rates, we compute reference solutions using Monte Carlo simulations with sufficiently small time-step $\delta t = 1/2^{13}$ and $L = 2 \times 10^3$ independent sample paths. Figure \ref{fig:3/2 volatility} shows the mean-square error (MSE) of four numerical methods-TanhEM \eqref{ex:tanh-euler-scheme}, TameEM \eqref{ex:tamed-euler-scheme}, SineEM \eqref{ex:sine-euler-scheme}, and MixEM \eqref{ex:Mix-euler-scheme}-visualized for five separate step sizes $\Delta t = 2^{-l}(l=7,\cdots,11)$ on a log-log scale. We also include a dashed line with $\frac{1}{2}$ slope to visualize the contrast. The results indicate that the error curves for the TanhEM \eqref{ex:tanh-euler-scheme}, TameEM \eqref{ex:tamed-euler-scheme}, SineEM \eqref{ex:sine-euler-scheme} and MixEM \eqref{ex:Mix-euler-scheme} methods are nearly parallel to the reference line, confirming an empirical mean-square convergence rate close to $\frac{1}{2}$, consistent with theoretical expectations.
 
\begin{figure}[H]
\begin{center}
\includegraphics[height=5cm]{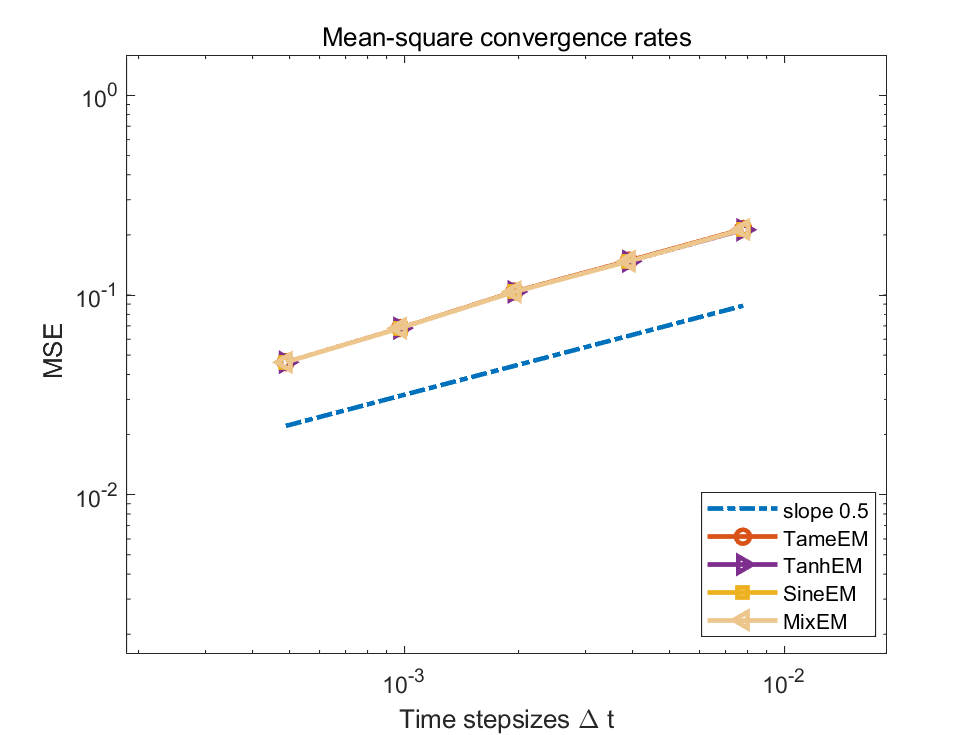}     
\caption{Convergence rates of Euler-type schemes for Example \ref{exam:3/2-volatility-model}}  
\label{fig:3/2 volatility}                   
\end{center}         
\end{figure}

In addition, $500$ simulated paths of TanhEM \eqref{ex:tanh-euler-scheme}, TameEM \eqref{ex:tamed-euler-scheme}, SineEM \eqref{ex:sine-euler-scheme} and MixEM \eqref{ex:Mix-euler-scheme} with step size $\Delta t = 2^{-8}$ are presented in Figures \ref{paths-TameEM-TanhEm-vola-model} and \ref{paths-SineEM-MixEm-vola-model}. It can be observed that the trajectories of the four different types of numerical methods on the interval $[0,1]$ are highly consistent, which illustrates the feasibility of our numerical methods. 

In the second example, all coefficients of are not globally Lipschitz in the state variable. We investigated the mean-square convergence rates of TanhEM \eqref{ex:tanh-euler-scheme}, TameEM \eqref{ex:tamed-euler-scheme}, SineEM \eqref{ex:sine-euler-scheme}, and MixEM \eqref{ex:Mix-euler-scheme} in this setting.
\begin{example} \label{exam:double-well-model}
Consider the jump-extended double well dynamics in the form of
\begin{equation} 
\left\{\begin{array}{l}  \label{exam:num-double-well model}
d w(t)=  d_{1}\left( w(t)\left(1-w(t)^{2}\right)+\mathbb{E}\left[w(t)\right]\right) d t + d_{2} \left(1-w(t)^{2}-\mathbb{E}\left[w(t)\right]\right) d W(t) \\
 \qquad \quad + d_{3} \big(w(t)\ln\left(1+w(t)^{2}\right)+\mathbb{E}\left[w(t)\right]\big)d\tilde{N}(t),  \\
w_0=\mu,
\end{array}\right.
\end{equation}
where $t\in[0,1]$, $d_{1}=66, d_{2}=0.19, d_{3} =0.0006$, $\mu=0.5$ and $\tilde{N}(t)$ as before.
\end{example}

\begin{figure}[H]
\begin{minipage}[t]{0.5\linewidth} 
\centering
\includegraphics[width=7cm,height=4cm]{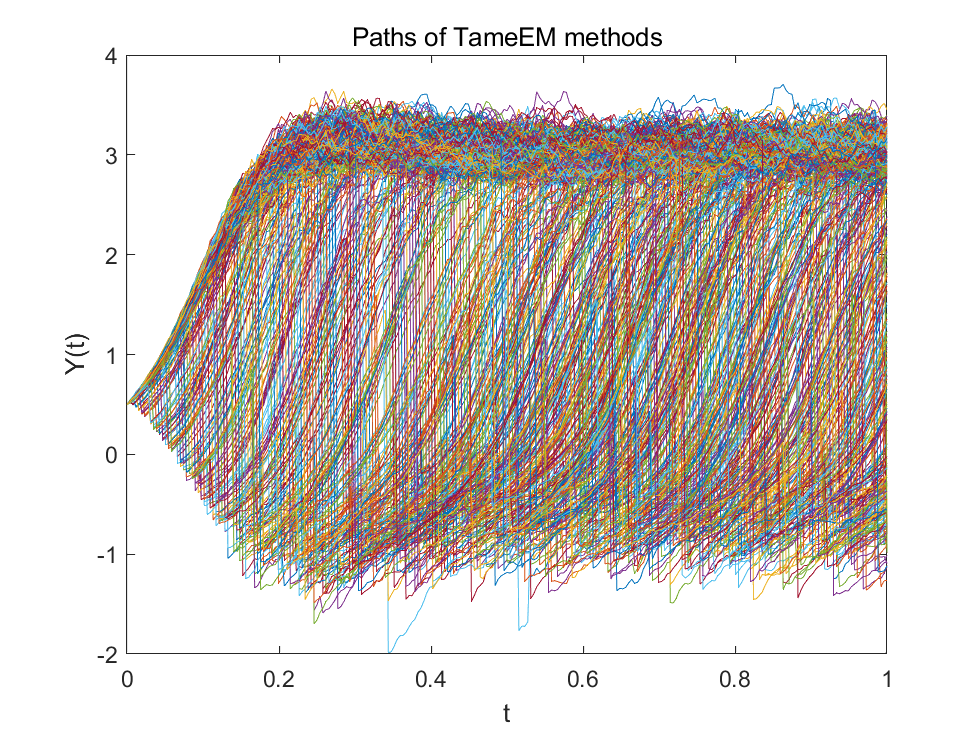}   
\end{minipage}
\hfill
\begin{minipage}[t]{0.5\linewidth}  
\centering
\includegraphics[width=7cm,height=4cm]{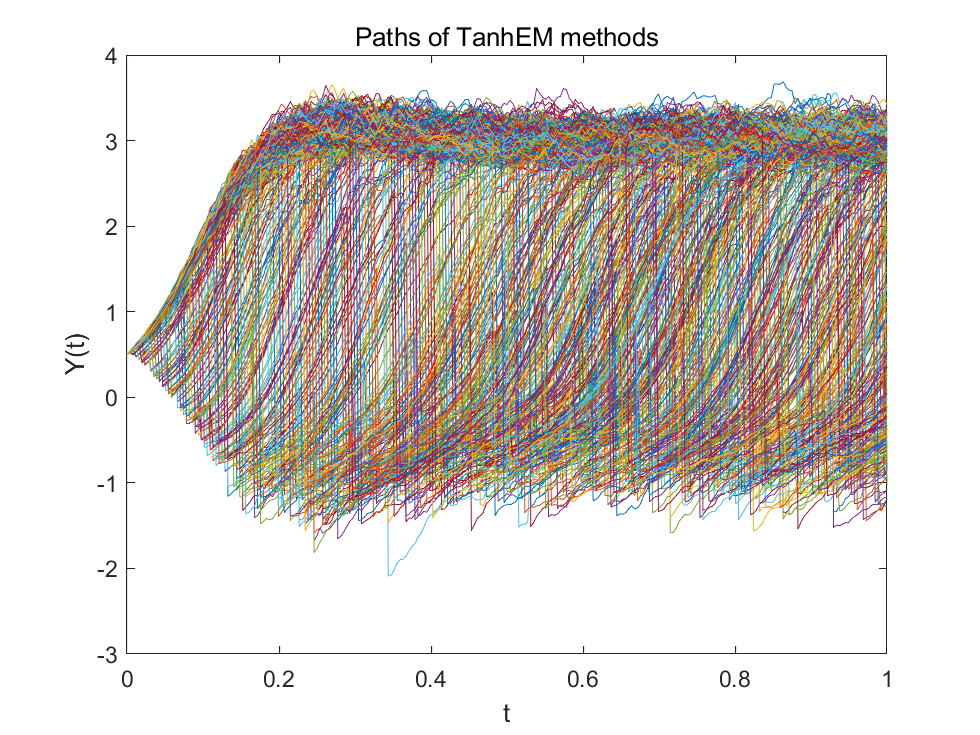} 
\end{minipage}
\caption{Paths of TameEM (left) and TanhEM methods (right) for Example \ref{exam:3/2-volatility-model}}
\label{paths-TameEM-TanhEm-vola-model}
\end{figure}
\begin{figure}[H]
\begin{minipage}[t]{0.5\linewidth} 
\centering
\includegraphics[width=7cm,height=4cm]{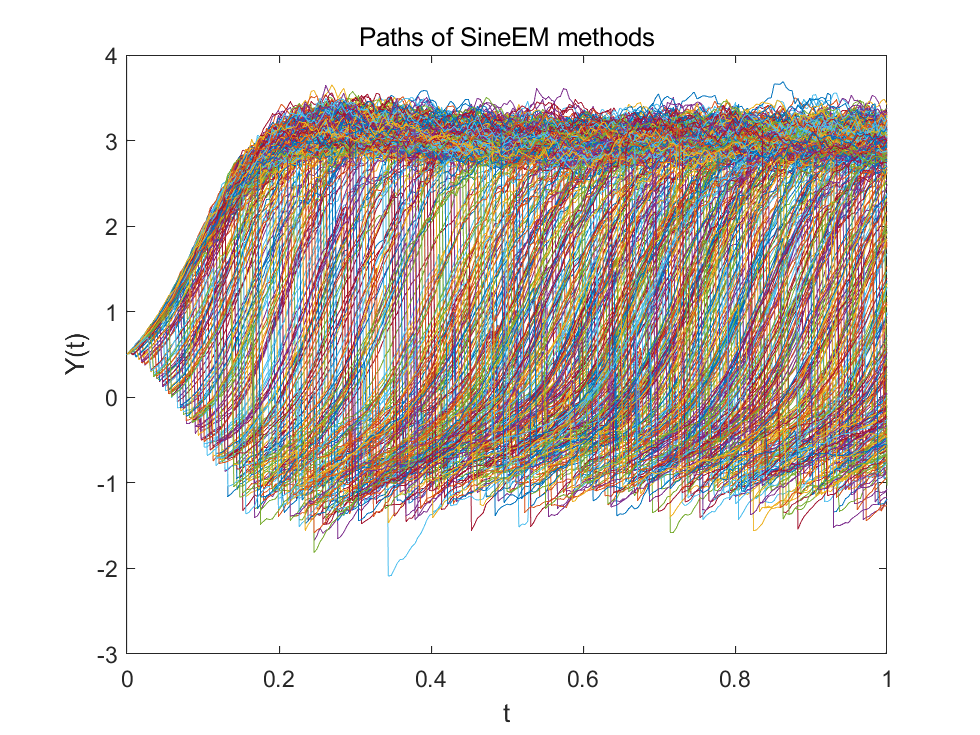}   
\end{minipage}
\hfill
\begin{minipage}[t]{0.5\linewidth}  
\centering
\includegraphics[width=7cm,height=4cm]{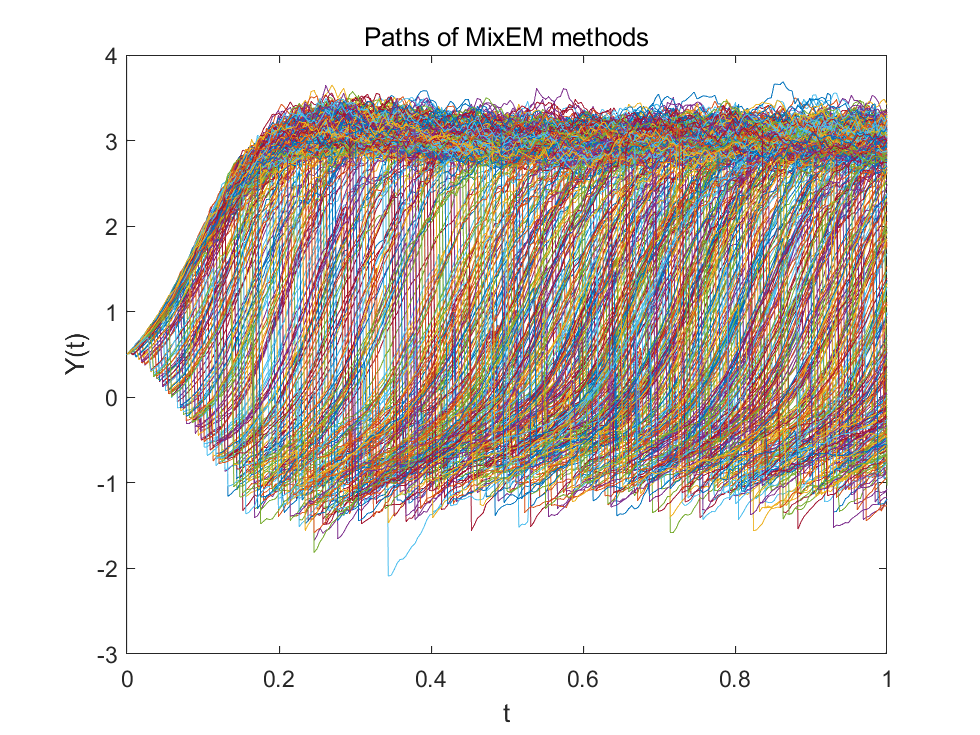} 
\end{minipage}
\caption{Paths of SineEM (left) and MixEM methods (right) for Example \ref{exam:3/2-volatility-model}}
\label{paths-SineEM-MixEm-vola-model}
\end{figure}

It has been verified that Assumptions \ref{ass:inital-val-con}, \ref{ass:Coercivity-condition-Chen}-\ref{ass:Initial-value-is-bounded} are met in this model with $ \gamma =2, \eta =1.5$ and $\bar{p}=1641$ is enough for our setting. A detailed verification is presented in \ref{verfivation-numerical-examples}. In our Monte Carlo approximation, we set the terminal time to $ \mathcal{T}=1 $, use time steps $\Delta t = 2^{-7}, 2^{-8}, 2^{-9}, 2^{-10}, 2^{-11}$, and consider $L =2 \times 10^3$ independent trajectories.

As shown in Figure \ref{fig:double-well}, the  convergence rates of  TanhEM \eqref{ex:tanh-euler-scheme}, TameEM \eqref{ex:tamed-euler-scheme}, SineEM \eqref{ex:sine-euler-scheme} and MixEM \eqref{ex:Mix-euler-scheme} are close to $ \frac{1}{2}$.

\begin{figure}[H]
\centering
\includegraphics[height=4.9cm]{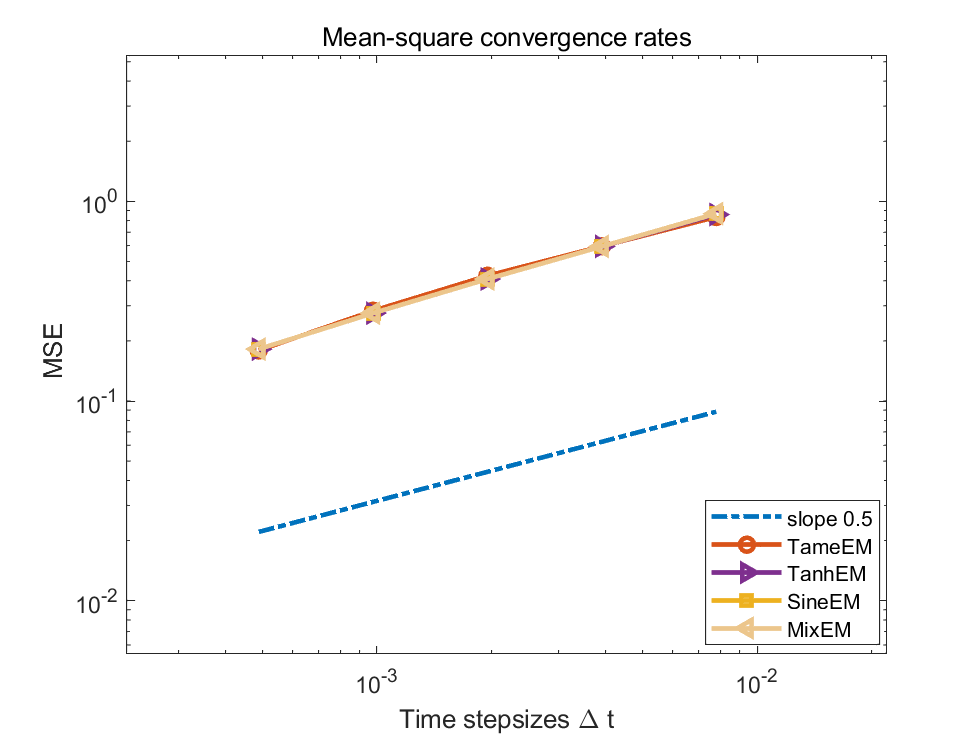} 
\caption{Convergence rates of Euler-type schemes for Example \ref{exam:double-well-model}}
\label{fig:double-well}
\end{figure}

\section*{Acknowledgements}
This work was supported by the NSF grant of China (No.12371417), the Postdoctoral Fellowship Program of CPSF under Grant Number GZC2024205 and 2025M773122, and the innovative project of graduate students of Central South University (2023ZZTS0161).

\vskip6mm
 \bibliographystyle{plain}
\bibliography{ref.bib}


\appendix

\section{Proof of Lemma \ref{lem:moment-bounds-of-numerical-solution}}
\label{appen:proof-moment-bound-modi-method}
\textit{Proof.}
Let $\mathfrak{R}>0$ be a sufficiently large constant and define a sequence of decreasing subevents
    $$
    \Omega^{i}_{\mathfrak{R},k}:=\{\omega\in \Omega :|Y_{t_{j_{1}}}^{i,N}|\le \mathfrak{R}, \, j_{1}=0,\cdots,k\}, 
    $$
with their complements denoted by $\Omega_{\mathfrak{R},k}^{i,c}$.  
    
To begin, we demonstrate that the high-order moment boundedness is preserved within a chosen family of subevents. For integer $\bar{p} \geq 1$, we notice that
\begin{align}  \label{I}
&\mathbb{E}\left[\mathds{1}_{\Omega^{i}_{\mathfrak{R},k+1}}|Y_{t_{k+1}}^{i,N}|^{2\bar{p}}\right]  \notag \\
\le ~& \mathbb{E} \left[\mathds{1}_{\Omega^{i}_{\mathfrak{R},k}} |Y_{t_{k+1}}^{i,N}-Y_{t_{k}}^{i,N}+Y_{t_{k}}^{i,N}|^{2\bar{p}}\right]  \notag \\
    \le ~& \mathbb{E}\left[\mathds{1}_{\Omega^{i}_{\mathfrak{R},k}}|Y_{t_{k}}^{i,N}|^{2\bar{p}}\right] \notag \\
    &~ + \mathbb{E}\left[\mathds{1}_{\Omega^{i}_{\mathfrak{R},k}}|Y_{t_{k}}^{i,N}|^{2\bar{p}-2} \cdot \left(2\bar{p} \langle Y_{t_{k}}^{i,N},Y_{t_{k+1}}^{i,N}-Y_{t_{k}}^{i,N}\rangle + \bar{p}(2\bar{p}-1)|Y_{t_{k+1}}^{i,N}-Y_{t_{k}}^{i,N}|^{2}\right)\right] \notag \\
&~+C \sum_{\kappa=3}^{2\bar{p}} \mathbb{E}\left[\mathds{1}_{\Omega^{i}_{\mathfrak{R},k}}|Y_{t_{k}}^{i,N}|^{2\bar{p}-\kappa}|Y_{t_{k+1}}^{i,N}-Y_{t_{k}}^{i,N}|^{\kappa}\right]  \notag \\
:= ~& \mathbb{E}\left[\mathds{1}_{\Omega^{i}_{\mathfrak{R},k}}|Y_{t_{k}}^{i,N}|^{2\bar{p}}\right] + J_{1} +J_{2}.
\end{align}

According to \eqref{eq:Modified-Euler-method} and the martingale property, we have
\begin{align*} 
    J_{1} 
    = &~ \mathbb{E}  \left[\mathds{1}_{\Omega^{i}_{\mathfrak{R},k}}|Y_{t_{k}}^{i,N}|^{2\bar{p}-2} \cdot 2\bar{p}\cdot \bigg\langle Y_{t_{k}}^{i,N},\Gamma_{1}\left(f\left(t_{k},Y^{i,N}_{t_{k}},\rho_{t_{k}}^{Y,N}\right),\Delta t \right)\Delta t-f\left(t_{k},Y^{i,N}_{t_{k}},\rho_{t_{k}}^{Y,N}\right)\Delta t \bigg\rangle  \right]  \notag \\
    &~ +\mathbb{E} \left[\mathds{1}_{\Omega^{i}_{\mathfrak{R},k}}|Y_{t_{k}}^{i,N}|^{2\bar{p}-2} \cdot 2\bar{p}\cdot \bigg\langle Y_{t_{k}}^{i,N},f\left(t_{k},Y^{i,N}_{t_{k}},\rho_{t_{k}}^{Y,N}\right) \bigg\rangle  \right] \Delta t \notag \\
    &~ + \mathbb{E}
    \left[\mathds{1}_{\Omega^{i}_{\mathfrak{R},k}}|Y_{t_{k}}^{i,N}|^{2\bar{p}-2}\cdot \bar{p}(2\bar{p}-1)\left|\Gamma_{1}\left(f\left(t_{k},Y^{i,N}_{t_{k}},
\rho_{t_{k}}^{Y,N} \right),\Delta t\right)\Delta t\right|^{2}\right]  \notag \\
    &~ + \mathbb{E} 
    \left[\mathds{1}_{\Omega^{i}_{\mathfrak{R},k}}|Y_{t_{k}}^{i,N}|^{2\bar{p}-2}\cdot \bar{p}(2\bar{p}-1)\left|\sum_{j=1}^{m}\Gamma_{2}\left(g^{j}\left(t_{k},Y^{i,N}_{t_{k}},
\rho_{t_{k}}^{Y,N} \right),\Delta t\right)\Delta W_{j}^{i}(t_{k})\right|^{2}\right] \notag \\
&~+\mathbb{E}
\left[\mathds{1}_{\Omega^{i}_{\mathfrak{R},k}}|Y_{t_{k}}^{i,N}|^{2\bar{p}-2}\cdot \bar{p}(2\bar{p}-1)\left|\int_{t_{k}}^{t_{k+1}}\int_{\mathcal{E}}\Gamma_{3}\left(h\left(t_{k},Y^{i,N}_{t_{k}},
\rho_{t_{k}}^{Y,N},v \right),\Delta t \right)\tilde{p}^{i}_{\varphi}(dv,ds)\right|^{2}\right].
\end{align*}
By the Cauchy-Schwarz inequality, Assumption \ref{ass:Coefficient-comparison-conditions-of-f}, Assumption \ref{ass:Gamma-control-conditions} and the polynominal growth of the cofficient $f$ in
\eqref{ineq:growth-condition-of-a}, we get
\begin{align}  \label{ineq:I1}
    J_{1} 
\le &~ \underbrace{C\mathbb{E} \left[\mathds{1}_{\Omega^{i}_{\mathfrak{R},k}}|Y_{t_{k}}^{i,N}|^{2\bar{p}-1}\left(1+\left|Y^{i,N}_{t_{k}}\right|^{\gamma+1}+\mathbb{W}_{2}\left(\rho_{t_{k}}^{Y,N},\delta_{0}\right)\right)^{\hat{\gamma}}\right]\Delta t^{1+\hat{\delta}}}_{J_{1,1}}  \notag  \\
&~+ \underbrace{C\mathbb{E} \left[\mathds{1}_{\Omega^{i}_{\mathfrak{R},k}}|Y_{t_{k}}^{i,N}|^{2\bar{p}-2}\left(1+\left|Y^{i,N}_{t_{k}}\right|^{\gamma+1}+\mathbb{W}_{2}\left(\rho_{t_{k}}^{Y,N},\delta_{0}\right)\right)^{2}\right]\Delta t^{2}}_{J_{1,2}}  \notag \\
&~ +\underbrace{\mathbb{E} \left[\mathds{1}_{\Omega^{i}_{\mathfrak{R},k}}|Y_{t_{k}}^{i,N}|^{2\bar{p}-2} \cdot 2\bar{p}\cdot \bigg\langle Y_{t_{k}}^{i,N},f\left(t_{k},Y^{i,N}_{t_{k}},\rho_{t_{k}}^{Y,N}\right) \bigg\rangle  \right] \Delta t}_{J_{1,3}}\notag \\
&~ + \underbrace{\mathbb{E} 
\left[\mathds{1}_{\Omega^{i}_{\mathfrak{R},k}}|Y_{t_{k}}^{i,N}|^{2\bar{p}-2}\cdot \bar{p}(2\bar{p}-1)\sum_{j=1}^{m}\left|g_{j}\left(t_{k},Y^{i,N}_{t_{k}},
\rho_{t_{k}}^{Y,N} \right)\right|^{2}\right] \Delta t}_{J_{1,4}} \notag \\
&~+\mathbb{E}
\left[\mathds{1}_{\Omega^{i}_{\mathfrak{R},k}}|Y_{t_{k}}^{i,N}|^{2\bar{p}-2}\cdot \bar{p}(2\bar{p}-1)\int_{t_{k}}^{t_{k+1}}\int_{\mathcal{E}}\left|h\left(t_{k},Y^{i,N}_{t_{k}},
\rho_{t_{k}}^{Y,N},v \right)\right|^{2}\varphi(dv)ds\right].
\end{align}
Using the elementary inequality $
(\sum_{i=1}^{k_1}|a_i|)^{l_1} \leq k_1^{l_1-1} \sum_{i=1}^{k_1}|a_i|^{l_1}, \,  \forall \, l_1 >0, \, 
a_i \in \R, \, i=1, \dots, k_1, \, k_1 \in \mathbb{N}
$, Assumption \ref{ass:Gamma-control-conditions}, the polynomial growth for the coefficients in \eqref{ineq:growth-condition-of-a} and \eqref{eq:poly-growth-diffusion}, we can derive
\begin{align}  \label{ineq:I2}
  J_{2} \le &~C \sum_{\kappa=3}^{2\bar{p}} \mathbb{E}\left[\mathds{1}_{\Omega^{i}_{\mathfrak{R},k}}|Y_{t_{k}}^{i,N}|^{2\bar{p}-\kappa}\left|\Gamma_{1}\left(f\left(t_{k}, Y_{t_{k}}^{i,N},\rho_{t_{k}}^{Y,N}\right),\Delta t\right) \Delta t \right|^{\kappa}\right]  \notag \\
  &~+ C \sum_{\kappa=3}^{2\bar{p}} \mathbb{E}\left[\mathds{1}_{\Omega^{i}_{\mathfrak{R},k}}|Y_{t_{k}}^{i,N}|^{2\bar{p}-\kappa}\left|\sum_{j=1}^{m} \Gamma_{2}\left(g_{j}\left(t_{k},Y_{t_{k}}^{i,N},\rho_{t_{k}}^{Y,N}\right),\Delta t \right) \Delta{W_{j}^{i}}(t_{k})\right|^{\kappa}\right] \notag \\
  &~+ C \sum_{\kappa=3}^{2\bar{p}} \mathbb{E}\left[\mathds{1}_{\Omega^{i}_{\mathfrak{R},k}}|Y_{t_{k}}^{i,N}|^{2\bar{p}-\kappa}\left|\int_{t_{k}}^{t_{k+1}}\int_{\mathcal{E}}\Gamma_{3}\left(h\left(t_{k},Y_{t_{k}}^{i,N},\rho_{t_{k}}^{Y,N},v\right),\Delta t \right)\tilde{p}^{i}_{\varphi}(dv,ds)\right|^{\kappa}\right]  \notag \\
  \le &~ \underbrace{C \sum_{\kappa=3}^{2\bar{p}} \mathbb{E} \left[\mathds{1}_{\Omega^{i}_{\mathfrak{R},k}}|Y_{t_{k}}^{i,N}|^{2\bar{p}-\kappa}\left(1+\left|Y^{i,N}_{t_{k}}\right|^{\gamma+1}+\mathbb{W}_{2}\left(\rho_{t_{k}}^{Y,N},\delta_{0}\right)\right)^{\kappa}\right]\Delta t^{\kappa}}_{J_{2,1}}   \notag \\
   &~ + \underbrace{C \sum_{\kappa=3}^{2\bar{p}} \mathbb{E} \left[\mathds{1}_{\Omega^{i}_{\mathfrak{R},k}}|Y_{t_{k}}^{i,N}|^{2\bar{p}-\kappa}\left(1+\left|Y^{i,N}_{t_{k}}\right|^{\frac{\gamma}{2}+1}+\mathbb{W}_{2}\left(\rho_{t_{k}}^{Y,N},\delta_{0}\right)\right)^{\kappa}\right]\Delta t^{\frac{\kappa}{2}}}_{J_{2,2}} \notag \\
   &~+ C \sum_{\kappa=3}^{2\bar{p}} \mathbb{E}
\left[\mathds{1}_{\Omega^{i}_{\mathfrak{R},k}}|Y_{t_{k}}^{i,N}|^{2\bar{p}-\kappa}\int_{t_{k}}^{t_{k+1}}\int_{\mathcal{E}}\left|h\left(t_{k},Y^{i,N}_{t_{k}},
\rho_{t_{k}}^{Y,N},v \right)\right|^{\kappa}\varphi(dv)ds\right].
\end{align}
Coming \eqref{ineq:I1} with \eqref{ineq:I2}, and using the Young inequality and Assumption \ref{ass:Coercivity-condition-Chen}, we show
\begin{align}
    J_{1}+J_{2} \le &~ J_{1,1} + J_{1,2} + J_{2,1} +J_{2,2} + J_{1,3} + J_{1,4} \notag \\
 &~+  \mathbb{E}
\left[\mathds{1}_{\Omega^{i}_{\mathfrak{R},k}} \int_{t_{k}}^{t_{k+1}}(1+(2\bar{p}-2)\theta)\int_{\mathcal{E}}\left|h\left(t_{k},Y^{i,N}_{t_{k}},
\rho_{t_{k}}^{Y,N},v \right)\right|^{2\bar{p}}\varphi(dv)ds\right] \notag \\
&~+C\mathbb{E} \left[\mathds{1}_{\Omega^{i}_{\mathfrak{R},k}} \left|Y_{t_{k}}^{i,N}\right|^{2\bar{p}}\right] \Delta t \notag \\
\le &~ J_{1,1} + J_{1,2} + J_{2,1} + J_{2,2}
\notag \\
 &~+C\mathbb{E} \left[\mathds{1}_{\Omega^{i}_{\mathfrak{R},k}} \left|Y_{t_{k}}^{i,N}\right|^{2\bar{p}}\right] \Delta t +C\mathbb{E} \left[\mathds{1}_{\Omega^{i}_{\mathfrak{R},k}}\left(1+\left|Y_{t_{k}}^{i,N}\right|^{2\bar{p}}+\mathbb{W}_{2}^{2\bar{p}}(\rho_{t_{k}}^{Y,N},\delta_{0})\right)\right] \Delta t.
\end{align}

\noindent Further, by Lemma 4.2 of \cite{kumar2022well}, we have that for any $\hat{p} \geq 2$,
\begin{align} \label{eq:W_2^p}
\mathbb{W}_2^{\hat{p}}\left(\rho_{t_k}^{Y, N}, \delta_0\right)=\mathbb{W}_2^{\hat{p}}\left(\frac{1}{N} \sum_{i=1}^N \delta_{Y_{t_{k}}^{i, N}}, \delta_0\right) \le \frac{1}{N} \sum_{i=1}^N\left|Y_{t_{k}}^{i, N}\right|^{\hat{p}}.
\end{align}
From the above estimations, we thus arrive at
\begin{align}
    &~ \sup_{i\in \mathcal{I}_{N}} \mathbb{E}\left[1_{\Omega^{i}_{\mathfrak{R},k+1}}|Y_{t_{k+1}}^{i,N}|^{2\bar{p}}\right]  \notag \\
    \le &~  C \Delta t +  (1+ C \Delta t) \sup_{i\in \mathcal{I}_{N}} \mathbb{E} \left[\mathds{1}_{\Omega^{i}_{\mathfrak{R},k}}|Y_{t_{k}}^{i,N}|^{2\bar{p}}\right] + C \sup_{i\in \mathcal{I}_{N}} \mathbb{E} \left[\mathds{1}_{\Omega^{i}_{\mathfrak{R},k}}|Y_{t_{k}}^{i,N}|^{2\bar{p}-1+\hat{\gamma}(\gamma+1)}\right] \Delta t^{1+\hat{\delta}}   \notag  \\
    &~+ C \sum_{\kappa=2}^{2\bar{p}}  \sup_{i\in \mathcal{I}_{N}} \mathbb{E} \left[\mathds{1}_{\Omega^{i}_{\mathfrak{R},k}}|Y_{t_{k}}^{i,N}|^{2\bar{p}+\gamma \kappa}\right] \Delta t^{\kappa}     
    + C \sum_{\kappa=3}^{2\bar{p}}  \sup_{i\in \mathcal{I}_{N}} \mathbb{E} \left[\mathds{1}_{\Omega^{i}_{\mathfrak{R},k}}|Y_{t_{k}}^{i,N}|^{2\bar{p}+\frac{\gamma \kappa}{2}}\right] \Delta t^{\frac{\kappa}{2}}.    
\end{align}
Choosing $\mathfrak{R}=\mathfrak{R}(\Delta t) = \Delta t^{-1/G(\gamma,\hat{\gamma},\hat{\delta})}$ with $G(\gamma,\hat{\gamma},\hat{\delta})=\frac{\hat{\gamma}(\gamma+1)-1}{\hat{\delta}}\vee 3\gamma $, we get 
\begin{align*}
 \mathds{1}_{\Omega^{i}_{\mathfrak{R},k}}|Y_{t_{k}}^{i,N}|^{2\bar{p}-1+\hat{\gamma}(\gamma+1)} \Delta t^{1+\hat{\delta}} = \mathds{1}_{\Omega^{i}_{\mathfrak{R},k}}|Y_{t_{k}}^{i,N}|^{2\bar{p}} \Delta t \left(\mathds{1}_{\Omega^{i}_{\mathfrak{R},k}}|Y_{t_{k}}^{i,N}|^{\hat{\gamma}(\gamma+1)-1} \Delta t ^{\hat{\delta}}\right) \le  C\mathds{1}_{\Omega^{i}_{\mathfrak{R},k}}|Y_{t_{k}}^{i,N}|^{2\bar{p}} \Delta t,   \\
 \mathds{1}_{\Omega^{i}_{\mathfrak{R},k}}|Y_{t_{k}}^{i,N}|^{2\bar{p}+\gamma \kappa} \Delta t^{\kappa} = \mathds{1}_{\Omega^{i}_{\mathfrak{R},k}}|Y_{t_{k}}^{i,N}|^{2\bar{p}} \Delta t \left(\mathds{1}_{\Omega^{i}_{\mathfrak{R},k}}|Y_{t_{k}}^{i,N}|^{\gamma \kappa} \Delta t^{\kappa-1} \right) \le C \mathds{1}_{\Omega^{i}_{\mathfrak{R},k}}|Y_{t_{k}}^{i,N}|^{2\bar{p}} \Delta t, \quad \kappa =2,\cdots,2\bar{p},  \\
 \mathds{1}_{\Omega^{i}_{\mathfrak{R},k}}|Y_{t_{k}}^{i,N}|^{2\bar{p}+\frac{\gamma \kappa}{2}} \Delta t^{\frac{\kappa}{2}}= \mathds{1}_{\Omega^{i}_{\mathfrak{R},k}}|Y_{t_{k}}^{i,N}|^{2\bar{p}} \Delta t \left( \mathds{1}_{\Omega^{i}_{\mathfrak{R},k}}|Y_{t_{k}}^{i,N}|^{\frac{\gamma \kappa}{2}} \Delta t^{\frac{\kappa}{2}-1}\right)  \le C \mathds{1}_{\Omega^{i}_{\mathfrak{R},k}}|Y_{t_{k}}^{i,N}|^{2\bar{p}} \Delta t, \quad \kappa=3,\cdots,2\bar{p}.
\end{align*}
From the above estimations, it follows that
\begin{align*}
    \sup_{i\in\mathcal{I}_{N}} \mathbb{E}\left[\mathds{1}_{\Omega^{i}_{\mathfrak{R},k+1}}|Y_{t_{k+1}}^{i,N}|^{2\bar{p}}\right] \le &~  C \Delta t +  (1+ C \Delta t) \sup_{i\in\mathcal{I}_{N}} \mathbb{E} \left[\mathds{1}_{\Omega^{i}_{\mathfrak{R},k}}|Y_{t_{k}}^{i,N}|^{2\bar{p}}\right].
\end{align*}
According to the Gronwall inequality, we have 
\begin{align} \label{ineq:Moments-bounded-of-sets}
    \sup_{i\in\mathcal{I}_{N}} \mathbb{E}\left[\mathds{1}_{\Omega^{i}_{\mathfrak{R},k+1}}|Y_{t_{k+1}}^{i,N}|^{2\bar{p}}\right] \le &~  C \left(1+\mathbb{E}\left[|Y_{0}^{i,N}|^{2\bar{p}}\right]\right).
\end{align}
In the sequel, we focus on the estimate of 
$\mathbb{E}\left[\mathds{1}_{\Omega_{\mathfrak{R},k}^{i,c}}\left|Y_{t_{k}}^{i,N}\right|^{2p}\right]$. We first infer from \eqref{eq:Modified-Euler-method} that
\begin{align*}
 \left|Y_{t_{k+1}}^{i,N}\right| \le &~ \left|Y_{0}^{i,N}\right| + \sum_{j_{1}=0}^{k} \left|\Gamma_{1}\left(f\left(t_{j_{1}}, Y_{t_{j_{1}}}^{i,N},\rho_{t_{j_{1}}}^{Y,N}\right),\Delta t\right) \Delta t \right|  \notag \\
 &~+ \sum_{j_{1}=0}^{k} \left|\sum_{j=1}^{m} \Gamma_{2}\left(g_{j}\left(t_{j_{1}},Y_{t_{j_{1}}}^{i,N},\rho_{t_{j_{1}}}^{Y,N}\right),\Delta t \right) \Delta{W_{j}^{i}}(t_{j_{1}})\right| \notag \\
    &~+ \sum_{j_{1}=0}^{k}\left|\int_{t_{j_{1}}}^{t_{j_{1}+1}}\int_{\mathcal{E}}\Gamma_{3}\left(h\left(t_{j_{1}},Y_{t_{j_{1}}}^{i,N},\rho_{t_{j_{1}}}^{Y,N},v\right),\Delta t \right)\tilde{p}^{i}_{\varphi}(dv,ds)\right|. 
\end{align*}
In view of Assumption \ref{ass:Gamma-control-conditions}, we obtain
\begin{align*} 
\mathbb{E}\left[\left|Y_{t_{k+1}}^{i,N}\right|^{2\bar{p}} \right]
  \le &~C\mathbb{E}\left[\left|Y_{0}^{i,N}\right|^{2\bar{p}}\right] + C\Delta t^{-2\bar{p}\alpha_{1}} + C \Delta t^{-2\bar{p}\left(\alpha_{2}+\frac{1}{2}\right)}+C\Delta t^{-2\bar{p}\left(\alpha_{3}+1\right)}  \notag \\
  \le &~C\mathbb{E}\left[\left|Y_{0}^{i,N}\right|^{2\bar{p}}\right]+C\Delta t^{-2\bar{p}\bar{\alpha}},
\end{align*}
where $\bar{\alpha} = \max\{\alpha_{1},\alpha_{2}+\frac{1}{2},\alpha_{3}+1\}$. 
Therefore, we can conclude that
\begin{align} \label{ineq:high order moment of numerical solution}
  \mathbb{E}\left[\left|Y_{t_{k}}^{i,N}\right|^{2\bar{p}} \right]
  \le ~C\mathbb{E}\left[\left|Y_{0}^{i,N}\right|^{2\bar{p}}\right]+C\Delta t^{-2\bar{p}\bar{\alpha}}.
\end{align}

\noindent Before proceeding further with the estimate of 
$\mathbb{E}\left[\mathds{1}_{\Omega_{\mathfrak{R},k}^{i,c}}\left|Y_{t_{k}}^{i,N}\right|^{2p}\right]$, 
 we notice that,
\begin{align}  \label{Complementary-iterative-formula}
    \mathds{1}_{\Omega_{\mathfrak{R},k}^{i,c}}=&~1-\mathds{1}_{\Omega^{i}_{\mathfrak{R},k}}=1-\mathds{1}_{\Omega^{i}_{\mathfrak{R},k-1}}\mathds{1}_{\big|Y_{t_{k}}^{i,N}\big|\le \mathfrak{R}(\Delta t)} = \mathds{1}_{\Omega_{\mathfrak{R},k-1}^{i,c}}+\mathds{1}_{\Omega^{i}_{\mathfrak{R},k-1}}\mathds{1}_{\left|Y_{t_{k}}^{i,N}\right|> \mathfrak{R}} \notag \\
    =&~\sum_{j_{1}=0}^{k}\mathds{1}_{\Omega^{i}_{\mathfrak{R},j_{1}-1}}\mathds{1}_{\big|Y_{t_{j_{1}}}^{i,N}\big|> \mathfrak{R}},
\end{align}
where $\mathds{1}_{\Omega^{i}_{\mathfrak{R},-1}}=1$. 

Using \eqref{Complementary-iterative-formula}, the $\rm H\ddot{o}lder$ inequality with $\frac{1}{p'}+\frac{1}{q'}=1$ and the Chebyshev inequality, we achieve that for $p \geq 1$,
\begin{align}   \label{ineq:Moment-Bounded-Estimation-of-Set-Complements}
\mathbb{E}\left[\mathds{1}_{\Omega_{\mathfrak{R},k}^{i,c}}\left|Y_{t_{k}}^{i,N}\right|^{2p}\right] 
    = & \sum_{j_{1}=0}^{k} \mathbb{E} \left[\mathds{1}_{\Omega^{i}_{\mathfrak{R},j_{1}-1}}\mathds{1}_{|Y_{t_{j_{1}}}^{i,N}|> \mathfrak{R}} \left|Y_{t_{k}}^{i,N}\right|^{2p}\right] \notag \\
    \le &~\sum_{j_{1}=0}^{k} \left(\mathbb{E}\left[\left|Y_{t_{k}}^{i,N}\right|^{2p\cdot p'}\right]\right)^{\frac{1}{p'}}\left(\mathbb{E}\left[\mathds{1}_{\Omega^{i}_{\mathfrak{R},j_{1}-1}}\mathds{1}_{|Y_{t_{j_{1}}}^{i,N}|> \mathfrak{R}}\right]\right)^{\frac{1}{q'}}  \notag \\
    =&~\left(\mathbb{E}\left[\left|Y_{t_{k}}^{i,N}\right|^{2p\cdot p'}\right]\right)^{\frac{1}{p'}} \sum_{j_{1}=0}^{k} \left(\mathbb{P}\left[\mathds{1}_{\Omega^{i}_{\mathfrak{R},j_{1}-1}}|Y_{t_{j_{1}}}^{i,N}|> \mathfrak{R}\right]\right)^{\frac{1}{q'}} \notag \\
    \le &~\left(\mathbb{E}\left[\left|Y_{t_{k}}^{i,N}\right|^{2p\cdot p'}\right]\right)^{\frac{1}{p'}} \sum_{j_{1}=0}^{k} \frac{\left(\mathbb{E}\left[\mathds{1}_{\Omega^{i}_{\mathfrak{R},j_{1}-1}}\left|Y_{t_{j_{1}}}^{i,N}\right|^{2\bar{p}}\right]\right)^{\frac{1}{q'}}}{\left(\mathfrak{R}\right)^{2\bar{p}/q'}},
\end{align}
where $q'=\frac{2\bar{p}}{(2\bar{\alpha} p+1)G(\gamma,\hat{\gamma},\hat{\delta})} >1$, as $p \leq \frac{2\bar{p}-G(\gamma,\hat{\gamma},\hat{\delta})}{2+2\bar{\alpha} G(\gamma,\hat{\gamma},\hat{\delta})}$.

Since $p \leq \frac{2\bar{p}-G(\gamma,\hat{\gamma},\hat{\delta})}{2+2\bar{\alpha} G(\gamma,\hat{\gamma},\hat{\delta})}$, it follows that $pp'\le \bar{p}$. 
Applying the $\rm H\ddot{o}lder$ inequality and \eqref{ineq:high order moment of numerical solution}, we obtain
\begin{align} \label{ineq:PP'-th moment of Y_{tn}}
\left(\mathbb{E}\left[\left|Y_{t_{k}}^{i,N}\right|^{2p\cdot p'}\right]\right)^{\frac{1}{p'}} \le &~\left(\mathbb{E}\left[\left|Y_{t_{k}}^{i,N}\right|^{2p\cdot p'\cdot \frac{\bar{p}}{pp'}}\right]\right)^{\frac{pp'}{\bar{p}}\frac{1}{p'}}\notag \\
  \le &~C\left(1+\mathbb{E}\left[\left|Y_{0}^{i,N}\right|^{2\bar{p}}\right]\right)^{\frac{p}{\bar{p}}} + C\Delta t^{-2\bar{\alpha} p}.
\end{align}
Substituting \eqref{ineq:PP'-th moment of Y_{tn}} and \eqref{ineq:Moments-bounded-of-sets} into \eqref{ineq:Moment-Bounded-Estimation-of-Set-Complements}, with $\mathfrak{R}(\Delta t) = \Delta t^{-1/G(\gamma,\hat{\gamma},\hat{\delta})}$ leads to
\begin{align}  \label{ineq:Bounded-moment-estimates-of-complements}
    &\mathbb{E}\left[\mathds{1}_{\Omega_{\mathfrak{R},k}^{i,c}}\left|Y_{t_{k}}^{i,N}\right|^{2p}\right] \notag \\
    \le &~C(k+1)\Delta t^{2\bar{\alpha} p+1}\left(C\Delta t^{-2\bar{\alpha} p}+C\left(1+\mathbb{E}\left[\left|Y_{0}^{i,N}\right|^{2\bar{p}}\right]\right)^{\frac{p}{\bar{p}}}\right)\cdot \left[C\left(1+\mathbb{E}\left[\left|Y_{0}^{i,N}\right|^{2\bar{p}}\right]\right)\right]^{\frac{1}{q'}} \notag \\
    \le&~ C\left(1+\mathbb{E}\left[\left|Y_{0}^{i,N}\right|^{2\bar{p}}\right]\right)^{\frac{p}{\bar{p}}+\frac{1}{q'}}.
\end{align}
Combining the $\rm H\ddot{o}lder$ inequality, \eqref{ineq:Moments-bounded-of-sets} and \eqref{ineq:Bounded-moment-estimates-of-complements}, we show
\begin{align*}
   \sup_{i\in\mathcal{I}_{N}}\mathbb{E}\left[|Y_{t_{k}}^{i,N}|^{2p}\right] 
    =&~\sup_{i\in\mathcal{I}_{N}}\mathbb{E}\left[\mathds{1}_{\Omega_{\mathfrak{R},k}}\left|Y_{t_{k}}^{i,N}\right|^{2p}\right]+\sup_{i\in\mathcal{I}_{N}}\mathbb{E}\left[\mathds{1}_{\Omega_{\mathfrak{R},k}^{c}}\left|Y_{t_{k}}^{i,N}\right|^{2p}\right]  \\
    \le &~\left(\sup_{i\in\mathcal{I}_{N}}\mathbb{E}\left[\mathds{1}_{\Omega_{\mathfrak{R},k}}\left|Y_{t_{k}}^{i,N}\right|^{2\bar{p}}\right]\right)^{\frac{p}{\bar{p}}}+\sup_{i\in\mathcal{I}_{N}}\mathbb{E}\left[\mathds{1}_{\Omega_{\mathfrak{R},k}^{c}}\left|Y_{t_{k}}^{i,N}\right|^{2p}\right]   \\
    \le &~C\left(1+\mathbb{E}\left[\left|Y_{0}^{i,N}\right|^{2\bar{p}}\right]\right)^{\frac{p}{\bar{p}}} + C\left(1+\mathbb{E}\left[\left|Y_{0}^{i,N}\right|^{2\bar{p}}\right]\right)^{\frac{p}{\bar{p}}+\frac{1}{q'}} \\
    \le &~C\left(1+\left(\mathbb{E}\left[|Y_{0}^{i,N}|^{2\bar{p} }\right]\right)^{\beta}\right).
\end{align*}
where $\beta >0 $. Then, by the $\rm H\ddot{o}lder$ inequality, \eqref{Moments-numer-bound-stro-con} is shown to hold for non-integer values of $p \geq 1$, thereby completing the proof.
\qed

\section{Proof of Theorem \ref{thm:con-result-particle-scheme}}
\label{sec:proof}
\renewcommand{\thelemma}{B.\arabic{lemma}}  
\setcounter{lemma}{0} 
Now, we prove the convergence rate for the Euler-type schemes \eqref{eq:continuous-Modified-Euler-method} in Theorem \ref{thm:con-result-particle-scheme}. The following lemma reveals the strong error estimates between $Y^{i,N}(t)$ and $Y^{i,N}\left(\tau_{n}(t)\right)$.

\begin{lemma}
\label{lem:estimate-of-difference-of-numerical-solution}
Under the same assumptions as Lemma \ref{lem:moment-bounds-of-numerical-solution}, the following estimate holds for the Euler-type schemes \eqref{eq:continuous-Modified-Euler-method}
\begin{align*}
     \sup_{i\in\mathcal{I}_{N}} \sup_{t\in[0,\mathcal{T}]} \mathbb{E} \left[\left|Y^{i,N}(t)-Y^{i,N}\left(\tau_{n}(t)\right)\right|^{2p}\right]  \le C \Delta t \left(1+\left(\mathbb{E}\left[\left|Y_{0}^{i,N}\right|^{2\bar{p}}\right]\right)^{\beta}\right), \quad p\in \big[1,\frac{2\bar{p}-G}{\big(2+2\bar{\alpha} G\big)\big(\gamma+1\big)}\big],
\end{align*}
where $\beta$, $G$ and $\bar{\alpha}$ are from Lemma \ref{lem:moment-bounds-of-numerical-solution} and $\bar{p}$ comes from Assumption \ref{ass:Coercivity-condition-Chen} satisfying $\bar{p}\ge \gamma+1 +G(\bar{\alpha}\gamma+\bar{\alpha}+\frac{1}{2}) $. Additionally, $\gamma$ comes from Assumption \ref{ass:poly-growth-coeff-a}.
\end{lemma}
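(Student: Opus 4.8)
\emph{Sketch of proof.}
The plan is to estimate the one-step increment of the continuous-time scheme \eqref{eq:continuous-Modified-Euler-method} directly, using that on $[\tau_n(t),t]$ every integrand is frozen at the grid point $t_k:=\tau_n(t)$. First I would write
\begin{align*}
Y^{i,N}(t)-Y^{i,N}(\tau_n(t))
&= \int_{\tau_n(t)}^{t}\Gamma_{1}\left(f(t_k,Y^{i,N}_{t_k},\rho^{Y,N}_{t_k}),\Delta t\right)dr \\
&\quad + \sum_{j=1}^{m}\int_{\tau_n(t)}^{t}\Gamma_{2}\left(g_{j}(t_k,Y^{i,N}_{t_k},\rho^{Y,N}_{t_k}),\Delta t\right)dW_j^i(r) \\
&\quad + \int_{\tau_n(t)}^{t}\int_{\mathcal{E}}\Gamma_{3}\left(h(t_k,Y^{i,N}_{t_k},\rho^{Y,N}_{t_k},v),\Delta t\right)\tilde{p}^i_\varphi(dv,dr),
\end{align*}
apply the elementary inequality $|a+b+c|^{2p}\le C(|a|^{2p}+|b|^{2p}+|c|^{2p})$, and invoke the pointwise bound $|\Gamma_{l}(F_l^{\mathfrak{i}}(\cdot),\Delta t)|\le |F_l^{\mathfrak{i}}(\cdot)|$ from Assumption \ref{ass:Gamma-control-conditions} to replace $\Gamma_1,\Gamma_2,\Gamma_3$ by $f,g_j,h$ in the three resulting expectations.

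For the drift term, Jensen's inequality over an interval of length at most $\Delta t$ produces a factor $\Delta t^{2p-1}\int_{\tau_n(t)}^{t}|f|^{2p}dr$; combining the polynomial growth \eqref{ineq:growth-condition-of-a}, the empirical-measure estimate \eqref{eq:W_2^p}, and Lemma \ref{lem:moment-bounds-of-numerical-solution} applied with exponent $2p(\gamma+1)$ gives a contribution of order $\Delta t^{2p}(1+(\mathbb{E}[|Y_0^{i,N}|^{2\bar p}])^{\beta})$. For the diffusion term, the Burkholder--Davis--Gundy inequality reduces it to $\mathbb{E}[(\int_{\tau_n(t)}^{t}|g|^{2}dr)^{p}]$, which, the integrand being frozen, is of order $\Delta t^{p}\,\mathbb{E}[|g(t_k,Y^{i,N}_{t_k},\rho^{Y,N}_{t_k})|^{2p}]$; the growth \eqref{eq:poly-growth-diffusion}, estimate \eqref{eq:W_2^p} and Lemma \ref{lem:moment-bounds-of-numerical-solution} (now with exponent $p(\gamma+2)$) then bound this by $\Delta t^{p}(1+(\mathbb{E}[|Y_0^{i,N}|^{2\bar p}])^{\beta})$. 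Since $p\ge 1$ and $\Delta t$ is bounded, both terms are $\le C\Delta t(1+(\mathbb{E}[|Y_0^{i,N}|^{2\bar p}])^{\beta})$.

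For the jump term I would use Kunita's first (Bichteler--Jacod) inequality for compensated Poisson integrals, which bounds the $2p$-th moment by $C\,\mathbb{E}[(\int_{\tau_n(t)}^{t}\int_{\mathcal{E}}|h|^{2}\varphi(dv)dr)^{p}]+C\,\mathbb{E}[\int_{\tau_n(t)}^{t}\int_{\mathcal{E}}|h|^{2p}\varphi(dv)dr]$. With $\varphi(\mathcal{E})<\infty$ and the frozen integrand, the first summand is of order $\Delta t^{p}$ and the second of order $\Delta t$; applying the growth estimate \eqref{ineq:growth-condition-of-c} with $q=2$ and $q=2p$ respectively, together with \eqref{eq:W_2^p} and Lemma \ref{lem:moment-bounds-of-numerical-solution}, both become $C\Delta t(1+(\mathbb{E}[|Y_0^{i,N}|^{2\bar p}])^{\beta})$. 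It is precisely the second, purely-jump summand that cannot be improved beyond $O(\Delta t)$, consistent with the fact that $l$-th moments of Poisson increments contribute only $O(\Delta t)$, so the one-step error is $O(\Delta t)$ rather than $O(\Delta t^{p})$.

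Summing the three contributions and taking the supremum over $i\in\mathcal{I}_N$ and $t\in[0,\mathcal{T}]$ yields the claim. The principal obstacle, and the source of the restricted range of $p$ and of the hypothesis $\bar p\ge \gamma+1+G(\bar\alpha\gamma+\bar\alpha+\tfrac12)$, is the bookkeeping of moment exponents: the drift term forces control of $\mathbb{E}[|Y^{i,N}_{t_k}|^{2p(\gamma+1)}]$, which Lemma \ref{lem:moment-bounds-of-numerical-solution} provides exactly when $p(\gamma+1)\le \tfrac{2\bar p-G}{2+2\bar\alpha G}$, i.e. $p\le \tfrac{2\bar p-G}{(2+2\bar\alpha G)(\gamma+1)}$; meanwhile the stated lower bound on $\bar p$ is equivalent to $\tfrac{2\bar p-G}{2+2\bar\alpha G}\ge \gamma+1$, which makes this range nonempty (it contains $p=1$) and simultaneously ensures that the slightly larger exponents $p(\gamma+2)$ and $\gamma+2p$ arising in the diffusion and jump bounds are admissible as well. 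Everything else is a routine use of Young's and H\"older's inequalities.
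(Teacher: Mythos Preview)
Your proposal is correct and follows essentially the same route as the paper's proof: split the increment into drift, diffusion and jump parts, apply H\"older/Jensen and BDG/Kunita inequalities, bound $\Gamma_l$ by the original coefficients via Assumption \ref{ass:Gamma-control-conditions}, and then invoke the polynomial growth estimates \eqref{ineq:growth-condition-of-a}, \eqref{eq:poly-growth-diffusion}, \eqref{ineq:growth-condition-of-c} together with \eqref{eq:W_2^p} and Lemma \ref{lem:moment-bounds-of-numerical-solution}. One small slip in your bookkeeping paragraph: the diffusion and jump moment exponents $p(\gamma+2)$ and $\gamma+2p$ are \emph{smaller} than the drift exponent $2p(\gamma+1)$ (not ``slightly larger''), so their admissibility is automatic once the drift constraint $p(\gamma+1)\le \tfrac{2\bar p-G}{2+2\bar\alpha G}$ is satisfied.
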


\begin{proof}
By applying the elementary inequality, the $\rm H\ddot{o}lder $ inequality, and the Burkholder-Davis-Gundy inequality, we obtain
\begin{align*}
    &\mathbb{E} \left[\left|Y^{i,N}(t)-Y^{i,N}\left(\tau_{n}(t)\right)\right|^{2p}\right]  \\
    \le &~ C\big(t-\tau_{n}(t)\big)^{2p-1} \mathbb{E} \left[\int_{\tau_{n}(t)}^{t}\left|\Gamma_{1}\left(f\left(\tau_{n}(s), Y^{i,N}\left(\tau_{n}(s)\right),\rho^{Y,N}_{\tau_{n}(s)}\right),\Delta t\right) \right|^{2p}ds\right]  \notag \\
    &~+ C\big(t-\tau_{n}(t)\big)^{p-1} \mathbb{E} \left[\sum_{j=1}^{m}\int_{\tau_{n}(t)}^{t}\left|\Gamma_{2}\left(g_{j}\left(\tau_{n}(s), Y^{i,N}\left(\tau_{n}(s)\right),\rho^{Y,N}_{\tau_{n}(s)}\right),\Delta t\right) \right|^{2p}ds\right] \notag \\
    &~+ C \mathbb{E} \left[\int_{\tau_{n}(t)}^{t}\int_{\mathcal{E}}\left|\Gamma_{3}\left(h\left(\tau_{n}(s), Y^{i,N}\left(\tau_{n}(s)\right),\rho^{Y,N}_{\tau_{n}(s)},v\right),\Delta t\right) \right|^{2p}\varphi(dv) ds \right].
\end{align*}
One can use Assumption \ref{ass:Gamma-control-conditions} and the polynomial growth condition for the coefficients $f$, $g$ and $h$ in \eqref{ineq:growth-condition-of-a}, \eqref{eq:poly-growth-diffusion} and \eqref{ineq:growth-condition-of-c} to acquire
\begin{align*}
    &\mathbb{E} \left[\left|Y^{i,N}\left(t\right)-Y^{i,N}\left(\tau_{n}(t)\right)\right|^{2p}\right] \notag \\ 
    \le &~ C\Delta t^{2p-1} \mathbb{E} \left[\int_{\tau_{n}(t)}^{t}\left(1+\left|Y^{i,N}\left(\tau_{n}(s)\right)\right|^{2p(\gamma+1)}+\mathbb{W}_{2}^{2p}\left(\rho^{Y,N}_{\tau_{n}(s)},\delta_{0}\right)\right) ds\right]  \notag \\
    &~+ C \Delta t^{p-1} \mathbb{E} \left[\int_{\tau_{n}(t)}^{t}\left(1+\left|Y^{i,N}\left(\tau_{n}(s)\right)\right|^{2p(\frac{\gamma}{2}+1)}+\mathbb{W}_{2}^{2p}\left(\rho^{Y,N}_{\tau_{n}(s)},\delta_{0}\right)\right)ds\right] \notag \\
    &~+ C \mathbb{E} \left[\int_{\tau_{n}(t)}^{t}\left(1+\left|Y^{i,N}\left(\tau_{n}(s)\right)\right|^{2p+\gamma}+\mathbb{W}_{2}^{2p}\left(\rho^{Y,N}_{\tau_{n}(s)},\delta_{0}\right)\right)ds\right].
\end{align*}
In view of \eqref{eq:W_2^p}, one can get
\begin{align}
\label{eq:time-inter-num-2p-est}
    \mathbb{E} \left[\left|Y^{i,N}\left(t\right)-Y^{i,N}\left(\tau_{n}(t)\right)\right|^{2p}\right] 
    \le &~ C\Delta t^{2p} \left(1+\sup_{s\in[\tau_{n}(t),t]} \sup_{i\in\mathcal{I}_{N}}\mathbb{E}\left[\left|Y^{i,N}\left(\tau_{n}(s)\right)\right|^{2p(\gamma+1)}\right]\right)  \notag \\
    &~+ C \Delta t^{p} \left(1+\sup_{s\in[\tau_{n}(t),t]} \sup_{i\in\mathcal{I}_{N}}\mathbb{E}\left[\left|Y^{i,N}\left(\tau_{n}(s)\right)\right|^{2p(\frac{\gamma}{2}+1)}\right]\right) \notag \\
    &~+ C \Delta t \left(1+\sup_{s\in[\tau_{n}(t),t]} \sup_{i\in\mathcal{I}_{N}}\mathbb{E}\left[\left|Y^{i,N}\left(\tau_{n}(s)\right)\right|^{2p+\gamma}\right]\right).
\end{align}
In light of Lemma \ref{lem:moment-bounds-of-numerical-solution}, for all $p\in \big[1,\frac{2\bar{p}-G}{\big(2+2\bar{\alpha} G\big)\big(\gamma+1\big)}\big]$, it follows that
\begin{align*}
  \sup_{i\in\mathcal{I}_{N}} \sup_{t\in[0,\mathcal{T}]} \mathbb{E} \left[\left|Y^{i,N}(t)-Y^{i,N}\left(\tau_{n}(t)\right)\right|^{2p}\right] \le C \Delta t \left(1+\left(\mathbb{E}\left[\left|Y_{0}^{i,N}\right|^{2\bar{p}}\right]\right)^{\beta}\right) .
\end{align*}
\end{proof}

The following lemma states that the processes 
$\{Y^{i,N}(t)\}_{t \in [0,\mathcal{T}]}$, generated by \eqref{eq:continuous-Modified-Euler-method}, possess bounded moments.

\begin{lemma} \label{lem:moment-bounds-continuous-numerical-solution}
Under the same conditions as Lemma \ref{lem:estimate-of-difference-of-numerical-solution}, there exist $C$, $\beta>0$ such that the Euler-type schemes \eqref{eq:continuous-Modified-Euler-method} satisfies
    \begin{align*}
        \sup_{i\in\mathcal{I}_{N}}\sup_{t\in[0,\mathcal{T}]} \mathbb{E} \left[\left|Y^{i,N}(t)\right|^{2p}\right] \le C \left(1+\left(\mathbb{E}\left[\left|Y_{0}^{i,N}\right|^{2\bar{p}}\right]\right)^{\beta}\right), \quad p\in \left[1,\frac{2\bar{p}-G}{\big(2+2\bar{\alpha} G\big)\big(\gamma+1\big)}\right],
    \end{align*}
where $G$ and $\bar{\alpha}$ come from Lemma \ref{lem:moment-bounds-of-numerical-solution} and $\bar{p}$ is from Assumption \ref{ass:Coercivity-condition-Chen} satisfying $\bar{p}\ge \gamma+1 +G(\bar{\alpha}\gamma+\bar{\alpha}+\frac{1}{2})$.
\end{lemma}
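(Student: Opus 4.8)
The plan is to deduce the bound for the continuous-time interpolation $Y^{i,N}(t)$ directly from the two preceding results: the grid-point moment bound of Lemma~\ref{lem:moment-bounds-of-numerical-solution} and the one-step error estimate of Lemma~\ref{lem:estimate-of-difference-of-numerical-solution}. The key observation is that for every $t\in[0,\mathcal{T}]$ the point $\tau_{n}(t)$ coincides with one of the grid points $t_{k}$, so we may split
\[
Y^{i,N}(t) = Y^{i,N}\!\left(\tau_{n}(t)\right) + \left(Y^{i,N}(t)-Y^{i,N}\!\left(\tau_{n}(t)\right)\right),
\]
and estimate the two pieces separately.

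First I would apply the elementary inequality $|a+b|^{2p}\le 2^{2p-1}\big(|a|^{2p}+|b|^{2p}\big)$ to obtain, for all $i\in\mathcal{I}_{N}$ and $t\in[0,\mathcal{T}]$,
\[
\mathbb{E}\big[|Y^{i,N}(t)|^{2p}\big] \le 2^{2p-1}\,\mathbb{E}\big[|Y^{i,N}(\tau_{n}(t))|^{2p}\big] + 2^{2p-1}\,\mathbb{E}\big[|Y^{i,N}(t)-Y^{i,N}(\tau_{n}(t))|^{2p}\big].
\]
Since $\tau_{n}(t)\in\{t_{0},\dots,t_{n}\}$, the first term is bounded by $C\big(1+(\mathbb{E}[|Y_{0}^{i,N}|^{2\bar p}])^{\beta}\big)$ via Lemma~\ref{lem:moment-bounds-of-numerical-solution}. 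The second term is bounded by $C\Delta t\big(1+(\mathbb{E}[|Y_{0}^{i,N}|^{2\bar p}])^{\beta}\big)\le C\big(1+(\mathbb{E}[|Y_{0}^{i,N}|^{2\bar p}])^{\beta}\big)$ via Lemma~\ref{lem:estimate-of-difference-of-numerical-solution}, using $\Delta t\le\mathcal{T}$. Taking $\sup_{i\in\mathcal{I}_{N}}\sup_{t\in[0,\mathcal{T}]}$ on both sides then yields the claim.

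The only point requiring genuine care -- and the mild ``obstacle'' here -- is the bookkeeping of parameter ranges and of the standing hypothesis on $\bar p$. The range $p\in[1,\tfrac{2\bar p-G}{(2+2\bar\alpha G)(\gamma+1)}]$ asserted in the lemma is exactly the range of Lemma~\ref{lem:estimate-of-difference-of-numerical-solution}, and since $\gamma+1\ge 1$ it is contained in the range $[1,\tfrac{2\bar p-G}{2+2\bar\alpha G}]$ of Lemma~\ref{lem:moment-bounds-of-numerical-solution}, so the first-term estimate is legitimate. Likewise the condition $\bar p\ge \gamma+1+G(\bar\alpha\gamma+\bar\alpha+\tfrac12)$ assumed here is stronger than $\bar p\ge 1+(\bar\alpha+\tfrac12)G$ required in Lemma~\ref{lem:moment-bounds-of-numerical-solution} (because $\gamma\ge 0$ and $G\bar\alpha\gamma\ge 0$), so both invoked lemmas apply under the present hypotheses; one also takes $\beta$ to be the larger of the two exponents appearing there so that a single $\beta$ works. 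With these verifications the proof is complete and no new estimate is needed. Alternatively one could reprove the bound from scratch by repeating the martingale/Burkholder--Davis--Gundy argument of Lemma~\ref{lem:moment-bounds-of-numerical-solution} directly on the interval $[0,t]$, but the decomposition above is shorter and reuses work already done.
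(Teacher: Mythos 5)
Your proposal is correct and takes essentially the same approach as the paper: the paper likewise writes the single chain of inequalities that splits $Y^{i,N}(t)$ into $Y^{i,N}(\tau_n(t))$ plus the increment, then invokes Lemma~\ref{lem:moment-bounds-of-numerical-solution} and Lemma~\ref{lem:estimate-of-difference-of-numerical-solution} for the two pieces. Your extra bookkeeping (checking the range inclusion $[1,\tfrac{2\bar p-G}{(2+2\bar\alpha G)(\gamma+1)}]\subset[1,\tfrac{2\bar p-G}{2+2\bar\alpha G}]$ and the implication between the two lower bounds on $\bar p$) is accurate and is in fact more explicit than the paper's one-line proof.
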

\begin{proof}
By utilizing Lemma \ref{lem:moment-bounds-of-numerical-solution} and Lemma \ref{lem:estimate-of-difference-of-numerical-solution}, for all $p\in \big[1,\frac{2\bar{p}-G}{\big(2+2\bar{\alpha} G\big)\big(\gamma+1\big)}\big]$, we reach the desired conclusion that 
\begin{equation*}
\begin{aligned}
 \sup_{i\in\mathcal{I}_{N}} \sup_{t \in [0, \mathcal{T}]} \mathbb{E} \left[\left|Y^{i, N}(t) \right|^{2 p} \right] 
& \leq  C \sup_{i\in\mathcal{I}_{N}} \sup_{t \in [0, \mathcal{T}]}  \mathbb{E} \left[\left|Y^{i, N}(t) - Y^{i, N}\left(\tau_{n}(t)\right) \right|^{2 p}\right] \\
& \quad + C \sup_{i\in\mathcal{I}_{N}} \sup_{t \in [0, \mathcal{T}]}  \mathbb{E} \left[\left|Y^{i, N}\left(\tau_{n}(t)\right) \right|^{2 p} \right] \\
& \leq  
C \left(1+\left(\mathbb{E}\left[\left|Y_{0}^{i,N}\right|^{2\bar{p}}\right]\right)^{\beta}\right).
\end{aligned}
\end{equation*}
\end{proof}
As a direct consequence of
Lemmas \ref{lem:moment-bounds-of-numerical-solution}, \ref{lem:estimate-of-difference-of-numerical-solution} and \ref{lem:moment-bounds-continuous-numerical-solution}, one can get the following useful lemma on error estimates between $f,g,h$ and maps $\Gamma_l, l=1,2,3.$

\begin{lemma} \label{lem:Difference-of-coefficients}
Let Assumptions \ref{ass:inital-val-con}, \ref{ass:Coercivity-condition-Chen},  \ref{ass:poly-growth-coeff-a}-\ref{ass:holder-continuous}, \ref{ass:Gamma-control-conditions}, and Assumption \ref{ass:Coefficient-comparison-conditions-of-Gamma1-Gamma3} hold, there exists $C>0$ such that, for any $t \in [0,\mathcal{T}]$ and sufficient small $\varepsilon>0$,
\begin{align*}
\mathbb{E}\left[\left|f\left(t, Y^{i, N}(t), \rho^{Y, N}_{t}\right)-\Gamma_1\left(f\left(\tau_n(t), Y^{i, N}\left(\tau_{n}(t)\right), \rho^{Y, N}_{\tau_{n}(t)}\right), \Delta t \right)\right|^{2}\right]~
\leq C \Delta t^{\frac{2}{2+\epsilon}}~ \phi(Y_{0}^{i,N}),    
\end{align*} 
\begin{align*}
\sup_{j \in \{1,2,\cdots m\}} \mathbb{E}\left[\left|g_{j}\left(t, Y^{i, N}(t), \rho^{Y, N}_{t}\right)-\Gamma_2\left(g_{j}\left(\tau_n(t), Y^{i, N}\left(\tau_{n}(t)\right), \rho^{Y, N}_{\tau_{n}(t)}\right), \Delta t \right)\right|^{2}\right]
\leq C \Delta t^{\frac{2}{2+\epsilon}} ~\phi(Y_{0}^{i,N}),
\end{align*}
and
\begin{align*}
 \mathbb{E}\left[\int_{\mathcal{E}}\left|h\left(t, Y^{i, N}(t), \rho^{Y, N}_{t},v\right)-\Gamma_3\left(h\left(\tau_n(t), Y^{i, N}\left(\tau_n(t)\right), \rho^{Y, N}_{\tau_n(t)},v\right), \Delta t \right)\right|^{2} \varphi(dv) \right] 
 \leq C \Delta t^{\frac{2}{2+\epsilon}} ~\phi(Y_{0}^{i,N}),   
\end{align*}
where $\phi(Y_{0}^{i,N}):=1+\left(\mathbb{E}\left[\left|Y_{0}^{i,N}\right|^{2\bar{p}}\right]\right)^{\beta}$, $\beta >0$ comes from Lemma \ref{lem:moment-bounds-of-numerical-solution} and $\bar{p}$ is from Assumption \ref{ass:Coercivity-condition-Chen} satisfying
\begin{equation*}
\bar p \geq \left(\frac{\gamma (2+\varepsilon)}{\varepsilon} \vee \gamma_{1}(\gamma+1) \vee  \gamma_{2}\left(\frac{\gamma}{2}+1\right) \vee \left(\gamma_{3}+\frac{\gamma}{2}\right)\right)(1+\bar{\alpha}G)(\gamma+1)+\frac{G}{2}.
\end{equation*}
\end{lemma}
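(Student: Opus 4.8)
The plan is to decompose the quantity inside each expectation into three mismatches and estimate each separately. Consider the drift bound (the diffusion and jump bounds being analogous). Write
\begin{align*}
f\bigl(t, Y^{i,N}(t), \rho^{Y,N}_t\bigr)-\Gamma_1\Bigl(f\bigl(\tau_n(t), Y^{i,N}(\tau_n(t)), \rho^{Y,N}_{\tau_n(t)}\bigr), \Delta t\Bigr)
\end{align*}
as the sum of (i) $f\bigl(t, Y^{i,N}(t), \rho^{Y,N}_t\bigr) - f\bigl(\tau_n(t), Y^{i,N}(t), \rho^{Y,N}_t\bigr)$, the time-increment error, controlled by the H\"older-in-time Assumption \ref{ass:holder-continuous} at rate $\Delta t^{1/2}$; (ii) $f\bigl(\tau_n(t), Y^{i,N}(t), \rho^{Y,N}_t\bigr) - f\bigl(\tau_n(t), Y^{i,N}(\tau_n(t)), \rho^{Y,N}_{\tau_n(t)}\bigr)$, the spatial/measure error, controlled by the polynomial-growth Assumption \ref{ass:poly-growth-coeff-a} together with the continuity-in-time increment estimate of Lemma \ref{lem:estimate-of-difference-of-numerical-solution} and the bound $\mathbb{W}_2^2(\rho^{Y,N}_t,\rho^{Y,N}_{\tau_n(t)}) \le \frac1N\sum_i |Y^{i,N}(t)-Y^{i,N}(\tau_n(t))|^2$; and (iii) $f(\cdot) - \Gamma_1(f(\cdot),\Delta t)$ evaluated at the grid point, which is exactly the mismatch controlled by Assumption \ref{ass:Coefficient-comparison-conditions-of-Gamma1-Gamma3} at rate $\Delta t^{\delta_1}|f|^{\gamma_1}$ with $\delta_1\ge \tfrac12$.

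Next I would take second moments. For term (i) this is immediate from Assumption \ref{ass:holder-continuous}, giving $C\Delta t$. For term (ii), apply Cauchy--Schwarz to split $(1+|Y^{i,N}(t)|^\gamma+|Y^{i,N}(\tau_n(t))|^\gamma)^2|Y^{i,N}(t)-Y^{i,N}(\tau_n(t))|^2$ into a high-moment factor of $Y^{i,N}$ (bounded via Lemma \ref{lem:moment-bounds-continuous-numerical-solution}) and a factor $\mathbb{E}[|Y^{i,N}(t)-Y^{i,N}(\tau_n(t))|^{2p}]^{1/p}$ which Lemma \ref{lem:estimate-of-difference-of-numerical-solution} bounds by $C\Delta t^{1/p}\phi(Y_0^{i,N})$; choosing the conjugate exponents so that $p=(2+\varepsilon)/2$ produces the rate $\Delta t^{2/(2+\varepsilon)}$, which is why $\varepsilon$ appears and why the stated lower bound on $\bar p$ must accommodate the high-moment factor (here the $\gamma(2+\varepsilon)/\varepsilon$ term). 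The measure part of term (ii) is handled identically via \eqref{eq:W_2^p}. For term (iii), raise to the square, use Assumption \ref{ass:Coefficient-comparison-conditions-of-Gamma1-Gamma3}, then the polynomial growth \eqref{ineq:growth-condition-of-a} to bound $|f|^{2\gamma_1}$ by a power $\gamma_1(\gamma+1)$ of $|Y^{i,N}(\tau_n(t))|$ plus a Wasserstein term, and apply Lemma \ref{lem:moment-bounds-of-numerical-solution}; since $\delta_1\ge\tfrac12$ this gives at least $C\Delta t\,\phi(Y_0^{i,N})$, which is dominated by $\Delta t^{2/(2+\varepsilon)}$ for small $\varepsilon$. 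Summing the three contributions and taking suprema over $i$ and $t$ yields the claim; the diffusion estimate is the same with $g_j$, growth \eqref{eq:poly-growth-diffusion} and exponent $\gamma_2(\tfrac\gamma2+1)$, and the jump estimate uses the $L^2(\varphi)$ form of Assumption \ref{ass:Coefficient-comparison-conditions-of-Gamma1-Gamma3} together with \eqref{ineq:growth-condition-of-c} and exponent $\gamma_3+\tfrac\gamma2$.

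The main obstacle is bookkeeping the moment exponents so that every high-moment factor that appears—$|Y^{i,N}|$ raised to powers like $\gamma_1(\gamma+1)$, $\gamma_2(\tfrac\gamma2+1)$, $\gamma_3+\tfrac\gamma2$, and the $\gamma(2+\varepsilon)/\varepsilon$ coming from the Cauchy--Schwarz split in term (ii)—is genuinely controlled by Lemma \ref{lem:moment-bounds-continuous-numerical-solution}, whose admissible range of $p$ is itself constrained by $\bar p$, $G$ and $\bar\alpha$. Chasing these inequalities is what forces the precise lower bound on $\bar p$ displayed in the statement, namely $\bar p \ge \bigl(\tfrac{\gamma(2+\varepsilon)}{\varepsilon}\vee\gamma_1(\gamma+1)\vee\gamma_2(\tfrac\gamma2+1)\vee(\gamma_3+\tfrac\gamma2)\bigr)(1+\bar\alpha G)(\gamma+1)+\tfrac G2$; the factor $(1+\bar\alpha G)(\gamma+1)$ is exactly the loss incurred when converting the $p$ appearing in Lemma \ref{lem:estimate-of-difference-of-numerical-solution} (whose range already carries a $(\gamma+1)$ in the denominator) back into the $\bar p$-scale. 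Once the exponent arithmetic is set up correctly, each of the nine resulting terms is a routine application of Cauchy--Schwarz, Young's inequality, and the cited moment bounds.
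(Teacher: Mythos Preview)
Your proposal is correct and follows essentially the same approach as the paper: a three-term telescoping split handled respectively by Assumption \ref{ass:holder-continuous}, Assumption \ref{ass:poly-growth-coeff-a} plus a H\"older split with exponents $\frac{2+\varepsilon}{\varepsilon}$ and $\frac{2+\varepsilon}{2}$ (together with Lemmas \ref{lem:estimate-of-difference-of-numerical-solution}, \ref{lem:moment-bounds-continuous-numerical-solution} and the empirical-measure bound), and Assumption \ref{ass:Coefficient-comparison-conditions-of-Gamma1-Gamma3} with $\delta_l\ge\tfrac12$. The only cosmetic difference is that the paper first freezes time at $t$ and changes the state/measure, then changes time, whereas you do these in the opposite order; since Assumption \ref{ass:holder-continuous} is uniform in $(y,\rho)$ and Assumption \ref{ass:poly-growth-coeff-a} is uniform in $t$, the two orderings are equivalent.
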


\begin{proof}
By making use of Assumptions \ref{ass:poly-growth-coeff-a}, \ref{ass:holder-continuous}, \ref{ass:Coefficient-comparison-conditions-of-Gamma1-Gamma3}, the H\"older inequality and \eqref{ineq:growth-condition-of-a}, we have that for sufficient small $\varepsilon>0$,
   \begin{align}
   \label{eq:diff-drift-mean-squ-err}
        &\mathbb{E}\left[\left|f\left(t, Y^{i, N}(t), \rho^{Y, N}_{t}\right)-\Gamma_1\left(f\left(\tau_n(t), Y^{i, N}\left(\tau_n(t)\right), \rho^{Y, N}_{\tau_n(t)}\right), \Delta t \right)\right|^{2}\right] \notag \\
        \le &~C\mathbb{E} \left[\left|f\left(t, Y^{i, N}(t), \rho^{Y, N}_{t}\right)-f\left(t, Y^{i, N}\left(\tau_n(t)\right), \rho^{Y, N}_{\tau_n(t)}\right)\right|^{2}\right] \notag \\
         &~+C\mathbb{E} \left[\left|f\left(t, Y^{i, N}\left(\tau_n(t)\right), \rho^{Y, N}_{\tau_n(t)}\right)-f\left(\tau_n(t), Y^{i, N}\left(\tau_n(t)\right), \rho^{Y, N}_{\tau_n(t)}\right)\right|^{2}\right]  \notag \\
         &~+C\mathbb{E} \left[\left|f\left(\tau_n(t), Y^{i, N}\left(\tau_n(t)\right), \rho^{Y, N}_{\tau_n(t)}\right)-\Gamma_1\left(f\left(\tau_n(t), Y^{i, N}\left(\tau_n(t)\right), \rho^{Y, N}_{\tau_n(t)}\right), \Delta t \right)\right|^{2}\right]  \notag \\
         \le &~C\mathbb{E} \left[\left(1+\left|Y^{i,N}(t)\right|^{\gamma}+\left|Y^{i,N}\left(\tau_n(t)\right)\right|^{\gamma}\right)^{2}\left|Y^{i,N}(t)-Y^{i,N}\left(\tau_n(t)\right)\right|^{2}+\mathbb{W}_{2}^{2}\left(\rho^{Y,N}_{t},\rho^{Y,N}_{\tau_n(t)}\right)\right] \notag \\
         &~+C\left(t-\tau_{n}(t)\right) + C\mathbb{E} \left[\Delta t ^{2\delta_{1}} \left|f \left(\tau_{n}(t),Y^{i,N}\left(\tau_n(t)\right),\rho^{Y,N}_{\tau_n(t)}\right)\right|^{2\gamma_{1}}\right] \notag \\
         \le &~C \left[\mathbb{E}\left(1+\left|Y^{i,N}(t)\right|^{2\gamma\cdot \frac{2+\varepsilon}{\varepsilon}}+\left|Y^{i,N}\left(\tau_n(t)\right)\right|^{2\gamma\cdot \frac{2+\varepsilon}{\varepsilon}}\right)\right]^{\frac{\varepsilon}{2+\varepsilon}}\left(\mathbb{E}\left|Y^{i,N}(t)-Y^{i,N}\left(\tau_n(t)\right)\right|^{2+\varepsilon}\right)^{\frac{2}{2+\varepsilon}} \notag \\
&~+C\mathbb{E}\left[\mathbb{W}_{2}^{2}\left(\rho^{Y,N}_{t},\rho^{Y,N}_{\tau_n(t)}\right)\right]+ C\Delta t + C \Delta t ^{2\delta_{1}} \mathbb{E} \left[ 1+\left|Y^{i,N}\left(\tau_n(t)\right)\right|^{2\gamma_{1}(\gamma+1)}+\mathbb{W}_{2}^{2\gamma_{1}}\left(\rho^{Y,N}_{\tau_n(t)},\delta_{0}\right)\right].
    \end{align}
Since 
$$
\mathbb{W}_2^2\left(\rho^{Y, N}_{t}, \rho^{Y, N}_{\tau_n(t)}\right) \leq \frac{1}{N} \sum_{i=1}^N\left|Y^{i, N}(t)-Y^{i, N}\left(\tau_n(t)\right)\right|^2 \, \hbox{and} \, \, \delta_1 \geq \frac12,
$$
one can utilize Lemma \ref{lem:estimate-of-difference-of-numerical-solution}, \eqref{eq:W_2^p} and Lemma \ref{lem:moment-bounds-continuous-numerical-solution} to show
\begin{align*}
        & \mathbb{E}\left[\left|f\left(t, Y^{i, N}(t), \rho^{Y, N}_{t}\right)-\Gamma_1\left(f\left(\tau_n(t), Y^{i, N}\left(\tau_n(t)\right), \rho^{Y, N}_{\tau_n(t)}\right), \Delta t \right)\right|^{2}\right]  \\
        \le &~C \left[\left(1+\left(\mathbb{E}\left[\left|Y_{0}^{i,N}\right|^{2\bar{p}}\right]\right)^{\beta}\right)\right]^{\frac{\varepsilon}{2+\varepsilon}}\Delta t^{\frac{2}{2+\varepsilon}} +C\Delta t + C \Delta t ^{2\delta_{1}} \left[ 1+\left(\mathbb{E} \left[\left|Y_{0}^{i,N}\right|^{2\bar{p}}\right]\right)^{\beta}\right]\\
         \le &~C \Delta t^{\frac{2}{2+\varepsilon}} \left(1+\left(\mathbb{E}\left[\left|Y_{0}^{i,N}\right|^{2\bar{p}}\right]\right)^{\beta}\right) .
\end{align*}
The similar arguments, together with\eqref{ineq:Polynomial-Growth-of-b}, \eqref{eq:poly-growth-diffusion}, Lemma \ref{lem:estimate-of-difference-of-numerical-solution}, \eqref{eq:W_2^p} and Lemma \ref{lem:moment-bounds-continuous-numerical-solution} yield that for sufficient small $\varepsilon>0$ 
 \begin{align*}
        &\sup_{j \in \{1,2,\cdots m\}} \mathbb{E}\left[\left|g_{j}\left(t, Y^{i, N}(t), \rho^{Y, N}_{t}\right)-\Gamma_2\left(g_{j}\left(\tau_n(t), Y^{i, N}\left(\tau_n(t)\right), \rho^{Y, N}_{\tau_n(t)}\right), \Delta t \right)\right|^{2}\right]
       \\
         \le &~C\mathbb{E} \left[\left(1+\left|Y^{i,N}(t)\right|^{\gamma}+\left|Y^{i,N}\left(\tau_n(t)\right)\right|^{\gamma}\right) \left|Y^{i,N}(t)-Y^{i,N}\left(\tau_n(t)\right)\right|^{2}+\mathbb{W}_{2}^{2}\left(\rho^{Y,N}_{t},\rho^{Y,N}_{\tau_n(t)}\right)\right] \\
         &~+C\left(t-\tau_{n}(t)\right) + C\mathbb{E} \left[\Delta t ^{2\delta_{2}} \left|g_{j} \left(\tau_{n}(t),Y^{i,N}\left(\tau_n(t)\right),\rho^{Y,N}_{\tau_n(t)}\right)\right|^{2\gamma_{2}}\right] \\
         \le &~C \Delta t^{\frac{2}{2+\epsilon}} \left(1+\left(\mathbb{E}\left[\left|Y_{0}^{i,N}\right|^{2\bar{p}}\right]\right)^{\beta}\right),
    \end{align*}
due to $\delta_2 \geq \frac12$. The proof is concluded by performing a similar calculation for $h$ with $\delta_{3}\ge \frac{1}{2}$.
\end{proof}

With the preceding lemmas in hand, we now prove Theorem \ref{thm:con-result-particle-scheme}.

\noindent
\textit{Proof of Theorem \ref{thm:con-result-particle-scheme}.} 
According to \eqref{eq:interat-parti-system} and \eqref{eq:continuous-Modified-Euler-method}, one can use the $\rm It\hat{o}$ formula to acquire
\begin{align*}
    &\left|X^{i,N}(t)-Y^{i,N}(t)\right|^{2}   \\
    =& \int_{0}^{t}2 \bigg\langle X^{i,N}(s)-Y^{i,N}(s), f\left(s,X^{i,N}(s),\rho^{X,N}_{s}\right)-\Gamma_{1}\left(f\left(\tau_{n}(s),Y^{i,N}\left(\tau_n(s)\right),\rho^{Y,N}_{\tau_n(s)}\right),\Delta  t\right) \bigg\rangle  ds  \\
    &~+\sum_{j=1}^{m} \int_{0}^{t} 2 \bigg\langle X^{i,N}(s)-Y^{i,N}(s), g_{j}\left(s,X^{i,N}(s),\rho^{X,N}_{s}\right) -\Gamma_{2}\left(g_{j}\left(\tau_{n}(s),Y^{i,N}\left(\tau_n(s)\right),\rho^{Y,N}_{\tau_n(s)}\right),\Delta  t\right) \bigg\rangle dW_{j}^{i}(s)   \\
    &~+\int_{0}^{t} \int_{\mathcal{E}}2 \bigg\langle X^{i,N}(s)-Y^{i,N}(s), h\left(s,X^{i,N}(s),\rho^{X,N}_{s},v\right)\\
    &\qquad \qquad \qquad \qquad \qquad \qquad \qquad \qquad-\Gamma_{3}\left(h\left(\tau_{n}(s),Y^{i,N}\left(\tau_n(s)\right),\rho^{Y,N}_{\tau_n(s)},v\right),\Delta  t\right) \bigg\rangle \tilde{p}_{\varphi}^{i}(dv,ds) \\
    &~+\int_{0}^{t}\sum_{j=1}^{m}\left|g_{j}\left(s,X^{i,N}(s),\rho^{X,N}_{s}\right)-\Gamma_{2}\left(g_{j}\left(\tau_{n}(s),Y^{i,N}\left(\tau_n(s)\right),\rho^{Y,N}_{\tau_n(s)}\right),\Delta  t\right)\right|^{2} ds  \\
    &~+\int_{0}^{t}\int_{\mathcal{E}} \bigg[\left|X^{i,N}(s)-Y^{i,N}(s)+h\left(s,X^{i,N}(s),\rho^{X,N}_{s},v\right)-\Gamma_{3}\left(h\left(\tau_{n}(s),Y^{i,N}\left(\tau_n(s)\right),\rho^{Y,N}_{\tau_n(s)},v\right),\Delta  t\right)\right|^{2}   \\
    &~\qquad \qquad \quad-\left|X^{i,N}(s)-Y^{i,N}(s)\right|^{2}   -2\bigg\langle X^{i,N}(s)-Y^{i,N}(s), h\left(s,X^{i,N}(s),\rho^{X,N}_{s},v\right)\\
     &~\qquad \qquad \qquad \qquad \qquad \qquad \qquad \qquad \qquad    
     -\Gamma_{3}\left(h\left(\tau_{n}(s),Y^{i,N}\left(\tau_n(s)\right),\rho^{Y,N}_{\tau_n(s)},v\right),\Delta  t\right) \bigg\rangle \bigg] p_{\varphi}^{i}(dv,ds).
\end{align*}
Taking expectation, for $\eta >1$, and then using the Young inequality lead to
\begin{align*}
    &\mathbb{E}\left[\left|X^{i,N}(t)-Y^{i,N}(t)\right|^{2} \right]  \\
    =&~ 2\mathbb{E} \left[\int_{0}^{t} \bigg\langle X^{i,N}(s)-Y^{i,N}(s), f\left(s,X^{i,N}(s),\rho^{X,N}_{s}\right)-\Gamma_{1}\left(f\left(\tau_{n}(s),Y^{i,N}\left(\tau_{n}(s)\right),\rho^{Y,N}_{\tau_{n}(s)}\right),\Delta  t\right) \bigg\rangle ds \right] \\
    &~+\sum_{j=1}^{m}\mathbb{E} \left[\int_{0}^{t}\left|g_{j}\left(s,X^{i,N}(s),\rho^{X,N}_{s}\right)-\Gamma_{2}\left(g_{j}\left(\tau_{n}(s),Y^{i,N}\left(\tau_{n}(s)\right),\rho^{Y,N}_{\tau_{n}(s)} \right),\Delta  t\right)\right|^{2} ds \right]  \\
    &~+\mathbb{E}\left[\int_{0}^{t}\int_{\mathcal{E}} \left|h\left(s,X^{i,N}(s),\rho^{X,N}_{s},v\right)-\Gamma_{3}\left(h\left(\tau_{n}(s),Y^{i,N}\left(\tau_{n}(s)\right),\rho^{Y,N}_{\tau_{n}(s)},v\right),\Delta  t\right)\right|^{2} \varphi(dv)ds \right] \\
    \le &~ 2\mathbb{E} \int_{0}^{t} \bigg\langle X^{i,N}(s)-Y^{i,N}(s), f\left(s,X^{i,N}(s),\rho^{X,N}_{s}\right)-f\left(s,Y^{i,N}(s),\rho^{Y,N}_{s}\right) \bigg\rangle ds  \\
    &~+\eta \sum_{j=1}^{m} \mathbb{E} \left[\int_{0}^{t}\left|g_{j}\left(s,X^{i,N}(s),\rho^{X,N}_{s}\right)-g_{j}\left(s,Y^{i,N}(s),\rho^{Y,N}_{s}\right)\right|^{2} ds \right]  \\
    &~+ \eta \mathbb{E} \left[\int_{0}^{t}\int_{\mathcal{E}} \left|h\left(s,X^{i,N}(s),\rho^{X,N}_{s},v\right)-h\left(s,Y^{i,N}(s),\rho^{Y,N}_{s}\right)\right|^{2} \varphi(dv)ds \right] \\
    &~ +2\mathbb{E} \left[\int_{0}^{t} \left\langle X^{i,N}(s)-Y^{i,N}(s), f\left(s,Y^{i,N}(s),\rho^{Y,N}_{s}\right)-\Gamma_{1}\left(f\left(\tau_{n}(s),Y^{i,N}\left(\tau_{n}(s)\right),\rho^{Y,N}_{\tau_{n}(s)}\right),\Delta  t\right) \right\rangle ds \right]  \\
&~ +\underbrace{ \sum_{j=1}^{m} C \mathbb{E} \left[\int_{0}^{t}\left|g_{j}\left(s,Y^{i,N}(s),\rho^{Y,N}_{s}\right)-\Gamma_{2}\left(g_{j}\left(\tau_{n}(s),Y^{i,N}\left(\tau_{n}(s)\right),\rho^{Y,N}_{\tau_{n}(s)}\right),\Delta  t\right)\right|^{2} ds \right]}_{I_1}  \\
    &~+ \underbrace{C\mathbb{E} \left[\int_{0}^{t}\int_{\mathcal{E}} \left|h\left(s,Y^{i,N}(s),\rho^{Y,N}_{s},v\right)-\Gamma_{3}\left(h\left(\tau_{n}(s),Y^{i,N}\left(\tau_{n}(s)\right),\rho^{Y,N}_{\tau_{n}(s)},v\right),\Delta  t\right)\right|^{2} \varphi(dv)ds  \right]}_{I_2}.
\end{align*}
Applying the Cauchy–Schwarz inequality and Assumption \ref{ass:Enhanced-Coupled-mono-condi}, we arrive that
\begin{align*}
    &\mathbb{E}\left[\left|X^{i,N}(t)-Y^{i,N}(t)\right|^{2} \right]  \\
    \le &~ C\mathbb{E} \left[\int_{0}^{t} \left|X(s)^{i,N}-Y^{i,N}(s)\right|^{2} ds\right] + C\mathbb{E} \left[\int_{0}^{t} \mathbb{W}_{2}^{2}\left(\rho^{X,N}_{s},\rho^{Y,N}_{s}\right)ds\right] \\
    &~ +C\mathbb{E} \left[\int_{0}^{t} \left| f\left(s,Y^{i,N}(s),\rho^{Y,N}_{s}\right)-\Gamma_{1}\left(f\left(\tau_{n}(s),Y^{i,N}\left(\tau_{n}(s)\right),\rho^{Y,N}_{\tau_{n}(s)}\right),\Delta  t\right) \right|^{2} ds \right]  \\
 &~+ I_{1}+I_2.
\end{align*}
By using 
$$
\mathbb{W}_2^2\left(\rho^{X, N}_{s}, \rho^{Y, N}_{s}\right) \leq \frac{1}{N} \sum_{i=1}^N \left|X^{i, N}(s)-Y^{i, N}(s)\right|^2,
$$
and Lemma \ref{lem:Difference-of-coefficients}, we obtain
\begin{align*}
    \mathbb{E}\left[\left|X^{i,N}(t)-Y^{i,N}(t)\right|^{2} \right]\le &~C\mathbb{E} \left[\int_{0}^{t} \left|X^{i,N}(s)-Y^{i,N}(s)\right|^{2} ds\right] + C\mathbb{E} \left[\int_{0}^{t} \left(\frac{1}{N}\sum_{i=1}^{N}\left|X^{i,N}(s)-Y^{i,N}(s)\right|^{2}\right)ds\right] \\
    &~+C\int_{0}^{t} \Delta t ^{\frac{2}{2+\varepsilon}} \left(1+\left(\mathbb{E}\left[\left|Y_{0}^{i,N}\right|^{2\bar{p}}\right]\right)^{\beta}\right) ds     \\
    \le &~C\Delta t ^{\frac{2}{2+\varepsilon}} \left(1+\left(\mathbb{E}\left[\left|Y_{0}^{i,N}\right|^{2\bar{p}}\right]\right)^{\beta}\right) +  C \int_{0}^{t}\sup_{i\in\mathcal{I}_{N}} \mathbb{E} \left[\left|X^{i,N}(s)-Y^{i,N}(s)\right|^{2}\right] ds.
\end{align*}
Therefore, we get that for all $t \in [0,\mathcal{T}]$,
\begin{equation*}
\begin{aligned}
& \sup _{i\in \mathcal{I}_{N}} \sup _{r \in[0, t]} \mathbb{E}\left[\left|X^{i, N}(r)-Y^{i, N}(r)\right|^{2 }\right] \\
\leq & C \int_0^t \sup _{i\in \mathcal{I}_{N}} \sup _{r \in[0, s]} \mathbb{E}\left[\left|X^{i, N}(r)-Y^{i, N}(r)\right|^{2}\right] d s+C \Delta t^{\frac{2}{2+\varepsilon}} \left(1+\left(\mathbb{E}\left[\left|Y_{0}^{i,N}\right|^{2\bar{p}}\right]\right)^{\beta}\right).
\end{aligned}
\end{equation*}
Combining the preceding estimate with Gronwall's inequality completes the proof.
\qed

\section{Verification of asumptions in Section \ref{sec:examples} }
\label{verfivation-examples}

\textbf{Verification of Example \ref{ex:tanh}:} 
Based on $|\tanh(y)| \le |y|$ and $|\tanh(y)| \le 1 $, it holds that
$$
\left|\Gamma_{l}\left(F_{l}^{\mathfrak{i}}(\bm{Y}),\Delta t\right)\right| \le \left|F_{l}^{\mathfrak{i}}(\bm{Y})\right|, \quad  \left|\Gamma_{l}\left(F_{l}^{\mathfrak{i}}(\bm{Y}),\Delta t\right)\right| \le \Delta t ^{-1},~l=1,2,~\mathfrak{i}=0,1,\cdots,m,
$$
$$
\left|\Gamma_{3}\left(F_{3}^{m+1}(\bm{\bar{Y}}),\Delta t\right)\right|  \le  \left|F_{3}^{m+1}(\bm{\bar{Y}})\right| , \quad  \left|\Gamma_{3}\left(F_{3}^{m+1}(\bm{\bar{Y}}),\Delta t\right)\right| \le \Delta t ^{-1}.
$$
This implies that Assumption \ref{ass:Gamma-control-conditions} is valid with $\alpha_{1}=\alpha_{2}=\alpha_{3}=1$. Then applying the property of the hyperbolic tangent function that for any $0\le \theta \le 1$, 
 $|y-\tanh(y)|\le  |y|^{3-2\theta}$, we arrive at
$$
\left|\Gamma_{l}(F_{l}^{\mathfrak{i}}(\bm{Y}),\Delta t)-F_{l}^{\mathfrak{i}}(\bm{Y})\right|  =\Delta t^{-1} \left|\tanh\left(\Delta t F_{l}^{\mathfrak{i}}(\bm{Y})\right)-\Delta t F_{l}^{\mathfrak{i}}(\bm{Y})\right|   \le \Delta t |F_{l}^{\mathfrak{i}}(\bm{Y})|^{2},~l=1,2,~\mathfrak{i}=0,1,\cdots,m,
$$
\begin{align*}
\int_{\mathcal{E}}\left|\Gamma_{3}(F_{3}^{m+1}(\bm{\bar{Y}}),\Delta t)-F_{3}^{m+1}(\bm{\bar{Y}})\right|^{2} \varphi(dv) &~\le \int_{\mathcal{E}} \Delta t ^{-2}\left|\tanh(\Delta t F_{3}^{m+1}(\bm{\bar{Y}}))-\Delta t F_{3}^{m+1}(\bm{\bar{Y}})\right|^{2}\varphi(dv) \\
&~\le \Delta t ^{2} \int_{\mathcal{E}} |F_{3}^{m+1}(\bm{\bar{Y}})|^{4}\, \varphi(dv),
\end{align*}
where $\theta =0.5$. This implies that Assumption \ref{ass:Coefficient-comparison-conditions-of-Gamma1-Gamma3} is fulfilled  with $\delta_{1}=\delta_{2}=\delta_{3}=1$ and $\gamma_{1}=\gamma_{2}=\gamma_{3}=2$.

\textbf{Verification of Example \ref{ex:tame}:}
It is straightforward to verify that  
$$
|\Gamma_{l}\left(F_{l}^{\mathfrak{i}}(\bm{Y}),\Delta t\right)| \le \left|F_{l}^{\mathfrak{i}}(\bm{Y})\right|, \quad  |\Gamma_{l}\left(F_{l}^{\mathfrak{i}}(\bm{Y}),\Delta t \right)| \le \frac{\left|F_{l}^{\mathfrak{i}}(\bm{Y})\right|}{\Delta t\left|F_{l}^{\mathfrak{i}}\left(\bm{Y}\right)\right|} \le \Delta t^{-1}, ~l=1,2,~\mathfrak{i}=0,1,\cdots,m,
$$
$$
|\Gamma_{3}\left(F_{3}^{m+1}(\bm{\bar{Y}}),\Delta t\right)| \le \left|F_{3}^{m+1}(\bm{\bar{Y}})\right|, \quad  |\Gamma_{3}\left(F_{3}^{m+1}(\bm{\bar{Y}}),\Delta t \right)| \le \frac{\left|F_{3}^{m+1}(\bm{\bar{Y}})\right|}{\Delta t\left|F_{3}^{m+1}\left(\bm{\bar{Y}}\right)\right|} \le \Delta t^{-1},~
$$
and thus Assumption \ref{ass:Gamma-control-conditions} is  fulfilled with $\alpha_{1}=\alpha_{2}=\alpha_{3}=1$.  Moreover, by \eqref{ex:tamed-euler-scheme}, we show
$$
\left|\Gamma_{l}(F_{l}^{\mathfrak{i}}(\bm{Y}),\Delta t)-F_{l}^{\mathfrak{i}}(\bm{Y})\right|^{2}  \le  \frac{|F_{l}^{\mathfrak{i}}(\bm{Y})|^{2}\Delta t}{1+\Delta t |F_{l}^{\mathfrak{i}}(\bm{Y})|}  \le \Delta t  |F_{l}^{\mathfrak{i}}(\bm{Y})|^{2}, ~l=1,2,~\mathfrak{i}=0,1,\cdots,m,
$$
$$
\int_{\mathcal{E}}\left|\Gamma_{3}(F_{3}^{m+1}(\bm{\bar{Y}}),\Delta t)-F_{3}^{m+1}(\bm{\bar{Y}})\right|^{2} \varphi(dv) \le \int_{\mathcal{E}} \frac{|F_{3}^{m+1}(\bm{\bar{Y}})|^{4}\Delta t^{2}}{1+\Delta t |F_{3}^{m+1}(\bm{\bar{Y}})|} \varphi(dv) \le \Delta t ^{2} \int_{\mathcal{E}} |F_{3}^{m+1}(\bm{\bar{Y}})|^{4}\, \varphi(dv).
$$
i.e. Assumption \ref{ass:Coefficient-comparison-conditions-of-Gamma1-Gamma3} is satisfied with $\delta_{1}=\delta_{2}=\delta_{3}=1,$ and $   
 \gamma_{1}=\gamma_{2}=\gamma_{3}=2$. 

\section{Verification of assumptions in Section \ref{sec:Numerical-Experiments}}
\label{verfivation-numerical-examples}
\textbf{Verification of Example \ref{exam:3/2-volatility-model}: }
It can be observed in \eqref{exam:Num-3/2-Volatility}, $f(t,y,\rho)=a_{1}\left(y(a_{2}-|y|)+\int_{\mathbb{R}}y\rho(dy)\right)$, $g(t,y,\rho) = b\left(|y|^{\frac{3}{2}}+\int_{\mathbb{R}}y\rho(dy)\right)$,
$h(t,y,\rho,v) = c\left(1-y-\int_{\mathbb{R}}y\rho(dy)\right)$, where $a_{1}=6, a_{2}=2, b= -0.1, c = 1$. 
Since Assumption \ref{ass:Enhanced-Coupled-mono-condi} with $\eta=1.5$ is a reinforcement of Assumptions \ref{ass:Coupled-mono-condi}, we will only verify Assumption \ref{ass:Enhanced-Coupled-mono-condi} and Assumption \ref{ass:Coupled-mono-condi} can also be verified. By the elementary inequality, the Young inequality, the mean value theorem and $-4a_{1}+\frac{9}{2} \eta b^{2}=-23.9325<0$, we get 
\begin{align*}
&2\big\langle y-\bar{y},f(t,y,\rho)-f(t,\bar{y},\bar{\rho})\big\rangle + \eta\left|g(t,y,\rho)-g(t,\bar{y},\bar{\rho})\right|^2 + \eta \int_\mathcal{E}\left|h(t,y, \rho, v)-h(t,\bar{y}, \bar{\rho},v)\right|^2 \varphi(d v)    \\
= &~2(y-\bar{y}) \left[a_{1}a_{2}(y-\bar{y})-a_{1}(y|y|-\bar{y}|\bar{y}|)+a_{1}\int_{\mathbb{R}} y~\rho(dy)-a_{1}\int_{\mathbb{R}} \bar{y}~\bar{\rho}(d\bar{y})\right] \\
&~+\eta b^{2}\left||y|^{\frac{3}{2}}-|\bar{y}|^{\frac{3}{2}}+\int_{\mathbb{R}} y~\rho(dy)-\int_{\mathbb{R}} \bar{y}~\bar{\rho}(d\bar{y})\right|^{2} + \eta c^{2}\int_{\mathcal{E}}\left|\bar{y}-y+\int_{\mathbb{R}} \bar{y}~\bar{\rho}(d\bar{y})-\int_{\mathbb{R}} y~\rho(dy)\right|^{2} \varphi(dv)  \\
\le &~(a_{1}+2a_{1}a_{2}+2\eta c^{2}\lambda)|y-\bar{y}|^{2} +
\left(-4a_{1}+\frac{9}{2} \eta b^{2}\right)|y-\bar{y}|^{2} \int_{0}^{1}|\xi|~dr \\
&~+(a_{1}+2\eta b^{2}+2\eta c^{2}\lambda)\left[\int_{\mathbb{R}} y~\rho(dy)-\int_{\mathbb{R}} \bar{y}~\bar{\rho}(d\bar{y})\right]^{2}  \\
\le &~(a_{1}+2a_{1}a_{2}+2\eta b^{2}+2\eta c^{2}\lambda)\left(|y-\bar{y}|^{2} + \mathbb{W}_{2}^{2}(\rho,\bar{\rho})\right),
\end{align*}
where $\xi:= \bar{y}+r(y-\bar{y})$. Therefore, $C=a_{1}+2a_{1}a_{2}+2\eta b^{2}+2\eta c^{2}\lambda=36.03$ in the Assumption \ref{ass:Enhanced-Coupled-mono-condi}.
Next, we will verify the Assumption \ref{ass:Coercivity-condition-Chen}, here $\theta =1$ and $\bar{p}=297$ is sufficiently large for our Assumption. Using the elementary inequality, the Young inequality, the $\rm H\ddot{o}lder$ inequality and $-a_{1}+b^{2}(2\bar{p}-1)=-0.07<0$, we infer
\begin{align*}
      & 2\bar{p}|y|^{2\bar{p}-2}\big\langle y, f(t,y,\rho)\big\rangle + \bar{p}(2\bar{p}-1)|y|^{2\bar{p}-2}|g(t,y,\rho)|^2 +(1+(2\bar{p}-2)\theta)\int_{\mathcal{E}}|h(t,y,\rho,v)|^{2\bar{p}} \varphi(d v)\\
    =&~2\bar{p}|y|^{2\bar{p}-2}\left\langle y, a_{1}a_{2}y-a_{1}y|y|+a_{1}\int_{\mathbb{R}}y\,\rho(dy) \right\rangle + \bar{p}(2\bar{p}-1)|y|^{2\bar{p}-2}b^{2} \left||y|^{\frac{3}{2}}+\int_{\mathbb{R}}y\,\rho(dy)\right|^{2}  \\
     &+(1+(2\bar{p}-2)\theta)\int_{\mathcal{E}}c^{2\bar{p}}\left|(1-y)-\int_{\mathbb{R}}y\,\rho(dy)\right|^{2\bar{p}}\varphi(d v) \\
     \le &~ \left(2a_{1}a_{2}\bar{p}+a_{1}\bar{p}+a_{1}(\bar{p}-1)+(2\bar{p}-1)c^{2\bar{p}} 2^{4\bar{p}-2}\lambda \right)\left(1+|y|^{2\bar{p}}\right) + 2\bar{p}\left(-a_{1}+b^{2}(2\bar{p}-1)\right)|y|^{2\bar{p}+1}\\
     &~+\left(-2a_{1}\bar{p}+2b^{2}\bar{p}(2\bar{p}-1)\right)\left(\int_{\mathbb{R}}y\,\rho(dy)\right) ^{2\bar{p}} \\
     \le &~\left(2a_{1}a_{2}\bar{p}+a_{1}\bar{p}+a_{1}(\bar{p}-1)+(2\bar{p}-1)c^{2\bar{p}} 2^{4\bar{p}-2}\lambda \right)\left(1+|y|^{2\bar{p}}+\mathbb{W}_{2}^{2\bar{p}}(\rho,\delta_{0})\right).
\end{align*}
Finally, we verify the Assumption \ref{ass:poly-growth-coeff-a}
\begin{align*}
 &\left|f(t,y,\rho)-f(t,\bar{y},\bar{\rho})\right| \\
 = &~\left|a_{1}a_{2}(y-\bar{y})-a_{1}(y|y|-\bar{y}|\bar{y}|)+\int_{\mathbb{R}}y~\rho(dy)-\int_{\mathbb{R}}\bar{y}~\bar{\rho}(d\bar{y})\right| \\
 \le &~a_{1}a_{2}|y-\bar{y}| + 2a_{1}|y-\bar{y}|\int_{0}^{1} |\xi| ~dr + \mathbb{W}_{2}(\rho,\bar{\rho}) \\
 \le &~2a_{1} \left((1+|y|+|\bar{y}|)|y-\bar{y}|+\mathbb{W}_{2}(\rho,\bar{\rho})\right),
\end{align*}
with $C=2a_{1}=12$, $\gamma=1$ and here $\xi$ as above.  

\textbf{Verification of Example \ref{exam:double-well-model}:}
We can observe in \eqref{exam:num-double-well model}, $f(t,y,\rho)=d_{1}\left(y(1-y^{2})+\int_{\mathbb{R}} y ~\rho(dy)\right)$, $g(t,y,\rho)=d_{2} \left(1-y^{2}-\int_{\mathbb{R}} y ~\rho(dy)\right)$ and $h(t,y,\rho,v)=d_{3}\left(y\ln(1+y^{2})+\int_{\mathbb{R}} y ~\rho(dy)\right)$ with $d_{1}=66, d_{2}=0.18, d_{3}=0.0006$. Similarly, we first verify Assumption \ref{ass:Enhanced-Coupled-mono-condi} with $\eta=1.5$ and thus deduce Assumption \ref{ass:Coupled-mono-condi}. Using the Young inequality, the mean-value theorem, $\ln(1+y^{2}) \le |y| $ and $-6d_{1}+8\eta d_{2}^{2}+2\lambda \eta d_{3}^{2}\approx -395.5668<0$, we arrive
\begin{align*}
&2\big\langle y-\bar{y},f(t,y,\rho)-f(t,\bar{y},\bar{\rho})\big\rangle + \eta\left|g(t,y,\rho)-g(t,\bar{y},\bar{\rho})\right|^2 + \eta \int_\mathcal{E}\left|h(t,y, \rho, v)-h(t,\bar{y}, \bar{\rho},v)\right|^2 \varphi(d v)    \\
=&~2d_{1}(y-\bar{y})\left[y(1-y^{2})+\int_{\mathbb{R}}y~\rho(dy)-\bar{y}(1-\bar{y}^{2})+\int_{\mathbb{R}} \bar{y} ~\bar{\rho}(d\bar{y})\right]  \\
&~+ \eta \left|d_{2}\left(1-y^{2}-\int_{\mathbb{R}}y~\rho(dy)\right)-d_{2}\left(1-\bar{y}^{2}-\int_{\mathbb{R}}\bar{y}~\bar{\rho}(d\bar{y})\right)\right|^{2} \\
&~+\lambda \eta \left|d_{3}\left(y\ln(1+y^{2})+\int_{\mathbb{R}}y~\rho(dy)\right)-d_{3}\left(\bar{y}\ln(1+\bar{y}^{2})+\int_{\mathbb{R}}\bar{y}~\bar{\rho}(d\bar{y})\right)\right|^{2} \\
\le &~3d_{1}\left|y-\bar{y}\right|^{2} -6d_{1}\left|y-\bar{y}\right|^{2} \int_{0}^{1} \left|\xi\right|^{2}\;dr +  (d_{1}+2\eta d_{2}^{2}+2\lambda \eta d_{3}^{2})\left(\int_{\mathbb{R}}y~\rho(dy)-\int_{\mathbb{R}}\bar{y}~\bar{\rho}(d\bar{y})\right)^{2} + 8\eta d_{2}^{2} \left|y-\bar{y}\right|^{2}\int_{0}^{1} \left|\xi\right|^{2} dr \\
&~+ 2\lambda \eta d_{3}^{2} \left|y-\bar{y}\right|^{2}\int_{0}^{1} \left|\ln^{2}(1+\xi^{2})\right|^{2} dr  + 4\lambda \eta d_{3}^{2} \left|y-\bar{y}\right|^{2} \int_{0}^{1} \left(\frac{\xi^{2}}{1+\xi^{2}}\right) dr \\
\le &~ (3d_{1}+4\lambda\eta d_{3}^{2}) \left|y-\bar{y}\right|^{2} +\left(-6d_{1}+8\eta d_{2}^{2}+2\lambda \eta d_{3}^{2}\right)\left|y-\bar{y}\right|^{2}\int_{0}^{1} \left|\xi\right|^{2} dr + (d_{1}+2\eta d_{2}^{2}+ 2\lambda \eta d_{3}^{2}) ~\mathbb{W}_2^2(\rho, \bar{\rho}) \\
\le &~C\left(|y-\bar{y}|^{2}+ \mathbb{W}_2^2(\rho, \bar{\rho})\right),
\end{align*}
where $C=3d_{1}+4\lambda\eta d_{2}^{2}=198.4332$ and  $\xi=\bar{y}+r(y-\bar{y})$ as before.
Next, we will verify the Assumption \ref{ass:Coercivity-condition-Chen}. Using the Cauchy-Schwarz inequality, the Young inequality, and $|\ln(1+y^{2})|\le \bar{p}|y|^{\frac{1}{\bar{p}}}$, one can obtain
\begin{align*}
      & 2\bar{p}|y|^{2\bar{p}-2}\big\langle y, f(t,y,\rho)\big\rangle + \bar{p}(2\bar{p}-1)|y|^{2\bar{p}-2}|g(t,y,\rho)|^2 +(1+(2\bar{p}-2)\theta)\int_{\mathcal{E}}|h(t,y,\rho,v)|^{2\bar{p}} \varphi(d v)\\
      =&~2\bar{p} d_{1} |y|^{2\bar{p}-2} \left\langle y, y-y^{3} + \int_{\mathbb{R}} y\;\rho(dy) \right\rangle + \bar{p} (2\bar{p}-1) |y|^{2\bar{p}-2} \left|d_{2}\left(1-y^{2}-\int_{\mathbb{R}}y\;\rho(dy)\right)\right|^{2} \\
      &~+ (1+(2\bar{p}-2)\theta) \int_{\mathcal{E}} \left|d_{3}\left(y\ln(1+y^{2})+\int_{\mathbb{R}}y\;\rho(dy)\right)\right|^{2\bar{p}} \varphi(dv) \\
      \le &~\left(-2d_{1}\bar{p}+d_{2}^{2}(1+\theta_{1})\bar{p}(2\bar{p}-1)+(1+(2\bar{p}-2)\theta)\lambda d_{3}^{2\bar{p}}(1+\theta_{2})\bar{p}^{2\bar{p}}\right)|y|^{2\bar{p}+2} \\
      &~+\left((4\bar{p}-1)d_{1}+2d_{2}^{2}\bar{p}(2\bar{p}-1)+(2\bar{p}-1)d_{2}^{2}\left(2+\frac{1}{\theta_{1}}\right)(\bar{p}-1)\right)\left|y\right|^{2\bar{p}} \\
      &~+\left(d_{1}+d_{2}^{2}(2\bar{p}-1)\left(2+\frac{1}{\theta_{1}}\right)+\left(1+(2\bar{p}-2)\theta\right)\lambda d_{3}^{2\bar{p}}\cdot \left(1+\sum_{i=1}^{2\bar{p}-1}\frac{i}{2\bar{p}}\left(C_{2\bar{p}}^{i}\right)^{\frac{2\bar{p}}{i}}\left(\frac{2\bar{p}\theta_{2}}{(2\bar{p}-i)(2\bar{p}-1)} \right)^{-\frac{2\bar{p}-i}{i}}\right)\right) \\
      &~\qquad \cdot \left(\int_{\mathbb{R}}y\;\rho(dy)\right)^{2\bar{p}}  + 2\bar{p} (2\bar{p}-1)d_{2}^{2} \\
      \le &~C\left(1+|y|^{2\bar{p}}+\mathbb{W}_{2}^{2\bar{p}}(\rho,\delta_{0})\right).
\end{align*}
where $-2d_{1}\bar{p}+d_{2}^{2}(1+\theta_{1})\bar{p}(2\bar{p}-1)+(1+(2\bar{p}-2)\theta)\lambda d_{3}^{2\bar{p}}(1+\theta_{2})\bar{p}^{2\bar{p}}\approx -9.6736<0$, and we set $\theta=\theta_{1}=\theta_{2}=0.1144$ and $\bar{p}=1641$.
Finally, we will verify the Assumption \ref{ass:poly-growth-coeff-a}. By the elementary inequality, the mean-value theorem and the $\rm H\ddot{o}lder$ inequality, one can obtain
\begin{align*}
    &\left|f(t,y,\rho)-f(t,\bar{y},\rho)\right|  \\
    =&~ \left|d_{1}\left(y-y^{3}+\int_{\mathbb{R}}y~\rho(dy)-\bar{y}-\bar{y}^{3}-\int_{\mathbb{R}}\bar{y}~\bar{\rho}(d\bar{y})\right)\right|  \\
    \le &~|d_{1}| \left|y-\bar{y}\right| + |d_{1}|\left|y^{3}-\bar{y}^{3}\right| + |d_{1}| \left|\int_{\mathbb{R}} y~\rho(dy)-\int_{\mathbb{R}}\bar{y}~\bar{\rho}(d\bar{y})\right| \\
    \le &~|d_{1}|\left|y-\bar{y}\right| + 3|d_{1}| \left|y-\bar{y}\right| \int_{0}^{1} \left|\xi\right|^{2} dr + |d_{1} |\left|\int_{\mathbb{R}}y~\rho(dy)-\int_{\mathbb{R}}\bar{y}~\bar{\rho}(d\bar{y})\right| \\
    \le &~C\left(\left(1+|y|^{2}+|\bar{y}|^{2}\right)|y-\bar{y}|+\mathbb{W}_{2}(\rho,\bar{\rho})\right),
\end{align*}
where $\xi$ as before, and $C=6d_{1} = 396$, and $ \gamma = 2$ in Assumption \ref{ass:poly-growth-coeff-a}.

\end{document}